\documentclass[11pt,twoside]{preprint}
\usepackage[a4paper,innermargin=1.2in,outermargin=1.2in,
bottom=1.5in,marginparwidth=1in,marginparsep
=3mm]{geometry}
\usepackage{amsmath,amsthm,amssymb,enumerate}
\usepackage{hyperref}
\usepackage{mathrsfs}
\usepackage{breakurl}
\usepackage[nameinlink,capitalize]{cleveref}

\usepackage[osf]{newtxtext}
\usepackage[bigdelims,vvarbb,cmintegrals]{newtxmath}

\usepackage{xcolor}
\usepackage{mhequ}
\usepackage{comment}
\usepackage{microtype}
\usepackage{pifont}% http://ctan.org/pkg/pifont

\colorlet{darkblue}{blue!90!black}
\colorlet{darkred}{red!90!black}
\colorlet{darkgreen}{green!60!black}

\newtheorem*{assumption}{Assumption}

\newtheorem{theorem}{Theorem}[section]
\newtheorem{lemma}[theorem]{Lemma}
\newtheorem{proposition}[theorem]{Proposition}
\newtheorem{corollary}[theorem]{Corollary}

\theoremstyle{definition}
\newtheorem{definition}[theorem]{Definition}

\theoremstyle{remark}
\newtheorem{remark}[theorem]{Remark}
%\Crefname{assumption}{Assumption}{Assumptions}

\newcommand{\vn}[1]{{\vert\kern-0.23ex\vert\kern-0.23ex\vert #1 
    \vert\kern-0.23ex\vert\kern-0.23ex\vert}}
\usepackage{mathtools}

\DeclarePairedDelimiter\floor{\lfloor}{\rfloor}

\allowdisplaybreaks

\newcommand{\I}{\mathbb{I}}

\newcommand\bone{\mathbf{1}}

\newcommand\cR{\mathcal{R}}

\newcommand\cD{\mathcal{D}}

\newcommand\cA{\mathcal{A}}

\newcommand\cC{\mathcal{C}}

\newcommand\cN{\mathcal{N}}

\newcommand\cI{\mathcal{I}}

\newcommand\cQ{\mathcal{Q}}
\newcommand\cZ{\mathcal{Z}}

\newcommand\cP{\mathcal{P}}
\newcommand\cF{\mathcal{F}}

\newcommand\scC{\mathscr{C}}

\newcommand\scD{\mathscr{D}}

\newcommand{\bg}{\mathbf{g}}

\def\eps{\varepsilon}

\newcommand{\R}{{\mathbb{R}}}

\renewcommand{\P}{\mathbb{P}}

\def\Q{\mathbb{Q}}

\newcommand{\E}{\mathbb{E}}

\newcommand{\bB}{\mathbf{B}}
\newcommand{\bS}{\mathbb{S}}
\newcommand{\bbB}{\mathbb{B}}
\newcommand{\bF}{\mathbb{F}}

\newcommand{\N}{\mathbb{N}}

\def\what{\widehat}

\def\d{\partial}

\def\path{\mathrm{path}}

\def\Ito{\mathrm{It}\hat{\mathrm{o}}}

\def\({\left(}
\def\){\right)}

\setcounter{tocdepth}{2} %include sections and subsection to table of contains/pdf bookmarks
\begin{document}
\title{Path-by-path regularisation through multiplicative noise in rough, Young, and ordinary differential equations}%\institute{\textsterling  and \texteuro}
\author{Konstaninos Dareiotis and M\'at\'e Gerencs\'er}
\maketitle

\begin{abstract}
Differential equations perturbed by multiplicative fractional Brownian motions are considered. Depending on the value of the Hurst parameter $H$, the resulting equation is pathwise viewed as an ODE, YDE, or RDE. In all three regimes we show regularisation by noise phenomena by proving the strongest kind of well-posedness with irregular drift: strong existence and path-by-path uniqueness. In the Young and smooth regime $H>1/2$ the condition on the drift coefficient is optimal in the sense that it agrees with the one known for the additive case \cite{CG16, Mate}.
In the rough regime $H\in(1/3,1/2)$ we assume positive but arbitrarily small drift regularity for strong well-posedness, while for distributional drift we obtain weak existence.
\end{abstract}
%\keywords{regularisation by noise, fractional Brownian motion, rough paths, stochastic sewing}
%\subjclass{60H50%=regularisation by noise,
%60H10%=SDE
%60L20%=rough paths
%}
\tableofcontents

\section{Introduction}\label{sec:intro}
\subsection{Introduction}
The regularising effects of fractional Brownian motions as additive perturbations of differential equations with irregular coefficients, in the sense of restoring well-posedness, are very well studied, see e.g. \cite{NO1, NO2, CG16, BNP, HP, Khoa, GH20, GHM, GGNoiseless, ART21, Mate, BH22}.
These works provide far-reaching extensions of results on standard Brownian-driven stochastic differential equations,
without a large part of the Markovian toolbox (It\^o's formula, Kolmogorov equation, Zvonkin transformation, martingale problem, etc.) available.

One main shortcoming compared to the standard Brownian case is that while therein most results are valid with multiplicative noise thanks to It\^o calculus (for a small but diverse sample see \cite{Veretennikov, Zhang, CHM, FHL}), in the non-Markovian setting no well-posedness  with irregular drift and multiplicative noise has been yet established. 

The difficulty can be best illustrated in the most irregular case, which in the present paper will be the case of Hurst parameter $H\in(1/3,1/2)$.
A prototypical equation with multiplicative noise then reads as
\begin{equ}\label{eq:main}
X_t=x_0+\int_{s_0}^t b(X_s)\,ds+\int_{s_0}^t \sigma(X_s)\,dB^H_s.%=:x_0+D^X_t+S^X_t.
\end{equ}
Here $d,d_0\in\N$, $B^H$ is a $d_0$-dimensional fractional Brownian motion with Hurst parameter $H$, $s_0\in[0,1)$, $x_0\in\R^d$, and $b:\R^d\to\R^d$, $\sigma:\R^d\to\R^{d\times d_0}$.
The driving noise in \eqref{eq:main} is both a friend and an enemy: the former side provides the regularisation of the first integral, and the latter side makes the second integral ill-defined. By lifting $B^H$ to a rough path \cite{Lyons}, one can define this as a rough integral, but working pathwise seems to also go in the opposite direction of the probabilistic aspects of regularisation by noise. %In addition, typically rough (even Young) differential equations require more, not less, regularity conditions on the coefficients than their ordinary counterparts.
The goal of this work is to reconcile these two sides of the noise and show strong well-posedness of \eqref{eq:main} with irregular $b$.
We state the main result here in a somewhat informal way, the precise formulation is given in Theorems \ref{thm:main} and \ref{thm:weak} below after introducing all necessary concepts and assumptions.

\begin{theorem}\label{thm:main-informal}
If $H\in(1/3,\infty)\setminus \N$, $\alpha>(1-1/(2H))\vee 0$, $b\in \cC^\alpha$,  $\sigma\in \cC^{\floor{1/H}+1}$, and $\sigma$ is uniformly elliptic, then
strong existence and path-by-path uniqueness holds for \eqref{eq:main}.
If $H\in(1/3,1/2]$, $\alpha>1/2-1/(2H)$, $b\in \cC^\alpha$, $\sigma\in \cC^{2}$, and $\sigma$ is uniformly elliptic, then weak existence holds for \eqref{eq:main}.
\iffalse
Let $H\in(1/3,\infty)\setminus \N$, $b\in C^\alpha$, $\sigma\in C^k$, where $\alpha$ and $k$ satisfies
\begin{equ}
\alpha>
\begin{cases}
0 & \text{if } H\in(1/3,1/2];
\\
1-1/(2H) &\text{if } H\in(1/2,1);
\\
1-1/(2H) &\text{if } H\in(1,\infty)\setminus\N;
\end{cases}
\qquad\qquad
k=
\begin{cases}
3 & \text{if } H\in(1/3,1/2];
\\
2 &\text{if } H\in(1/2,1);
\\
1 &\text{if } H\in(1,\infty)\setminus\N.
\end{cases}
\end{equ}
Then strong existence and path-by-path uniqueness holds for \eqref{eq:main}.
\fi
\end{theorem}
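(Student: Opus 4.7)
The plan is a flow transformation followed by an averaging analysis. Let $\psi_t(x)$ solve the noise-only equation
\begin{equ}
d\psi_t(x)=\sigma(\psi_t(x))\,dB^H_t, \qquad \psi_{s_0}(x)=x.
\end{equ}
Under $\sigma\in\cC^{\floor{1/H}+1}$ with uniform ellipticity, classical YDE/RDE flow theory gives that $\psi_t$ is a $\cC^1$-diffeomorphism of $\R^d$ with quantitative H\"older bounds on $D\psi_t$ and $(D\psi_t)^{-1}$. By the rough/Young chain rule, $X$ solves \eqref{eq:main} if and only if $Z_t:=\psi_t^{-1}(X_t)$ solves the random ODE
\begin{equ}\label{eq:planODE}
\dot Z_t=\tilde b(t,Z_t), \qquad \tilde b(t,z):=\bigl(D\psi_t(z)\bigr)^{-1}b\bigl(\psi_t(z)\bigr),
\end{equ}
with $Z_{s_0}=x_0$. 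The driving noise has now disappeared from the equation proper, so well-posedness for \eqref{eq:main} reduces to well-posedness for \eqref{eq:planODE} followed by transport through the diffeomorphism $\psi_t$.

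The central step is to show that the averaged field $\tilde T_{s,t}(z):=\int_s^t\tilde b(r,z)\,dr$ is spatially smoother than $b$ by $1/(2H)$ derivatives. The tool is L\^e's stochastic sewing lemma applied to the germ $A_{s,t}(z):=\E\bigl[\int_s^t\tilde b(r,z)\,dr\,\big|\,\cF_s\bigr]$. Heuristically, conditionally on $\cF_s$ one has the expansion
\begin{equ}
\psi_t(z)=\psi_s(z)+D\psi_s(z)\sigma(\psi_s(z))(B^H_t-B^H_s)+O(|t-s|^{2H}),
\end{equ}
so the leading-order conditional law of $\psi_t(z)$ is a non-degenerate Gaussian (by ellipticity of $\sigma$) with fBm-type heat-kernel bounds, which produces the same regularity gain as in the additive case \cite{CG16,Mate}; quantitatively one expects $\tilde T_{s,t}$ to lie in $\cC^{\alpha+1/(2H)-\eps}$ for any $\eps>0$ with the correct $(t-s)$-scaling. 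For $\alpha>(1-1/(2H))\vee 0$ this makes $\tilde T$ spatially $\cC^{1+\kappa}$ for some $\kappa>0$; a Davie-type argument then produces a unique Lipschitz flow for \eqref{eq:planODE}, which lifts to strong existence and path-by-path uniqueness for \eqref{eq:main}.

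For the weak existence statement at $\alpha>1/2-1/(2H)$ (which permits distributional $b$), one mollifies $b$ to $b^n\in\cC^\infty$, solves the mollified equation strongly via the previous step, and applies the same sewing bound with an $L^p(\R^d)$ norm on $b$ replacing $L^\infty$. This gives uniform-in-$n$ control on $X^n$ in an appropriate H\"older/controlled-rough-path topology, hence tightness; any subsequential limit is identified as a weak solution. The main obstacle throughout is the averaging estimate for $\tilde T$: in the additive case $\sigma\equiv I$ one has $\psi_t(z)=z+B^H_t$ exactly and $\tilde T_{s,t}(z)=\int_s^t b(z+B^H_r)\,dr$ reduces to Catellier--Gubinelli, but with multiplicative noise $\psi_t$ is a nonlinear random diffeomorphism whose conditional expansion must be controlled with rough-path precision. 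Showing that its higher-order corrections do not erode the full $1/(2H)$ derivative gain, especially in the rough regime $H\in(1/3,1/2]$ and near the critical exponent $\alpha\downarrow(1-1/(2H))$, is the technical heart of the argument, and is where the $\cC^{\floor{1/H}+1}$ regularity assumption on $\sigma$ is consumed.
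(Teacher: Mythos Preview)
Your approach is genuinely different from the paper's. The paper never performs a flow transformation; it works with the equation directly. In each regime it proves a regularisation estimate of the form
\[
\Big\|\int_s^t\big(f(\tilde X_r+Z_r)-f(\tilde X_r)\big)\,dr\Big\|_{L_p(\Omega)}\lesssim\|f\|_{\cC^\alpha}\big(\cdots\big)|t-s|^{1+\eps}
\]
(Lemmas~\ref{lem:Regularisation}, \ref{lem:Regularisation-rough}, \ref{lem:Regularisation-smooth}) by stochastic sewing, where $\tilde X$ is a stopped solution and the germ approximates $\tilde X_r$ by freezing $\sigma$ at the left endpoint so that the conditional law is Gaussian. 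From this it deduces stability with respect to the drift and the initial condition (Lemmas~\ref{lem:stability}, \ref{lem:stability-rough}, \ref{lem:stability-smooth}), obtains strong existence by approximation with smooth drifts, builds a solution semiflow, and derives path-by-path uniqueness from the semiflow via a Shaposhnikov-type argument (Theorems~\ref{thm:PBP-uniq-Young}, \ref{PBP-uniq-rough}). Weak existence is likewise direct: a priori bounds on the approximating solutions in controlled-path norms, then tightness and identification of the limit (Section~\ref{sec:weak}).

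Your Doss--Sussmann reduction is a recognised alternative, but as written it is a plan, not a proof, and two points are under-sold. First, the transformed drift is a \emph{product} $(D\psi_t(z))^{-1}b(\psi_t(z))$; the Jacobian factor is itself a rough, $B^H$-driven, $z$-dependent process whose time increments are only $O(|t-s|^{H-\eps})$, and to differentiate the germ in $z$ you need $D^2\psi_t$, which a priori consumes one more derivative of $\sigma$ than the assumption $\sigma\in\cC^{\floor{1/H}+1}$ provides. Handling this factor in the $\delta A$ estimate without losing regularity is exactly the kind of obstruction that, in the paper's direct approach, forces $\alpha>0$ in the rough regime (see Remark~\ref{rem:obstacle}); your assertion of the ``full $1/(2H)$ derivative gain'' in all regimes is therefore optimistic, and you should not expect to beat the threshold stated in the theorem. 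Second, the averaging bound on $\tilde T_{s,t}$ that stochastic sewing delivers is an $L_p(\Omega)$ estimate in a fixed $z$, whereas path-by-path uniqueness requires an almost-sure bound uniform over all $z$; bridging this needs a Kolmogorov step and then a genuinely pathwise nonlinear-Young argument, which is more than ``a Davie-type argument then produces a unique Lipschitz flow''. For the distributional-drift part, the composition $b\circ\psi_t$ with $b\in\cC^\alpha$, $\alpha<0$, and a random diffeomorphism $\psi_t$ also requires a definition, which the paper sidesteps by never forming such a composition.
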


%{\color{red}Maybe some comment on studies of densities of RDEs, which people have done with multiplicative rough noise, and is kind of related. There's even a paper in Annals of Math on that topic...}

\textbf{Acknowledgements.}
MG thanks Lucio Galeati for various fruitful discussions during the development of a parallel article \cite{Lucio-Mate}, particularly for suggesting the path-by-path approach of \cite{shap}.
MG was funded by the Austrian Science Fund (FWF) Stand-Alone programme P 34992.
The article was finalised during a Research in Groups stay at ICMS, Edinburgh. The authors thank the institute for their excellent support and hospitality.

\subsection{Setup}\label{sec:setup}
\emph{Probabilistic setup}.
We fix a probability space $(\Omega,\cF,\P)$ with a complete filtration $\bF=(\cF_t)_{t\in\R}$ carrying a two-sided $d_0$-dimensional $\bF$-Wiener process $W$.
For $H\in(0,1)$ we define $B^H$ through the Mandelbrot -- van Ness representation
\begin{equ}
B^H_t=\int_{-\infty}^0 \big(|t-s|^{H-1/2}-|s|^{H-1/2}\big)\,dW_s+\int_0^t|t-s|^{H-1/2}\,dW_s.
\end{equ}     \label{eq:mandelbrot}
For $H\in(1,\infty)\setminus\N$ we set
\begin{equ}
B^H_t=\int_{0\leq r_1\leq\cdots\leq r_{\lfloor H\rfloor}\leq t}B^{H-\lfloor H\rfloor}_{r_1}\,dr_1\cdots \,dr_{\lfloor H\rfloor}.
\end{equ}
Conditional expectation given $\cF_s$ is denoted by $\E^s$.
Denoting $[0,1]_{\leq}^2:=\{(s,t)\in[0,1]^2:\,s\leq t\}$ (and similarly $[0,1]_{\leq}^3:=\{(s,u,t)\in[0,1]^3:\,s\leq u\leq t\}$), we have that for all $(s,t)\in[0,1]^2_\leq$, $B^H_t-\E^s B^H_t$ is independent of $\cF_s$ and has Gaussian distribution with mean $0$ and covariance matrix $c(H)|t-s|^{2H}\I$, where $\I$ is the identity matrix (here in $d_0\times d_0$ dimension) and $c(H)$ is a positive constant depending only $H$.
As a consequence, for any measurable bounded function $f:\R^{d_0}\to\R$, $0\leq s\leq t$, and any $\cF_s$-measurable $\R^{d_0}$-valued random vector $X$ one has a.s.
\begin{equation}\label{eq:very-basic}
\E^s f(B^H_t+X)=\cP_{c(H)|t-s|^{2H}} f(\E^s B^H_t+X),
\end{equation}
where for $r>0$, $\cP_r$ is the heat semigroup on $\R^{d_0}$ given by $\cP_r f=p_r\ast f$, with
\begin{equ}
p_r(x)=\frac{1}{(2\pi r)^{d_0/2}}\exp\Big(-\frac{|x|^2}{2r}\Big)
\end{equ}
for $x\in\R^{d_0}$.
More generally, suppose that $f:\R^d\to\R$ is a bounded measurable function, $0\leq s\leq t$, $X$ is an $\cF_s$-measurable $\R^d$-valued random vector and $Y$ is a $\cF_s$-measurable $\R^{d\times d_0}$-valued random matrix $Y$ such that $Y Y^\top$ is a.s. positive definite.
Then we have a.s.
\begin{equation}\label{eq:very-basic2}
\E^s f(Y B^H_t+X)=\cP_{c(H)|t-s|^{2H}YY^\top} f(Y\E^s B^H_t+X),
\end{equation}
where for a positive definite matrix $\Gamma\in\R^{d\times d}$, $\cP_\Gamma$ is given by $\cP_\Gamma f=p_\Gamma\ast f$, with
\begin{equ}\label{eq:HKdef}
p_\Gamma(x)=\frac{1}{(2\pi)^{d/2}(\det\Gamma)^{1/2}}\exp\Big(-\frac{x^\top\Gamma^{-1}x}{2}\Big)
\end{equ}
for $x\in\R^d$. The natural ordering on positive definite matrices is denoted by $\preceq$.
The extension of the above observations to (finite dimensional) vector-valued $f$ is straightforward by working coordinate-wise.
%A two-parameter stochastic process $f$ is called adapted if for all for all $s,t\in[0,1]^2_\leq$, $f_{s,t}$ is $\cF_t$-measurable.

\emph{Function spaces}.
For details on the path spaces and integration concepts used in the sequel we refer the reader to the monograph \cite{Friz-Hairer}.
For two bounds (\eqref{eq:sum_partition} and \eqref{eq:quadratic-controled-estimate}) which are very slightly nonstandard, we provide brief proofs in the Appendix.

For $\alpha\in(0,1)$ and a Borel subset $Q$ of $\R^n$, $n\in\N$, we denote by $\cC^\alpha(Q)$ the set of functions $f:Q\to\R$ such that
\begin{equ}
\|f\|_{\cC^\alpha(Q)}:=\sup_{x\in Q}|f(x)|+[f]_{ \cC^\alpha(Q)}:=\sup_{x\in Q}|f(x)|+\sup_{x\neq y\in Q}\frac{|f(x)-f(y)|}{|x-y|^\alpha}<\infty.
\end{equ}
With a slightly unconventional notation we set $\cC^0(Q)$ to be the set of bounded measurable functions equipped with the supremum norm (by convention, the seminorm on $\cC^0$ is set to be identically $0$).
These definitions easily extend to spaces of functions with values in an arbitrary normed space $V$. If $V$ is a finite dimensional vector space and is obvious from the context, we do not include it in the notation, while if $V=L^p(\Omega)$, we use the notation $\scC^\alpha_p$.
Similarly, $Q$ is dropped from the notation when it is understood from the context.
Finally, for $\alpha\in[1,\infty)$ we denote by $\cC^\alpha(Q)$ the set of functions whose weak derivatives of order $0,1,\ldots,\lfloor \alpha\rfloor$ all have representatives belonging to $\cC^{\alpha-\lfloor\alpha\rfloor}(Q)$.
Their norm is inherited from the norms of their derivatives of order $0,1,\ldots,\lfloor \alpha\rfloor$, while their seminorm is inherited from from the norms of their derivatives of order $1,\ldots,\lfloor \alpha\rfloor$.
Note that if $F\in \cC^1(\R^{n_0},\R^{n_1})$ and $f\in \cC^\alpha(Q,\R^{n_0})$ with $\alpha\in(0,1]$, then $F(f)\in \cC^\alpha(Q,\R^{n_1})$. Quantitatively, one has
\begin{equ}
\,[F(f)]_{\cC^\alpha}\leq [F]_{\cC^1}[f]_{\cC^\alpha},\qquad [F(f)-F(g)]_{C^\alpha}\leq[F]_{\cC^2}(1+[f]_{\cC^\alpha})\|f-g\|_{\cC^\alpha}.
\end{equ} 
For $\alpha\in(-1,0)$ we only ever use $Q=\R^d$, and we understand $\cC^\alpha(\R^d)$ to be the inhomogeneous H\"older-Besov space $B^\alpha_{\infty,\infty}$. For $Q$ as above and $\alpha\in(0,1]$ we define $\cC^{\alpha}_2(Q^2)$ as the set of continuous functions $f:Q^2\to\R$ such that
\begin{equ}
\|f\|_{\cC^\alpha_2(Q^2)}:=\sup_{x,y\in Q}|f(x,y)|+[f]_{ \cC^\alpha_2(Q^2)}:=\sup_{x,y\in Q}|f(x,y)|+\sup_{x\neq y\in Q}\frac{|f(x,y)|}{|x-y|^\alpha}<\infty.
\end{equ}

For $\alpha\in(1/3,1/2]$ we denote by $\cR^\alpha([0,1],\R^{d_0})$ the set of $\alpha$-H\"older rough paths on $[0,1]$: the subset of $\cC^\alpha([0,1],\R^{d_0})\times \cC^{2\alpha}_2([0,1]^2,\R^{d_0\times d_0})$ constrained by the nonlinear relation (\emph{Chen's identity}) postulating that any $(g,\bg)\in\cR^\alpha$ satisfies
\begin{equ}
\bg_{s,t}-\bg_{s,u}-\bg_{u,t}=(g_t-g_s)\otimes (g_s-g_u)
\end{equ}
for all $(s,u,t)\in[0,1]^3_\leq$.
Such a $(g,\bg)\in\cR^\alpha$ is also called a \emph{lift} of $g$.

Given $g\in \cC^\alpha([0,1],\R^{d_0})$ and\footnote{Most statements in \cite{Friz-Hairer} are stated with $\gamma=2\alpha$, but their generalisation is straightforward, see \cite[Exercise~4.7]{Friz-Hairer}} $\gamma\in(\alpha,2\alpha]$, we denote by $\cD^\gamma_g([0,1],\R^n)$ the set of controlled paths: functions $(f,f'):[0,1]\to \R^{n}\times\R^{n\times d_0}$ such that
\begin{equ}
\,[(f,f')]_{ \cD^\gamma_g([0,1])}:=\sup_{0\leq s<t\leq 1}\frac{|f_t-f_s-f'_s(g_t-g_s)|}{|t-s|^\gamma}+[f']_{\cC^{\gamma-\alpha}([0,1])}<\infty.
\end{equ}
As before, the target space $\R^n$ is dropped from the notation when it does not cause confusion.
For $(s,t)\in[0,1]^2_\leq$ the seminorms $[(f,f')]_{ \cD^\gamma_g([s,t])}$ are defined in the obvious way, and we further set $\|(f,f')\|_{\cD^\gamma_g}:=[(f,f')]_{\cD^\gamma_g}+\|f'\|_{\cC^0}+\|f\|_{\cC^0}$.
A function $f'$ such that $(f,f')\in \cD^\gamma_g([0,1])$ is also called a Gubinelli derivative of $f$.
Note the trivial embedding $\cC^\gamma([0,1])\subset \cD^\gamma_g([0,1])$ by setting the Gubinelli derivative to be $0$.
An embedding in an opposite direction is given by
$\cD^\gamma_g([0,1])\subset\cC^\alpha([0,1])$ by dropping the Gubinelli derivative, along with the estimate
\begin{equ}\label{eq:triv-controll-embedding}
\,[f]_{\cC^\alpha([s,t])}\leq [(f,f')]_{ \cD^\gamma_g([s,t])}+ \|f'\|_{\cC^0([s, t])}[g]_{\cC^\alpha([s,t])}\big).
\end{equ} 
Here and in all other bounds below, $(s,t)\in[0,1]^2_\leq$.
Note also the two elementary inequalities: if $\gamma' \in (\gamma, 2\alpha]$, then
\begin{equs}    \label{eq:controll-increasing}
\, [ (f,f')]_{\cD^{\gamma}_g([s, t])} \leq |t-s|^{\gamma'-\gamma}   [ (f,f')]_{\cD^{\gamma'}_g([s, t])},
\end{equs}
while  for $k\geq1$, $0 \leq u_0\leq u_1\leq\cdots\leq u_k$ one has
\begin{equs}      \label{eq:sum_partition}
\, [(f,f')]_{ \cD^\gamma_g([u_0, u_k])}   \leq 
\sum_{i=1}^k [(f,f')]_{ \cD^\gamma_g([u_{i-1},u_i])} +\sum_{i=1}^{k-1}
[ f' ]_{\cC^{\gamma-\alpha}([u_{i-1},u_i])} [ g]_{\cC^\alpha([u_i,u_k])}.
\end{equs}
%%%%%%%%%%%%
%%%%%%%%%%%%
%%%%%%%%%%%%
\iffalse
Self note: 
\begin{equs}
 f_{u_k}-f_{u_0}-f'_{u_0} g_{u_0, u_k}= \sum_{i=1}^k( f_{u_{i-1} , u_i} -f'_{u_{i-1}}g_{u_{i-1}, u_i}) -f'_{u_0} g_{u_0, u_k} + \sum_{i=1}^kf'_{u_{i-1}}g_{u_{i-1}, u_i}.
\end{equs}
Then, we have 
\begin{equs}
 -f'_{u_0} g_{u_0, u_k} + \sum_{i=1}^kf'_{u_{i-1}}g_{u_{i-1}, u_i} 
& =  -f'_{u_0} g_{u_0, u_k} + \sum_{i=1}^kf'_{u_{i-1}}g_{u_i}- \sum_{i=1}^{k-1} f'_{u_{i}}g_{u_i} - f'_{u_{0}}g_{u_0}
\\
& =  -f'_{u_0} g_{u_0, u_k} + f'_{u_{k-1}}g_{u_k}  -  \sum_{i=1}^{k-1} g_{u_i} f'_{u_{i-1}, u_i}-  f'_{u_{0}}g_{u_0}
\\
& =  f'_{u_0, u_{k-1}} g_{u_k}  -  \sum_{i=1}^{k-1} g_{u_i} f'_{u_{i-1}, u_i}
\\
& =  \sum_{i=1}^{k-1} g_{u_i, u_k} f'_{u_{i-1}, u_i}. 
\end{equs}
\fi
%%%%%%%%%%%%%%%
%%%%%%%%%%%%%%%
%%%%%%%%%%%%%%%
If $(f,f')\in \cD^\gamma_g([0,1],\R^{n_0})$ and $\cF\in \cC^2(\R^{n_0},\R^{n_1})$, then $(F(f),\nabla F(f)f')\in \cD^\gamma_g([0,1],\R^{n_1})$.   Moreover, %for all $(s, t) \in [0, 1]^2_{\leq}$ 
one has a bound on the growth of this composition operation, which we formulate in a somewhat nonstandard form (compare with e.g. \cite[Lemma~7.3]{Friz-Hairer}), but the proof is similarly elementary (see the Apendix). If $\beta\in[0,1]$ satisfies $\beta\alpha+\alpha\geq \gamma$, then
\begin{equs}    
\, \big[ &(F(f),\nabla F(f)f')\big]_{\cD^\gamma_g([s, t])} \leq 
 \|F\|_{\cC^1}(1+\|f'\|_{\cC^0([s, t])})[(f,f')]_{\cD^\gamma_g([s, t]) }\label{eq:quadratic-controled-estimate}
\\
&+\|F\|_{\cC^2}\|f'\|_{\cC^0([s, t])}\big(1+[g]_{\cC^\alpha([s, t])}\big)\Big([(f,f')]_{\cD^\gamma_g([s, t]) }^\beta +[g]_{\cC^\alpha([s, t])}^{\beta}\|f'\|_{\cC^0([s,t])}^\beta \Big)     .      
%\\
%&\leq 2 \| F\|_{\cC^2} \Big( [(f,f')]_{\cD^\gamma_g([s, t]) }+[(f,f')]^2_{\cD^\gamma_g([s, t])}\big(1+ [g]_{\cC^\alpha([s, t])} \big)^2 \Big),
\end{equs}
%\\
%\,\big[ &(F(f)-F(\bar f),\nabla F(f)f'-\nabla F(\bar f)\bar f')\big]_{\cD^\gamma_g([s, t])}
%\\
%&\leq something terrible
%\end{equs}
The reason for this convoluted form is that when $(f,f')$ is a rough integral (see below), then $\|f'\|_{\cC^0}$ can be controlled easily via bounds on the coefficients, so the quadratic bound \eqref{eq:quadratic-controled-estimate} becomes practically linear.
Concerning the 
continuity of the composition it suffices to recall \cite[Theorem~7.6]{Friz-Hairer}: if for some $K<\infty$, one has $\|(f,f')\|_{\cD^\gamma_g([s, t])}$, $\|(\bar f,\bar f')\|_{\cD^\gamma_g([s, t])}\leq K$, then the bound
\begin{equs}
\big[&\big(F(f)-F(\bar f),\nabla F(f)f'-\nabla F(\bar f)\bar f'\big)\big]_{\cD^\gamma_g([s, t])}
\\
&\leq N\big(|f_s-\bar f_s|+|f'_s-\bar f'_s|+[(f-\bar f,f'-\bar f')]_{\cD^\gamma_g([s, t])}\big),\label{eq:controlled-comp-stability}
\end{equs}
holds with a constant $N=N(K,\|F\|_{\cC^3},\|g\|_{\cC^\alpha([s,t])})$.

\emph{Integration}.
Let $n\in\N$ and $\alpha,\beta\in(0,1)$ such that $\alpha+\beta>1$.
Then for $f\in \cC^\beta([0,1],\R^n)$ and $g\in \cC^\alpha([0,1],\R^n)$ the \emph{Young integral}
\begin{equ}
h_t=\int_0^t f_s\,dg_s
\end{equ}
is well-defined as the limit as $m\to\infty$ of the Riemann sums
\begin{equ}
\sum_{i=0}^{m} f_{t_i^m}\cdot(g_{t_{i+1}^m\wedge t}-g_{t_i^m\wedge t}),
\end{equ}
where $(\pi_m)_{m\in\N}=\big(\{0=t_0^m\leq \cdots\leq t_{m+1}^m=1\}\big)_{m\in\N}$ is any sequence of partitions such that $|\pi_m|:=\max_{i}|t_{i+1}^m-t_i^m|\to 0$.
For some $N$ depending only on $\alpha+\beta$ the Young integral satisfies the estimate
\begin{equ}\label{eq:Young remainder}
|h_t-h_s-f_s(g_t-g_s)|\leq N |t-s|^{\alpha+\beta}[f]_{\cC^\beta([s,t])}[g]_{\cC^\alpha([s,t])}.
\end{equ}
for all $(s,t)\in[0,1]^2_\leq$.
%It is easy to see that $N$ is a nonincreasing function of $\alpha+\beta$.
This of course implies that $h\in \cC^\alpha([0,1],\R)$ as well as the bound
\begin{equ}\label{eq:Young continuity}
\,[h]_{\cC^\alpha([s,t])}\leq N \|f\|_{\cC^\beta([s,t])}[g]_{\cC^\alpha([s,t])}.
\end{equ}
For $s=0$ on the left-hand side of \eqref{eq:Young continuity} one may of course take norm in place of seminorm.
For $n_0>1$, the extension to $\R^{n_0\times n}$-valued integrands, resulting in $\R^{n_0}$-valued  integrals, is straightforward.
%and since $h_0=0$, 
%\begin{equ}\label{eq:Young continuity2}
%|h|_{C^\alpha([0,t])}\lesssim |f|_{C^\beta([0,t])}|g|_{\dot C^\alpha([0,t])}.\mate{might be too triv to spell out...}
%\end{equ}

Let $\alpha\in(1/3,1/2]$ and $\gamma\in(\alpha,2\alpha]$ be such that $\alpha+\gamma>1$.
Let $(g,\bg)\in \cR^\alpha([0,1],\R^n)$ and $(f,f')\in \cD^\gamma_g([0,1],\R^n)$.
Then the \emph{rough integral}
\begin{equ}
h_t=\int_0^t (f,f')_s\,d(g,\bg)_s
\end{equ}
is well-defined as the limit as $m\to\infty$ of the Riemann sums
\begin{equ}
\sum_{i=1}^m \Big(f_{t_i^m}\cdot(g_{t_{i+1}^m\wedge t}-g_{t_i^m\wedge t})+f'_{t_i^m}\cdot\bg_{t_i^m\wedge t,t_{i+1}^m\wedge t}\Big),
\end{equ}
where $(\pi_m)_{m\in\N}=\big(\{0=t_0^m\leq \cdots\leq t_{m+1}^m=1\}\big)_{m\in\N}$ is any sequence of partitions such that $|\pi_m|\to 0$.
The product of the matrices $f',\bg\in\R^{n\times n}$ is understood to be the Frobenius product.
When both $f'$ and $\bg$ are clear from the context, we will usually write by some abuse of notation
\begin{equ}
h_t=\int_0^t f_s\,dg_s.
\end{equ}
Note that in the trivial case $(f,0)$ the rough and Young integrals coincide.
The rough analogue of the remainder estimate \eqref{eq:Young remainder}
reads as
\begin{equ}\label{eq:rough remainder}
|h_t-h_s-f_s(g_t-g_s)-f'_s\cdot\bg_{s,t}|\leq N |t-s|^{\alpha+\gamma}[(f,f')]_{\cD^{\gamma}_g([s,t])}[(g,\bg)]_{\cR^\alpha([s,t])}
\end{equ}
for all $(s,t)\in[0,1]_\leq^2$,
where $N$ is a constant depending only on $\alpha+\gamma$.
This of course implies that $(h,f)\in \cD^{2\alpha}_g([0,1],\R)$ as well as the bound
\begin{equ}\label{eq:rough continuity}
\,[(h,f)]_{ \cD^{2\alpha}_g([s,t])}\leq N [(f,f')]_{\cD^{\gamma}_g([s,t])}[(g,\bg)]_{\cR^\alpha([s,t])}.
\end{equ}
As before, for $n_0>1$, the extension to $\R^{n_0\times n}$-valued integrands, resulting in $\R^{n_0}$-valued  integrals, is straightforward.

%\begin{remark}\label{rem:rough and young}
%It is not clear whether it is conceivable to have the construct the drift part of \eqref{eq:main},
%\begin{equ}
%D^X_t=\int_0^t b(X_s)\,ds,
%\end{equ}
%as a controlled process. If not, then we will need enough regularity to make sense of 
%\begin{equ}
%\int_0^t D^X_s\,dB^H_s
%\end{equ}
%as a Young integral.
%In fact not only we want to make sense of it, but we want it to be \emph{``Lipschitz in $X$''} in an appropriate sense. The time regularity of $D^X$, if we want it to be Lipschitz, is $1+H(\alpha-1)$. So for the Young integral to be well defined, we need $H+1+H(\alpha-1)>1$, which gives exactly $\alpha>0$.
%\end{remark}
%Vector-valued integrals are defined coordinate-wise.
%\mateText{I guess discussing classical integration is overkill here so I removed it}

\emph{Conventions}. Whenever for some $s_0\in[0,1)$, a function $X$ is given on the interval $[s_0,1]$, it is understood to be extended as a constant function $f_r\equiv f_{s_0}$ for $r\in[0,s_0]$. On finite dimensional vector spaces we always use the Euclidean norm\footnote{This is not always the optimal choice, especially for matrices. Using their operator norm would allow some intermediate estimates to be dimension-independent, but we do not pursue these subtleties.}.
When a statement contains an estimate with a constant $N$ depending on a certain set of parameters, in the proof we do not carry the constants from line to line. Rather, we write $a\lesssim b$ to denote the existence of a constant $N'$ depending on the same set of parameters such that $a\leq N' b$.

\subsection{Main results}

Our assumptions in the three regimes are as follows.

\begin{assumption}[$\mathbf{A}_{\mathrm{smooth}}$]
For some $M\geq0$, $\lambda>0$, and $\alpha>1-1/(2H)$, one has $\|b\|_{\cC^\alpha}\leq M$, $\|\sigma\|_{\cC^1}\leq M$, and $\sigma\sigma^\top\succeq\lambda\I$.
\end{assumption}

\begin{assumption}[$\mathbf{A}_{\mathrm{Young}}$]
For some $M\geq0$, $\lambda>0$, and $\alpha>1-1/(2H)$, one has $\|b\|_{\cC^\alpha}\leq M$, $\|\sigma\|_{\cC^2}\leq M$, and $\sigma\sigma^\top\succeq\lambda\I$.
\end{assumption}
\begin{assumption}[$\mathbf{A}_{\mathrm{rough}}$]
(i) For some $M\geq0$, $\lambda>0$, and $\alpha>0$, one has $\|b\|_{\cC^\alpha}\leq M$, $\|\sigma\|_{\cC^3}\leq M$, and $\sigma\sigma^\top\succeq\lambda\I$.

(ii) For all $(s,t)\in[0,1]_\leq^2$, $\bB_{s,t}^H$ is $\cF_t$-measurable and the event $\Omega_H=\{\forall \eps>0:\,(B^H,\bB^H)\in \cR^{H-\eps}([0,1])\}$ has probability $1$.
\end{assumption}

For $H>1/2$ we shall also use the event $\Omega_H=\{\forall\eps>0:\,B^H\in \cC^{H-\eps}([0,1])\}$, but in this case one has $\P(\Omega_H)=1$ trivially.
%, and therefore \cref{asn:noise} is always satisfied.
For $H\in(1/3,1/2]$ an example of a rough path lift of $B^H$ that satisfies $\mathbf{A}_{\mathrm{rough}}$ (ii) is the Gaussian rough path constructed in e.g. \cite[Thm.~10.4]{Friz-Hairer}.
For $H=1/2$ this construction gives the Stratonovich lift, another example satisfying $\mathbf{A}_{\mathrm{rough}}$ (ii) would be the It\^o lift.

With these preparations we can define what we mean by a solution. Note that the conditions below in particular imply that all integrals are pathwise well-defined.
\begin{definition}\label{def:soln}
\begin{enumerate}[(i)]
\item Let $H\in(1,\infty)\setminus \N$ and
let $\mathbf{A}_{\mathrm{smooth}}$ hold.
Given $\omega\in\Omega_H$, $s_0\in[0,1)$, and $x_0\in\R^d$, we say that a function $Y:[s_0,1]\to\R^d$ is a \emph{solution} to \eqref{eq:main} if
$Y\in \cC^\beta([s_0,1])$ for some $\beta>0$ and it satisfies the equality \eqref{eq:main} for all $t\in[s_0,1]$.

\item Let $H\in(1/2,1)$ and
let $\mathbf{A}_{\mathrm{Young}}$ hold.
Given $\omega\in\Omega_H$, $s_0\in[0,1)$, and $x_0\in\R^d$, we say that a function $Y:[s_0,1]\to\R^d$ is a \emph{solution} to \eqref{eq:main} if
$Y\in \cC^\beta([s_0,1])$ for some $\beta>1-H$ and it satisfies the equality \eqref{eq:main} for all $t\in[s_0,1]$.

\item Let $H\in(1/3,1/2]$ and
let $\mathbf{A}_{\mathrm{rough}}$ hold.
Given $\omega\in\Omega_H$, $s_0\in[0,1)$, and $x_0\in\R^d$, we say that a function $Y:[s_0,1]\to\R^d$ is a \emph{solution} to \eqref{eq:main} if
$\big(Y,\sigma(Y)\big)\in \cD^\gamma_{B^H(\omega)}([s_0,1])$ for some $\gamma>1-H$ and it satisfies the equality \eqref{eq:main} for all $t\in[s_0,1]$.

\item In all three cases, given $s_0\in[0,1)$ and $x_0\in\R^d$, we say that a stochastic process $(X_t)_{t\in[s_0,1]}$ is a \emph{strong solution} to \eqref{eq:main} if it is adapted to the filtration $\bF$ and for almost all $\omega\in\Omega_H$, the function $X(\omega):[s_0,1]\to\R^d$ is a solution to \eqref{eq:main}.
\end{enumerate}

\end{definition}

The main results of the article on the strong well-posedness of \eqref{eq:main} are summarised in the following theorem.
\begin{theorem}\label{thm:main}
Assume  $H\in(1,\infty)\setminus \N$ and $\mathbf{A}_{\mathrm{smooth}}$, $H\in(1/2,1)$ and $\mathbf{A}_{\mathrm{Young}}$, or  $H\in(1/3,1/2]$ and $\mathbf{A}_{\mathrm{rough}}$.
Then for any $s_0\in[0,1)$, $x_0\in\R^d$ there exists a strong solution to \eqref{eq:main}.
Moreover,
there exists an event $\what\Omega\subset\Omega_H$ of full probability such that for any $\omega\in\what\Omega$, $s_0\in[0,1)$, $x_0\in\R^d$, any two solutions to \eqref{eq:main} coincide.
\end{theorem}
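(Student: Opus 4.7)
The plan is to handle the $\sigma$-integral pathwise via the Young/rough machinery recalled in Section~\ref{sec:setup}, and to handle the low-regularity drift $\int b(X_s)\,ds$ via a conditional stochastic sewing argument exploiting the Gaussian structure of $B^H$. The observation making the multiplicative case tractable is that by \eqref{eq:very-basic2} the conditional law of $X_r$ given $\cF_s$ is, for $r$ slightly above $s$, approximately Gaussian with covariance $c(H)(r-s)^{2H}\sigma(X_s)\sigma(X_s)^\top$; by uniform ellipticity this dominates $\lambda c(H)(r-s)^{2H}\I$, so conditional expectations of $b(X_r)$ benefit from essentially the same heat-kernel smoothing that drives the additive-noise theory, at the price of having to Taylor-expand $\sigma(X)$ against $B^H$ up to order $\lfloor 1/H\rfloor$. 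This is why $\sigma\in\cC^{\lfloor 1/H\rfloor+1}$ is the natural hypothesis, and it is also the heuristic reason why the thresholds on $\alpha$ match those of the additive-noise results \cite{CG16, Mate}.

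\textbf{Strong existence.} Mollify $b$ to a smooth $b^n\to b$ and solve \eqref{eq:main} classically, obtaining $X^n$ whose pathwise existence is standard in each regime. The central task is a uniform-in-$n$ bound on $\|(X^n,\sigma(X^n))\|_{\cD^\gamma_{B^H}([s_0,1])}$ on a full-probability event; via \eqref{eq:rough continuity}, \eqref{eq:quadratic-controled-estimate} (and the Young analogue \eqref{eq:Young continuity}) this reduces to controlling $K^n_{s,t}:=\int_s^t b^n(X^n_r)\,dr$. I would apply the stochastic sewing lemma to the germ $A^n_{s,t}:=\E^s K^n_{s,t}$: the conditional-Gaussian representation \eqref{eq:very-basic2} gives, up to a strictly-better-scaling error,
\begin{equ}
A^n_{s,t}\approx\int_s^t \cP_{c(H)(r-s)^{2H}\sigma(X^n_s)\sigma(X^n_s)^\top}b^n\bigl(\Xi^n_{s,r}\bigr)\,dr,
\end{equ}
with $\Xi^n_{s,r}$ a suitable $\cF_s$-measurable proxy; the resulting heat-kernel smoothing of $b^n\in\cC^\alpha$ yields a scaling exponent strictly greater than $1$ precisely under the stated condition on $\alpha$. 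A conditional second-moment bound for $K^n_{s,t}-A^n_{s,t}$ completes the sewing input. Compactness of $(X^n,\sigma(X^n))$ in the controlled-path sense, together with the continuity \eqref{eq:controlled-comp-stability} of rough integration, extracts a limit which solves \eqref{eq:main}; adaptedness is inherited from the $X^n$.

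\textbf{Path-by-path uniqueness.} For two solutions $X,\bar X$ on the same $\omega$ in an event $\what\Omega\subset\Omega_H$ of full probability, set $Z_r=X_r-\bar X_r$. The $\sigma$-integral difference is controlled via \eqref{eq:controlled-comp-stability}, producing Young/rough bounds linear in the controlled norm of $Z$ and therefore absorbable by Gronwall. Following the Shaposhnikov path-by-path strategy suggested in \cite{shap}, the drift difference is bounded via a stochastic sewing estimate that holds \emph{uniformly} over a sufficiently rich class of candidate pairs: one establishes
\begin{equ}
\Bigl|\int_s^t\bigl(b(X_r)-b(\bar X_r)\bigr)\,dr\Bigr|\leq C(\omega)|t-s|^{1/2+H\alpha}\|X-\bar X\|_{\cC^0([s,t])}^{\kappa}
\end{equ}
for some $\kappa\in(0,1)$ on an event of full probability independent of the particular solutions chosen. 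A nonlinear Gronwall/iteration argument then forces $Z\equiv 0$, which gives path-by-path uniqueness and simultaneously upgrades the existence argument to strong existence.

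\textbf{Main obstacle.} The hardest part is the uniformity of the sewing bound for the drift difference in the uniqueness step: one must construct a sewing germ indexed jointly by two potential solutions whose conditional-Gaussian representation still delivers a subcritical exponent while remaining compatible with the controlled-path topology used for $\sigma(X)-\sigma(\bar X)$. In the rough regime $H\in(1/3,1/2]$ this is delicate, since $\sigma\in\cC^3$ is only just enough to Taylor expand $\sigma(X)$ to order two against $(B^H,\bB^H)$; keeping all Taylor remainders and sewing errors simultaneously compatible with both the rough integration machinery and the heat-kernel smoothing from \eqref{eq:very-basic2} is where the bulk of the technical work should concentrate.
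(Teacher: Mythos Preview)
Your heuristic about the conditional Gaussian structure and heat-kernel smoothing is exactly right, but the architecture has two genuine gaps that the paper's strategy is specifically designed to avoid.

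For existence: uniform a priori bounds plus compactness extract only a subsequential limit, which yields weak existence; adaptedness is \emph{not} inherited along subsequences, so ``adaptedness is inherited from the $X^n$'' is unjustified. The paper instead first proves a quantitative \emph{stability} estimate (Lemmas~\ref{lem:stability}, \ref{lem:stability-rough}, \ref{lem:stability-smooth}): if $X^n,X^m$ are strong solutions with drifts $b^n,b^m$, then $\|X^n_{\cdot\wedge\tau_K}-X^m_{\cdot\wedge\tau_K}\|_{\scC^{1/2}_p}$ (resp.\ in the controlled-path norm) is controlled by $\|b^n-b^m\|_{\cC^0}$. Hence the whole sequence is Cauchy in $L_p$, the limit is automatically adapted, and one gets strong existence together with a solution \emph{semiflow} $\what X^{s_0,x_0}$ that is H\"older in $x_0$ and satisfies the flow property on a fixed full-probability event (Theorems~\ref{thm:exist-Young}, \ref{thm:exist-rough}).

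For path-by-path uniqueness your sketch cannot work as written. Stochastic sewing requires the germ and the process to be adapted so that $\E^s$ is meaningful, but a path-by-path solution $Y$ is merely a function for a fixed $\omega$, not an adapted process; there is no mechanism to obtain a drift-difference bound valid \emph{uniformly over all solution pairs} on an $\omega$-event chosen in advance. The paper sidesteps this entirely: no sewing is applied to the unknown $Y$. Instead, having built the semiflow $\what X$ from adapted solutions, one fixes $\omega\in\what\Omega$, sets $f(s)=\what X^{s,Y_s}_t$, and uses the semiflow's flow property and H\"older continuity in the initial condition (property (iii)) to bound $|f(r)-f(u)|\lesssim|Y_r-\what X^{u,Y_u}_r|^\theta$. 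The right-hand side is then shown to be $\lesssim|r-u|^{(1+\alpha)\wedge(\beta+\bar H)}$ by purely \emph{deterministic} estimates (boundedness and H\"older continuity of $b$, Young/rough remainder bounds for the $\sigma$-integral), so $f$ is constant and $Y_t=\what X^{s_0,x_0}_t$. This is the actual content of the Shaposhnikov argument you cite; the ``uniform sewing over solution pairs'' reading of it is not what is done and would not go through.
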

 The various cases are stated and proved in Theorems \ref{thm:exist-Young}, \ref{thm:PBP-uniq-Young}, \ref{thm:exist-rough}, \ref{PBP-uniq-rough}, and \ref{thm:everythin-smooth}.
The overall strategy consists of the following steps. First we establish strong uniqueness and a little bit more: stability with respect to certain data of the equation. The latter can then be used to establish strong existence and a little bit more: existence of a solution semiflow. Finally, the latter can be used to prove path-by-path uniqueness.
To avoid repeating arguments, we give all details in the Young case and refer back in the other two cases for some of the steps that do not need any significant change.

\begin{remark}
In the special case $H=1/2$ and $\bB^{1/2}=\bB^{\Ito}$ our notion of strong solution is somewhat different from the standard notion for It\^o stochastic differential equations. Definition \ref{def:soln} requires more from the solution $Y$: it needs to be controlled on an event $\Omega_Y\subset\Omega_{1/2}$ of full probability, but it also gives more: the stochastic integral is defined on the same event $\Omega_Y$, instead of an unspecified event of full probability.
\end{remark}

\begin{remark}
Also in the $H=1/2$ case, with the help of It\^o calculus the condition $b\in \cC^\alpha$, $\alpha>0$, can be relaxed to for example bounded measurable $b$ to get either strong uniqueness with multiplicative noise \cite{Veretennikov} or path-by-path uniqueness with additive noise \cite{Davie}.
Path-by-path uniqueness with multiplicative noise is a challenging endpoint case of our analysis. In this direction critical variants of the stochastic sewing lemma \cite{FHL} could be useful.
\end{remark}

\begin{remark}
One would expect to be able to lower the regularity requirements on $\sigma$ in $\mathbf{A}_{\mathrm{Young}}$ and $\mathbf{A}_{\mathrm{rough}}$ to $1/H+\eps$ by standard arguments, which for sake of easing presentation we do not pursue.
A more interesting question regarding $\sigma$ is how to further reduce its regularity by stochastic arguments, this is studied in the very recent work \cite{Toyomu-Nicolas}.
\end{remark}

\begin{remark}
Continuing the discussion on $\sigma$, weakening the uniform ellipticity condition to hypoellipticity condition seems highly nontrivial but not impossible, given that densities of such equations (with regular drift) are well-studied (see among others \cite{HN07, BH07, CF10, CHLT15}). In the Brownian case this has been recently achieved in \cite{CHM}.
Note however that the $H>1$ case can be seen as a special form of degenerate equation: we can equivalently rewrite \eqref{eq:main} as
\begin{equs}[eq:system]
dY^1_t&=dB^{H-\floor{H}}_t\\
dY^2_t&=Y^1_t\,dt\\
&\vdots\\
dX_t&=\big(b(X_t)+\sigma(X_t)Y^{\floor H}_t\big)\,dt.
\end{equs}
\end{remark}

The reader familiar with the regularisation by noise literature and in particular with \cite{CG16} notices that in the regime $H\in(1/3,1/2)$ we do not quite recover the condition $\alpha>1-1/(2H)$ that is known to be sufficient for strong well-posedness in the additive noise case.
The naive guess for this obstacle
would be that for $\alpha<0$ the drift coefficient $b$ is in general a distribution and the meaning of $b(X_s)$ is unclear.
However, this is not the our main obstacle and in a weaker sense we are able to handle distributional drift in  the rough case. This is the content of Theorem \ref{thm:weak}, which is proved in Section \ref{sec:weak}.
The more concrete reason for the threshold $\alpha>0$ arising in the rough case of Theorem \ref{thm:main} is discussed in Remark \ref{rem:obstacle}.
\begin{theorem}\label{thm:weak}
Assume $H\in(1/3,1/2]$, $\alpha>1/2-1/(2H)$, $b\in \cC^\alpha$, $\sigma\in \cC^2$, $s_0\in[0,1)$, and $x_0\in\R^d$.
Then there exists a probability space $(\bar\Omega,\bar\cF,\bar \P)$ with a complete filtration $\bar\bF=(\bar\cF_t)_{t\in\R}$ carrying a two-sided $d_0$-dimensional $\bar\bF$-Wiener process $\bar W$, such that $\bar B^H$ defined by the Mandelbrot -- van Ness representation based on $\bar W$ has a rough path lift $(\bar B^H,\bar \bB^H)$ satisfying $\mathbf{A}_{\mathrm{rough}}$ (ii), and such that there exists an $\bar\bF$-adapted stochastic processes $(\bar X_t)_{t\in[s_0,1]}$ and $(\bar D_t)_{t\in[s_0,1]}$ such that 
\begin{itemize}
\item $\bar \P$-almost surely $(\bar X,\sigma(\bar X))\in \cD^\gamma_{B^H}$ for some $\gamma>1-H$;
\item $\bar \P$-almost surely for all $t\in[s_0,1]$ it holds that
\begin{equ}
\bar X_t=x_0+\bar D_t+\int_{s_0}^t\sigma(\bar X_s)\,d\bar B^H_s,
\end{equ}
with the integral understood in a rough sense;
\item For any sequence $(b^n)_{n\in \N}$ of smooth functions converging to $b$ in $\cC^\alpha$,
one has $\bar \P$-almost surely for all $t\in[s_0,1]$
\begin{equ}
\bar D_t=\lim_{n\to\infty}\int_{s_0}^tb^n(\bar X_s)\,ds.
\end{equ}
\end{itemize}
\end{theorem}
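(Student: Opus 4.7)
The plan is a stochastic compactness argument built around a uniform bound for the drift term coming from L\^e's stochastic sewing lemma. First I would mollify the drift, solve the smoothed equations, and then pass to the limit via Skorokhod representation, using sewing once more on the limiting probability space to identify $\bar D$.

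\emph{Setup.} Let $b^n:=b*\rho_{1/n}\in C^\infty_b$ for a standard mollifier $\rho$; then $b^n\to b$ in $\cC^{\alpha'}$ for every $\alpha'<\alpha$, and I fix one such $\alpha'>1/2-1/(2H)$. For each $n$, classical rough-path theory (or Theorem~\ref{thm:main} after smoothing $\sigma$ as well) yields a strong solution $X^n$ on $(\Omega,\cF,\P)$ to $dX^n_t=b^n(X^n_t)\,dt+\sigma(X^n_t)\,dB^H_t$ with $X^n_{s_0}=x_0$ and $(X^n,\sigma(X^n))\in\cD^{2H-\eps}_{B^H}$; write $D^n_t:=\int_{s_0}^tb^n(X^n_s)\,ds$ and $I^n_t:=\int_{s_0}^t\sigma(X^n_s)\,dB^H_s$, so $X^n=x_0+D^n+I^n$.

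\emph{Uniform sewing bound.} The crux is the uniform-in-$n$ estimate
\[
\|D^n_t-D^n_s\|_{L^m(\P)}\lesssim(t-s)^{1+H\alpha'}\|b\|_{\cC^{\alpha'}},\qquad s_0\le s\le t\le1,
\]
for every $m<\infty$. I would apply L\^e's stochastic sewing lemma to the germ $A^n_{s,t}:=\E^s\int_s^t b^n(X^n_r)\,dr$. The key input is a conditional heat-kernel estimate obtained from a short-time expansion $X^n_r=X^n_s+\sigma(X^n_s)(B^H_r-\E^s B^H_r)+\E^s[X^n_r-X^n_s]+R_{s,r}$: the first two pieces are $\cF_s$-measurable plus a conditionally Gaussian innovation with covariance $c(H)|r-s|^{2H}\sigma\sigma^\top(X^n_s)\succeq\lambda c(H)|r-s|^{2H}\I$, so \eqref{eq:very-basic2} applied to a shifted $b^n$ yields $\|\E^s b^n(X^n_r)\|_{L^m}\lesssim|r-s|^{H\alpha'}\|b\|_{\cC^{\alpha'}}$, provided $R_{s,r}$ is shown to be of higher order with a bound that is independent of $\|b^n\|_{\cC^k}$. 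Integrating gives $\|A^n_{s,t}\|_{L^m}\lesssim(t-s)^{1+H\alpha'}$, and since $\E^s\delta A^n_{s,u,t}=0$ the second sewing condition is automatic. L\^e's lemma applies as soon as $1+H\alpha'>1/2$, i.e. $\alpha'>1/2-1/(2H)$, which is the precise source of the threshold.

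\emph{Tightness, Skorokhod, identification.} Inserting the bound on $D^n$ back into the equation and using \eqref{eq:rough continuity} gives uniform controlled-path estimates $(X^n,\sigma(X^n))\in\cD^\gamma_{B^H}$ for some $\gamma\in(1-H,2H)$, together with $L^m$ moments on all the seminorms. Hence $(X^n,\sigma(X^n),D^n,B^H,\bB^H,W)$ is tight in a product of slightly weaker H\"older / rough-path spaces, and Skorokhod representation furnishes a probability space $(\bar\Omega,\bar\cF,\bar\P)$ carrying a two-sided Wiener process $\bar W$, its Mandelbrot--van Ness transform $\bar B^H$, a pathwise rough-path lift $\bar\bB^H$ (the measurability required by $\mathbf{A}_{\mathrm{rough}}$(ii) and Chen's identity survive the a.s. limit), and a.s.\ limits $\bar X,\bar D$ of the respective sequences. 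The stability estimates \eqref{eq:controlled-comp-stability}--\eqref{eq:rough continuity} identify $\lim_n\int\sigma(\bar X^n)\,d\bar B^{H,n}=\int\sigma(\bar X)\,d\bar B^H$, yielding $\bar X=x_0+\bar D+\int_{s_0}^\cdot\sigma(\bar X)\,d\bar B^H$. For the last bullet point I would re-run the sewing argument of the previous paragraph on $(\bar\Omega,\bar\cF,\bar\P)$ applied to the germ $\bar\E^s\int_s^t(b^n-b^m)(\bar X_r)\,dr$: since $\|b^n-b^m\|_{\cC^{\alpha'}}\to0$, the integrals $\int_{s_0}^\cdot b^n(\bar X_r)\,dr$ are Cauchy in $L^m(\bar\P;\cC^0)$, and a parallel sewing estimate comparing $b^n(\bar X^n_\cdot)$ with $b^n(\bar X_\cdot)$ (using $\bar X^n\to\bar X$ a.s.\ and uniform controlled-path bounds) shows that this limit coincides with $\bar D$.

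\emph{Main obstacle.} The hardest step is making the conditional heat-kernel estimate rigorous with constants independent of $\|b^n\|_{\cC^k}$. The linearisation $X^n_r\approx X^n_s+\sigma(X^n_s)(B^H_r-\E^s B^H_r)$ is a natural candidate to plug into \eqref{eq:very-basic2}, but the remainder $R_{s,r}$ is tangled with the controlled-path structure of $X^n$, including the very quantity $D^n$ one is trying to control. Closing this loop requires a bootstrap where one first proves a weaker H\"older bound on $X^n$ (say at regularity $H-\eps$) using only $\|\sigma\|_{\cC^2}$ and crude estimates on $D^n$, then upgrades via~\eqref{eq:rough continuity} and the sewing bound to $\cD^\gamma_{B^H}$-regularity, without ever invoking $\|b^n\|_{\cC^k}$. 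This self-referential control, coupled with uniform ellipticity, is what pins down the precise threshold $\alpha>1/2-1/(2H)$.
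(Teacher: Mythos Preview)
Your global strategy---mollify $b$, derive a uniform-in-$n$ sewing bound on $D^n$, tightness, Skorokhod, then identify $\bar D$ by sewing on the new space---matches the paper's. The gap is in the core sewing estimate. With the germ $A^n_{s,t}=\E^s\int_s^t b^n(X^n_r)\,dr$ you do get $\E^s\delta A^n_{s,u,t}=0$, but then all the difficulty is transferred to bounding $\|A^n_{s,t}\|_{L^m}$ by an $n$-independent constant. Your linearisation $X^n_r=(\cF_s\text{-meas.})+\sigma(X^n_s)(B^H_r-\E^sB^H_r)+R_{s,r}$ does not feed into \eqref{eq:very-basic2}, because that identity needs the argument to be $\cF_s$-measurable plus conditionally Gaussian; $R_{s,r}$ (containing for instance $\nabla\sigma\sigma(X^n_s)\cdot\bB^H_{s,r}$ and $D^n_r-\E^sD^n_r$) is neither, and comparing $\E^s b^n(X^n_r)$ with the frozen expectation directly would use a $\cC^\beta$-bound on $b^n$ for some $\beta>0$, which blows up in $n$ when $\alpha<0$. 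There is also an arithmetic slip: $1+H\alpha'>1/2$ gives $\alpha'>-1/(2H)$, not $1/2-1/(2H)$; since your sewing has $\E^s\delta A=0$ it only ever sees the weaker condition, so the threshold cannot arise where you locate it.

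The paper takes the dual route. The germ is the \emph{frozen} integral $A_{s,t}=\E^s\int_s^t f\big(X_s+\sigma(X_s)(B^H_r-B^H_s)\big)\,dr$, for which $|A_{s,t}|\lesssim(t-s)^{1+\alpha H}$ is immediate from the heat kernel. The price is $\E^s\delta A\neq0$, and bounding it is where $\alpha>1/2-1/(2H)$ genuinely enters (the controlled-path increment of order $(u-s)^{2H_-}$ against $[\cP_\Gamma f]_{\cC^1}\lesssim(r-u)^{(\alpha-1)H}$). Crucially, $|\E^s\delta A|$ is bounded \emph{almost surely} and \emph{linearly} in $[D]_{\cC^{1+\alpha H^+}}$ and $[(S,\sigma(X))]_{\cD^{2H_-}_{B^H}}$ (Lemma~\ref{lem:weak-integral} and Remark~\ref{rem:SSL-mod}); since this $[D]$-dependent term carries exponent $1+\eps>1+\alpha H^+$, the circularity you flag as the ``main obstacle'' is closed by a clean buckling on small intervals (Corollary~\ref{cor:weak-apriori}), and the same lemma then drives the identification of $\bar D$ via Corollary~\ref{cor:distr-integral}.
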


\section{The Young case}\label{sec:Young}
Throughout the section we fix $H\in(1/2,1)$ and $\alpha\in(1-1/(2H),1)$. Since lowering $\alpha$ is never a loss of generality, we will also assume $\alpha<2-1/H$.
Furhermore, we fix exponents
\begin{equ}
1/2<H_-<H^-<H,
\end{equ}
such that
$H_-+H^-> 1+\alpha H$, $1+\alpha H_-+(\alpha-2) H>0$, and $H_--H+\alpha H>0$, all of which are clearly possible\footnote{One could of course give explicit expressions to write $H_-$ and $H^-$ as functions of $\alpha$ and $H$, so in the sequel we do not consider them as additional parameters.} under the above conditions on $\alpha$ and $H$.

\subsection{A priori bound}
First we observe that appropriate stopping times based on the driving process control the seminorm of all solutions simultaneously.
For all $K \in\N$, define the stopping time 
\begin{equs} 
\tau_K= \inf\{ t >0 : [B^H]_{\cC^{H^-}([0, t])}  \geq  K^{H^--H_-}\}.
\end{equs}
Note that $t\mapsto [B^H]_{\cC^{H^-}([0, t])}$ is adapted and almost surely continuous, so $\tau_K$ is indeed a stopping time. Moreover, for all $\omega\in\Omega_H$ there is a $K_0(\omega)$ such that $\tau_K=1$ for all $K\geq K_0$.
\begin{lemma}                                                                         \label{lem:apriori-K}
Let $s_0\in[0,1)$, $x_0\in\R^d$, $b\in \cC^0$, and $\sigma\in \cC^1$.
Then there exists a constant $N=N(\|b\|_{\cC^0},\|\sigma\|_{\cC^1},\alpha,H,d,d_0)$ such that if $X$ is a solution of \eqref{eq:main}, then almost surely
\begin{equ}
\,[X_{ \cdot \wedge \tau_K} ]_{\cC^{H_-}([s_0, 1])} \leq \big(2\| b\|_{\cC^0} +1\big)K. 
\end{equ}
\end{lemma}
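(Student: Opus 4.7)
The strategy is to combine the Young integral remainder estimate \eqref{eq:Young remainder} with the pathwise control $[B^H]_{\cC^{H^-}([0,\tau_K])}\le K^{H^--H_-}$ built into the stopping rule, in order to produce a self-referential bound on $[X]_{\cC^{H_-}}$ over short intervals, and then to chain these estimates across $[s_0,1\wedge\tau_K]$. Since $X_{\cdot\wedge\tau_K}$ is constant on $[\tau_K,1]$, it is enough to bound $[X]_{\cC^{H_-}([s_0,1\wedge\tau_K])}$.

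\textbf{Local bound.} On any $[s,t]\subset[s_0,1\wedge\tau_K]$ the drift contributes at most $\|b\|_{\cC^0}|t-s|^{1-H_-}\le\|b\|_{\cC^0}$ to the $\cC^{H_-}$-seminorm. For the Young integral $I^\sigma_r=\int_{s_0}^r\sigma(X_u)\,dB^H_u$ I would apply \eqref{eq:Young remainder} with $f=\sigma(X)$ and $g=B^H$ (licit since $H_-+H^->1$), insert $\|\sigma(X)\|_{\cC^0}\le\|\sigma\|_{\cC^1}$, $[\sigma(X)]_{\cC^{H_-}}\le\|\sigma\|_{\cC^1}[X]_{\cC^{H_-}}$, and use the monotonicity $[B^H]_{\cC^{H^-}([s,t])}\le[B^H]_{\cC^{H^-}([0,\tau_K])}\le K^{H^--H_-}$. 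Restricting to $|t-s|\le K^{-1}$ makes $(K|t-s|)^{H^--H_-}\le 1$, so after dividing by $|t-s|^{H_-}$ and taking a supremum one obtains an inequality of the form
\begin{equs}
[X]_{\cC^{H_-}([s,t])}\le C_1+C_2|t-s|^{H_-}[X]_{\cC^{H_-}([s,t])},
\end{equs}
with $C_1,C_2$ depending only on $\|b\|_{\cC^0}$ and $\|\sigma\|_{\cC^1}$. Imposing additionally $|t-s|\le\delta_0:=\min\bigl(K^{-1},(2C_2)^{-1/H_-}\bigr)$ absorbs the self-referential term, yielding $[X]_{\cC^{H_-}([s,t])}\le A$ for a constant $A$ independent of $K$.

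\textbf{Chaining.} I would then partition $[s_0,1\wedge\tau_K]$ into $n\lesssim K$ subintervals $[u_{i-1},u_i]$ of length $\le\delta_0\sim K^{-1}$. For any $u\le v$ lying in subintervals indexed $i\le j$, the triangle inequality together with the local bound gives
\begin{equs}
|X_v-X_u|\le A(u_i-u)^{H_-}+A(v-u_{j-1})^{H_-}+(j-i-1)A\delta_0^{H_-}\le 2A|v-u|^{H_-}+A|v-u|\delta_0^{H_--1},
\end{equs}
using $(j-i-1)\delta_0\le|v-u|$ and the trivial bounds $(u_i-u),(v-u_{j-1})\le|v-u|$. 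Dividing by $|v-u|^{H_-}$, using $|v-u|\le 1$, and $\delta_0\sim K^{-1}$ gives $[X]_{\cC^{H_-}([s_0,1\wedge\tau_K])}\lesssim A+AK^{1-H_-}\lesssim AK$. The stated coefficient $(2\|b\|_{\cC^0}+1)$ is recovered by absorbing the $\|\sigma\|_{\cC^1}$-dependent contributions into the ``$+1$'' for $K$ above the parameter-dependent threshold $N$ appearing in the statement.

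\textbf{Main obstacle.} The principal subtlety lies in the chaining step: Hölder seminorms are not subadditive under concatenation of intervals, so combining the $\sim K$ constant-level local bounds inevitably introduces a factor $\delta_0^{H_--1}\sim K^{1-H_-}$. It is precisely the strict positivity of $H_-$ that keeps this below the target order $K$. Matching the precise prefactor $(2\|b\|_{\cC^0}+1)$, rather than a generic parameter-dependent multiple of $K$, furthermore requires bookkeeping the constants carefully and using the threshold $N$ in the statement to dominate the $\|\sigma\|_{\cC^1}$-dependent terms.
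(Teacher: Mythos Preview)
Your proposal is correct and takes essentially the same route as the paper: a local Young estimate on the stochastic integral, a buckling argument on intervals of length of order $K^{-1}$, and then a chaining over $\sim K$ such intervals. The paper works with the stopped process $X_{\cdot\wedge\tau_K}$ on the fixed interval $[s_0,1]$ rather than reducing to $[s_0,1\wedge\tau_K]$, and it buckles via the factor $N_0K^{H^--H_-}|T-U|^{H^--H_-}$ coming directly from \eqref{eq:Young continuity} instead of first imposing $|t-s|\le K^{-1}$ to isolate a $|t-s|^{H_-}$ factor, but these are cosmetic differences. For the chaining, the paper simply invokes ``subadditivity of H\"older norm along union of intervals'' without further comment, so your explicit treatment of the concatenation loss (the $\delta_0^{H_--1}\sim K^{1-H_-}$ factor) is in fact more careful than the paper's; both arrive at a bound linear in $K$, which is all that the rest of the argument ever uses.
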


\begin{proof}
Let $s_0 \leq U <T \leq 1$. For $(s, t) \in [U,T]_{\leq
}^2$, we have 
\begin{equs}
|X_{t \wedge \tau_K}-X_{s \wedge \tau_K}| \leq \big|D_{t \wedge \tau_K} -D_{s\wedge \tau_K}| +   \big|S_{t \wedge \tau_K} -S_{s \wedge \tau_K}|,
\end{equs}
where $D$ and $S$ are the deterministic and stochastic (or rather, Young) integrals in \eqref{eq:main}, respectively.
Obviously the first term on the right-hand side can be estimated by $\| b\|_{\cC^0} |t-s|$, while for the second one we have from \eqref{eq:Young continuity}
\begin{equs}
 \big|S_{t \wedge \tau_K} -S_{s \wedge \tau_K}|  & \leq N_0 \|  \sigma (X_{\cdot \wedge \tau_k}) \|_{\cC^{H_-}([U, T])} [ B^H_{\cdot \wedge \tau_K}]_{\cC^{H^-}([0 , \tau_K])} |t-s|^{H^-}
\\
& \leq  \big(\| \sigma\|_{\cC^0}  +  \| \sigma\|_{\cC^1} [X_{\cdot \wedge \tau_K}]_{\cC^{H_-}([U, T])}\big)N_0 K^{H^--H_-}|t-s|^{H^-},
\end{equs}
where $N_0=N_0(\alpha,H,d,d_0)$ is the constant from Young integration.
Upon dividing by $|t-s|^{H_-}$ and taking supremum over $s, t$, we get 
\begin{equs}
\, [X_{\cdot \wedge \tau_K}]_{\cC^{H_-}([U, T])} \leq \| b
\|_{\cC^0}  +  \big(\| \sigma\|_{\cC^0}  +  \| \sigma\|_{\cC^1} [X_{\cdot \wedge \tau_K}]_{\cC^{H_-}([U, T])}\big)N_0 K^{H^--H_-} |T-U|^{H^- - H_-}.
%\\
%+& \Theta \| \sigma\|_{C^1} [X_{\cdot \wedge \tau_K}]_{C^\beta([S, T])}|T-S|^{\gamma-\beta}.
\end{equs}
If $|T-U|\leq K^{-1}(2\|\sigma\|_{\cC^1} N_0)^{-1/(H^--H_-)}$, then the inequality buckles and we get %recalling that $\Theta=\delta_1 K^{\delta_2}$, by the definition of $\delta_1,\delta_2$ one has $|T-S|^{H^--H_-}\leq 1/(2N_H \Theta \| \sigma \|_{C^1})$. Therefore
\begin{equs}
\, [X_{\cdot \wedge \tau_K}]_{\cC^{H_-}([U, T])} \leq 2\| b\|_{\cC^0} +1.
\end{equs}
By covering the $[s_0,1]$ by intervals of size $K^{-1}(2\|\sigma\|_{\cC^1} N_0)^{-1/(H^--H_-)}$ and using the subadditivity of H\"older norm along union of intervals we get the claim.
%We conclude with the trivial inequality
%\begin{equ}
%\, [X_{\cdot \wedge \tau_K}]_{C^{H_-}([s_0, 1])}\leq\sum_{\ell=1}^{K}\, [X_{\cdot \wedge \tau_K}]_{C^{H_-}([(\ell-1)/K, \ell/K])}\leq \big(2\| b\|_{C^0} +1\big)K.
%\end{equ}
\end{proof}

\subsection{Strong uniqueness and stability}\label{sec:stabil-Young}
Throughout Section \ref{sec:stabil-Young} we fix $K\in\N$ and use the shorthand $\tau=\tau_K$.
Moreover, if $X$ is a solution of \eqref{eq:main} we define the process $\tilde X$ for $t\in[s_0,1]$ as 
\begin{equs}\label{eq:tildeX}
\tilde{X}_t= x_0+\int_{s_0}^t b(X_{s\wedge \tau})\,ds+\int_{s_0}^t \sigma(X_{s \wedge \tau})\,dB^H_s.
\end{equs}

\begin{remark}
Let us point out that although the stopping time is based on the driving noise, we will only stop the solutions and not the noise itself, in order to maintain the regularising effects.
\end{remark}

Recall that by convention we extend processes before their initial times as constants, i.e. $X_r\equiv x_0$ for $r\in[0,s_0]$, and the same for $\tilde X$.
Finally, set the shorthand
\begin{equ}
\,[B^H]_{\cC^{H^-}|\bF}= [(s, t) \mapsto \E^s B_t]_{C^{H^-}([0, 1]^2)}.
\end{equ}
For the above, notice that $\E^sB^H_t$ has a version which is continuous  in $(s, t)$.  Indeed,  by \eqref{eq:mandelbrot} it follows that for all $s,t$, we have with probability one
\begin{equs}
\E^s B^H_t = B^H_t-\int_{s\wedge t} ^t (t-u)^{H-1/2} \, dW_u. 
\end{equs}

\iffalse
\begin{lemma}     \label{lem:est-Dst}
Consider functions $f\in C^\beta ([0, 1]; \R^d )$ and $b \in C^\alpha(\R^d)$, and for $(s, t) \in [0, 1]_{\leq}$ set 
\begin{equs}
D_{s, t}:= \int_0^s b(f_r) \, dr + b(f_s)(t-s). 
\end{equs}
Then, for all $0 \leq s \leq u \leq t \leq 1$ we have 
\begin{equs}
| D_{u,  t} -D_{s, t} | \leq [ b]_{C^\alpha} [ f]_{C^\beta} (t-s)^{1+\alpha \beta}
\end{equs}
\end{lemma}

\begin{proof}
\begin{equs}
| D_{u,  t} -D_{s, t} |  =&  \Big| \int_s^u b(f_r) \, dr + b(f_u)(r-u)- b(f_s)(r-s) \Big| 
\\
\leq  &  \int_s^u | b(f_r)- b(f_u) |  \, dr +|  (b(f_u)-b(f_s))| (r-s) 
\\
\leq & [ b]_{C^\alpha} [ f]_{C^\beta} (t-s)^{1+\alpha \beta}
\end{equs}
\end{proof}
\fi

\begin{lemma}             \label{lem:Regularisation}
Let  $p \geq 2$, $s_0\in[0,1)$, $x_0\in\R^d$, $(S,T)\in[s_0,1]^2_\leq$, and let $b$ and $\sigma$ satisfy $\mathbf{A}_{\mathrm{Young}}$. Let  $X$ be a strong solution of \eqref{eq:main}.
Then there exists constants $N= N(\alpha, H, d, d_0, M,\lambda, p, K)$ and $\eps=\eps(\alpha,H)>0$ such that for all $f \in \cC^\alpha$, all adapted stochastic processes $Z$, and all $(s,t)\in[S,T]^2_\leq$, the  following bound holds
\begin{equs}
\Big\| \int_s^t & \big( f(\tilde{X}_r + Z_r) -f(\tilde{X}_r)\big) \, dr \Big\|_{L_p(\Omega)}
   \leq N  \| f\|_{\cC^\alpha}\| Z\|_{\scC^0_p([S,T])} |t-s|^{1/2+\eps}
\\
&\qquad\qquad+ N \|f\|_{\cC^\alpha}\Big(\big\|(1+ [B^H]_{\cC^{H^-}|\bF})Z\big\|_{\scC^0_p([S,T])}
+[Z]_{\scC_p^{1/2}([S,T])}\Big)|t-s|^{1+\eps}.\label{eq:one-big-bound}
\end{equs}
\end{lemma}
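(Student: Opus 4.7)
The approach I would take is via the stochastic sewing lemma (SSL) of L\^e. Set
\begin{equ}
\cA_t := \int_{s_0}^t \bigl( f(\tilde X_r + Z_r) - f(\tilde X_r) \bigr)\,dr, \qquad A_{s,t} := \E^s \int_s^t \bigl(f(\tilde X_r + Z_s) - f(\tilde X_r)\bigr)\,dr.
\end{equ}
The germ $A_{s,t}$ is $\cF_t$-measurable. The two scales $|t-s|^{1/2+\eps}$ and $|t-s|^{1+\eps}$ in the claim match exactly the two hypotheses of SSL — a direct $L_p$-bound on $A_{s,t}$ with exponent $>1/2$ and an $L_p$-bound on $\E^s\delta A_{s,u,t}$ with exponent $>1$ — so the plan is to verify both and then identify the resulting sewing with $\cA$.

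The common engine for both bounds is the conditional Gaussian identity \eqref{eq:very-basic2}. I would decompose
\begin{equ}
\tilde X_r = \tilde X_s + b(X_{s\wedge\tau})(r-s) + \Sigma_s(B^H_r - B^H_s) + V_{s,r}, \qquad \Sigma_s := \sigma(X_{s\wedge\tau}),
\end{equ}
where $V_{s,r}$ collects the H\"older remainder of the drift and the Young remainder of $\int\sigma(X)\,dB^H$. Lemma \ref{lem:apriori-K}, the Young estimate \eqref{eq:Young remainder}, and the setup $H_-+H^->1+\alpha H$ give $|V_{s,r}| \lesssim_K |r-s|^\theta$ for some $\theta>1$. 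With $\mu_s := \tilde X_s + b(X_{s\wedge\tau})(r-s) + \Sigma_s(\E^s B^H_r - B^H_s)$ and $\Gamma_s := c(H)|r-s|^{2H}\Sigma_s\Sigma_s^\top$, formula \eqref{eq:very-basic2} combined with $\Sigma_s\Sigma_s^\top \succeq \lambda\I$ and the standard heat-kernel smoothing yields, for any $\cF_s$-measurable vector $\xi$,
\begin{equ}
\E^s f(\tilde X_r + \xi) = \cP_{\Gamma_s} f(\mu_s + \xi) + O\bigl(\|f\|_{\cC^\alpha}\E^s|V_{s,r}|^\alpha\bigr), \qquad \|\nabla \cP_{\Gamma_s}f\|_{\cC^0} \lesssim \|f\|_{\cC^\alpha}|r-s|^{H(\alpha-1)}.
\end{equ}
The hypothesis $\alpha>1-1/(2H)$ is precisely what makes $1+H(\alpha-1)>1/2$, so integrating $|r-s|^{H(\alpha-1)}$ over $[s,t]$ produces the critical factor $|t-s|^{1/2+\eps}$.

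For the $A_{s,t}$-bound I would subtract the $\xi=Z_s$ and $\xi=0$ cases and apply the mean value theorem to the smoothed $\cP_{\Gamma_s}f$, obtaining $\|A_{s,t}\|_{L_p}\lesssim \|f\|_{\cC^\alpha}\|Z\|_{\scC^0_p}|t-s|^{1/2+\eps}$ modulo a $Z$-free higher-order piece of size $|t-s|^{1+\eps}$. For the $\delta A$-bound, the tower property gives $\delta A_{s,u,t} = \E^s\int_u^t \E^u[f(\tilde X_r+Z_s) - f(\tilde X_r+Z_u)]\,dr$, and the same smoothing at time $u$, combined with $\|Z_s-Z_u\|_{L_p}\leq[Z]_{\scC^{1/2}_p}|u-s|^{1/2}$, contributes a term scaling like $[Z]_{\scC^{1/2}_p}|t-s|^{3/2+H(\alpha-1)}\leq[Z]_{\scC^{1/2}_p}|t-s|^{1+\eps}$. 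The remaining approximation error, rewritten as a second difference of $f$ and controlled via $|\E^u B^H_r - B^H_u|\leq |r-u|^{H^-}[B^H]_{\cC^{H^-}|\bF}$, contributes the coefficient $\|(1+[B^H]_{\cC^{H^-}|\bF})Z\|_{\scC^0_p}$. Once these two estimates are in hand, SSL produces a sewing obeying the claimed bound, and it is identified with $\cA$ through the consistency check $\|\cA_t-\cA_s-A_{s,t}\|_{L_p}\to 0$ faster than $|t-s|$ along partition refinement, which follows from $\|Z_r-Z_s\|_{L_p}\leq[Z]_{\scC^{1/2}_p}|r-s|^{1/2}$ applied under the integral. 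The main obstacle is the $\delta A$-bound: preserving the linear-in-$Z$ structure through the approximation error and in particular extracting the conditional seminorm $[B^H]_{\cC^{H^-}|\bF}$ (rather than the pathwise $[B^H]_{\cC^{H^-}}$) from the Young-remainder terms — this is precisely where applying SSL through the conditional expectations $\E^s$ and $\E^u$ is essential.
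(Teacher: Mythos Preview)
Your overall plan --- stochastic sewing with $Z$ frozen at the left endpoint and smoothing via \eqref{eq:very-basic2} --- is the right one, but two ingredients are missing and without them the argument does not close with a bound linear in $Z$. The first is the choice of germ. With $A_{s,t}=\E^s\int_s^t[f(\tilde X_r+Z_s)-f(\tilde X_r)]\,dr$ the identity \eqref{eq:very-basic2} does not apply directly, because $\tilde X_r$ is not of the form ``$\cF_s$-measurable plus $\sigma(X_{s\wedge\tau})B^H_r$'': the Young remainder $V_{s,r}$ depends on $B^H|_{[s,r]}$ nonlinearly. Your remedy $\E^s f(\tilde X_r+\xi)=\cP_{\Gamma_s}f(\mu_s+\xi)+O(\|f\|_{\cC^\alpha}\E^s|V_{s,r}|^\alpha)$ does leave an error after subtracting $\xi=Z_s$ from $\xi=0$, but that error is a second difference of the \emph{unsmoothed} $f\in\cC^\alpha$, bounded only by $\min(|V_{s,r}|,|Z_s|)^\alpha$ --- neither ``$Z$-free'' (it vanishes at $Z=0$) nor linear in $Z$, and no interpolation restores linearity for $\alpha<1$. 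Since every term on the right of \eqref{eq:one-big-bound} carries a norm of $Z$, this breaks the claimed estimate. The paper instead builds the germ from a frozen process $\Xi_{s_-,r}$ so that \eqref{eq:very-basic2} applies \emph{exactly}; all differences are then of the smoothed $\cP_\Gamma f$, and linearity in $Z$ follows from \eqref{eq:C1}--\eqref{eq:C2} and \eqref{eq:HKKK}.

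The second gap appears once one freezes: $\E\delta A$ then contains a comparison of $\Xi$ at two base points, and via \eqref{eq:C2} this brings in $[\cP_\Gamma f]_{\cC^2}\lesssim(r-u)^{H(\alpha-2)}$. Under the standing assumption $\alpha<2-1/H$ this power is not integrable on $[u,t]$, so the ordinary sewing lemma forces the wrong threshold $\alpha>2-1/H$. The paper's cure is the \emph{shifted} stochastic sewing lemma (Lemma~\ref{lem:SSL}): one conditions at $s_-=s-(t-s)$, so that on the relevant integral $\int_u^t\cdots\,dr$ with $u=(s+t)/2$ one has $r-s\geq(t-s)/2$, and the singular factor $(r-s)^{H(\alpha-2)}$ is simply bounded rather than integrated near its singularity. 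This is exactly what the paper's footnote flags; your sketch, conditioning at $s$ and $u$, has no such mechanism, and your description of how the coefficient $\|(1+[B^H]_{\cC^{H^-}|\bF})Z\|_{\scC^0_p}$ emerges from a ``second difference of $f$'' is where this divergence is hiding.
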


\begin{proof}
We may and will assume $\|f\|_{\cC^\alpha}=1$.
For $S\leq s\leq u\leq r\leq t\leq T$, set
\begin{equs}
\Theta_{s, r} & = \int_{s_0}^sb(X_{v  \wedge \tau})  \, d v + b(X_{s \wedge \tau})(r-s),
\\
\Sigma_s &=\int_{s_0}^s \sigma(X_{v\wedge\tau})\,dB^H_v,
\\
\Xi_{s,r}  &= \Theta_{s, r} +\Sigma_s+ \sigma(X_{s\wedge \tau}) ( B^H_r- B^H_s),
\\
 \hat {\Xi}_{s,u, r} &= \Theta_{s, r}  +\Sigma_s+ \sigma(X_{s\wedge \tau}) ( \E^u B^H_r- \ B^H_s).
\end{equs}
We wish to apply stochastic sewing in the form of Lemma \ref{lem:SSL}.
Introducing the shorthand $s_-=s-(t-s)$, recall from Lemma \ref{lem:SSL} the notation $\overline{[S,T]}_\leq^2=\{(s,t)\in[S,T]^2:\,s_-\geq S\}$.
We wish to verify the conditions of the lemma with the processes
\begin{equs}
A_{s, t}&= \E^{s_-} \int_s^t f(\Xi_{s_-, r} +Z_{s_-}) -f(\Xi_{s_-, r}) \, dr,
\\
\cA_t&=\int_S^t \big(f(\tilde{X}_r + Z_r) -f(\tilde{X}_r)\big) \, dr.
\end{equs}
Note that with the notation 
$$
\Gamma(s,u, r)= c(H)(r-u)^{2H} \sigma \sigma^\top (X_{s\wedge \tau}),
$$ 
we have by \eqref{eq:very-basic2} the identity $\E^u f(\Xi_{s,r})=\cP_{\Gamma(s,u,r)}f(\hat\Xi_{s,u,r})$.
Therefore by \eqref{eq:C1}, \eqref{eq:HKKK}, and the lower bound assumed on $\sigma\sigma^\top$, we have almost surely
\begin{equs}
|A_{s, t}|& = \Big|\int_s^t \cP_{\Gamma(s_-,s_-, r)}f (\hat{\Xi}_{s_-,s_-,  r}+Z_{s_-} )- \cP_{\Gamma(s_-,s_-, r)}f (\hat{\Xi}_{s_-,s_-, r}) \, dr\Big| 
\\
&\leq |Z_{s_-}|\int_s^t[\cP_{\Gamma(s_-,s_-, r)}f]_{\cC^1}\,dr 
\\
& \lesssim |Z_{s_-}| \int_s^t (r-s)^{(\alpha-1)H}   \, dr. 
\end{equs}
After taking $L_p(\Omega)$ norms, we get
\begin{equs}
\| A_{s, t} \|_{L_p(\Omega)} \lesssim \| Z\|_{\scC^0_p([S,T])} (t-s)^{1+(\alpha-1)H}.
\end{equs}
Since $1+(\alpha-1)H>1/2$, condition \eqref{eq:SSL-cond1} is satisfied with $N_1=N  \| Z\|_{\scC^0_p([S,T])}$.

Moving on to  \eqref{eq:SSL-cond2}, recall that therein we use the notation $u=(s+t)/2$. 
We can therefore write
\begin{equs}
\E^{s_-} \delta A_{s, u, t}&= \E^{s_-} \int_{s}^u f(\Xi_{s_-,r}+Z_{s_-})-f(\Xi_{s_-,r})-f(\Xi_{s-(u-s),r}+Z_{s-(u-s)})+f(\Xi_{s-(u-s),r})\,dr
\\
&\quad + \E^{s_-} \int_{u}^t f(\Xi_{s_-,r}+Z_{s_-})-f(\Xi_{s_-,r})-f(\Xi_{s,r}+Z_{s})+f(\Xi_{s,r})\,dr=:I^1+I^2.
%%%%% rewriting the term a bit, might be a bad idea
%\int_{u_-}^s \big(\E^u f(\Xi_{s, r} +Y_s) -\E^u f(\Xi_{s, r}) \, dr - \E^u f(\Xi_{u, r} +Y_u) +\E^u f(\Xi_{u, r})\big) \, dr.
%%%%%
\end{equs}
The two terms $I^1$ and $I^2$ are treated in a virtually identical way, so we only detail the latter.
By the tower property of conditional expectations and conditional Jensen's inequality we can write
\begin{equ}
\|I^2\|_{L_p(\Omega)}\leq \int_u^t \|J_{r}\|_{L_p(\Omega)}\,dr,
\end{equ}
where
\begin{equs}
J_r&=  \E^s f(\Xi_{s_-, r} +Z_{s_-}) -\E^s f(\Xi_{s_-, r}) \, dr - \E^s f(\Xi_{s, r} +Z_s) +\E^s f(\Xi_{s, r})
\\
&=   \cP_{\Gamma(s_-,s, r)}f (\hat{\Xi}_{s_-,s, r}+Z_{s_-} )- \cP_{\Gamma(s_-,s, r)}f (\hat{\Xi}_{s_-,s, r})
\\&\qquad -  \cP_{\Gamma(s,s, r)}f (\hat{\Xi}_{s,s, r}+Z_s)+ \cP_{\Gamma(s,s, r)}f (\hat{\Xi}_{s,s, r}).%= F^1_{s, u, r}+F^2_{s, u, r},
\end{equs}
Therefore we can further decompose as $J_r=J^1_r+J^2_r$, with
\begin{equs}
J^1_{r}&=   \cP_{\Gamma(s_-,s, r)}f (\hat{\Xi}_{s_-,s, r}+Z_{s_-} ) - \cP_{\Gamma(s_-,s, r)}f (\hat{\Xi}_{s_-,s, r})
\\
&\qquad -   \cP_{\Gamma(s_-,s, r)}f (\hat{\Xi}_{s,s, r}+Z_s)+ \cP_{\Gamma(s_-,s, r)}f (\hat{\Xi}_{s,s, r}),\label{eq:def-J1}
\end{equs}
and 
\begin{equs}
J^2_r&=   \cP_{\Gamma(s_-,s, r)}f (\hat{\Xi}_{s,s, r}+Z_s)-  \cP_{\Gamma(s,s, r)}f (\hat{\Xi}_{s,s, r}+Z_s)
\\
&\qquad -\cP_{\Gamma(s_-,s, r)}f (\hat{\Xi}_{s,s, r})+\cP_{\Gamma(s,s, r)}f (\hat{\Xi}_{s,s, r}).\label{eq:def-J2}
\end{equs}
Concerning $J^1$, by \eqref{eq:C2} and \eqref{eq:HKKK} we have
\begin{equs}        
|J^1_r|&\leq|Z_{s-}-Z_s|[\cP_{\Gamma(s_-,s, r)}f]_{\cC^1}+|Z_s||\hat{\Xi}_{s_-,s, r}-\hat{\Xi}_{s,s, r}|[\cP_{\Gamma(s_-,s, r)}f]_{\cC^2}
\\
&\lesssim
|Z_{s-}-Z_s|(t-s)^{(\alpha-1)H}+|Z_s||\hat{\Xi}_{s_-,s, r}-\hat{\Xi}_{s,s, r}|(t-s)^{(\alpha-2)H}.\label{eq:est-F1}
\end{equs}
where\footnote{This is the point where the shifted variant of the stochastic sewing lemma is used, otherwise the power $(\alpha-2)H$ would need to be integrated, which would result in the ``wrong'' condition $\alpha>2-1/H$.} we used that $r-s\geq (t-s)/2$ for $r\geq u$.
Next, notice that 
\begin{equs}
|\hat{\Xi}_{s_-,s, r}-\hat{\Xi}_{s,s, r}| & \leq | \Theta_{s_-,r} - \Theta_{s, r} | 
\\
&\quad +\big| \Sigma_{s_-}+ \sigma(X_{s_-\wedge \tau}) ( \E^s B^H_r- \ B^H_{s_-}) -\Sigma_s- \sigma(X_{s\wedge \tau}) ( \E^s B^H_r- \ B^H_s)\big|
\\
& \leq  | \Theta_{s_-,r} - \Theta_{s, r} | 
+\big| \Sigma_{s_-} -\Sigma_s+ \sigma(X_{s_-\wedge \tau}) ( B^H_s- B^H_{s_-})\big|
\\
 & \quad+\big| \E^s \big( ( \sigma(X_{s_-\wedge \tau})  - \sigma(X_{s\wedge \tau}) ( B^H_r-  B^H_s)\big) \big|.
\end{equs}
For the first term recall that by Lemma \ref{lem:apriori-K} we have $[X_{ \cdot \wedge \tau} ]_{C^{H_-}([0, 1])} \lesssim 1$ and therefore
\begin{equs}
| \Theta_{s,r} -\Theta_{s_-,  r} | & =  \Big| \int_{s_-}^s\big( b(X_{v\wedge\tau})-b(X_{s_-\wedge\tau})\big) \, dv + \big( b(X_{s\wedge\tau})-b(X_{s_-\wedge\tau})\big)(r-s) \,\Big|\\&\lesssim (t-s)^{1+\alpha H_-}.
\end{equs}
For the second term, by  Young integration we have
\begin{equs}
| \Sigma_{s_-} -\Sigma_s+ \sigma(X_{s_-\wedge \tau}) ( B^H_s- B^H_{s_-})|  
& \lesssim [ \sigma(  X_{\cdot \wedge \tau}   )]_{\cC^{H_-}} [ B^H]_{\cC^{H^-} | \bF} (s-s_-)^{H_-+H^-}
\\
& \lesssim  [ B^H]_{\cC^{H^-}|\bF} (t-s)^{H_-+H^-}.
\end{equs}
%where we have used that 
%\begin{equs}      \label{eq:est-sigma(X)}
% [ \sigma(  X_{\cdot \wedge \tau}   )]_{C^\beta} \leq  \| \sigma\|_{C^1}[   X_{\cdot \wedge \tau}   ]_{C^\beta} \leq N K. 
%\end{equs}
Finally, for the third term we have 
\begin{equs}
\big| \E^s \big( ( \sigma(X_{s_-\wedge \tau})  - \sigma(X_{s\wedge \tau}) ( B^H_r-  B^H_s)\big) \big| & \lesssim
% [  X_{\cdot \wedge \tau}   ]_{C^\beta} [ B]_{C^\gamma} (t-s)^{\beta+\gamma}
%\\
%& \leq  N  K  
 [ B^H]_{\cC^{H^-}|\bF}  (t-s)^{H_-+H^-}.
\end{equs}
Recalling that $H_-+H^-\geq 1+\alpha H_-$, we can conclude
\begin{equs}
\|J^1_r\|_{L_p(\Omega)}&\lesssim \big\|(1+ [ B^H]_{\cC^{H^-}| \bF})Z\big\|_{\scC^0_p([S,T])}(t-s)^{1+\alpha H_-+(\alpha-2) H}
\\
&\qquad
+[Z]_{\scC_p^{1/2}([S,T])}(t-s)^{1/2+(\alpha-1)H}.
\end{equs}
%Consequently, we have that 
%\begin{equs}
%|\hat{\Xi}_{s,u, r}-\hat{\Xi}_{u,u, r}| \leq N K (t-s)^{1+\alpha \beta} + N   K   [ B]_{C^\gamma}  (t-s)^{\beta+\gamma}.
%\end{equs}
For $J^2$, by \eqref{eq:C1} and Propositions \ref{prop:HK-Gamma} and \ref{prop:HK-Gammadiff} we have
\begin{equs}
|J^2_r| &\leq    |Z_s |  \big[ \cP_{\Gamma(s_-,s, r)}f- \cP_{\Gamma(s,s, r)}f\big]_{C^1} 
\\
&\lesssim  |Z_s | \big|\Gamma(s_-,s, r)- \Gamma(s,s, r)\big|\big( \big|\Gamma(s_-,s, r)^{-1}\big| \vee \big|\Gamma(s,s, r)^{-1}\big|\big)^{3/2-\alpha/2 } 
\\
 &\lesssim |Z_s | |X_{s_- \wedge \tau} - X_{s \wedge \tau}| (r-s)^{2H} (r-s)^{-3H+ \alpha H}
 \\
&  \lesssim   |Z_s |  (t-s)^{H_--H+\alpha H},\label{eq:bounding-J2}
\end{equs}
and therefore
\begin{equ}
\|J^2_r\|_{L_p(\Omega)}\lesssim\|Z\|_{\scC^0_p([S,T])}(t-s)^{H_--H+\alpha H}.
\end{equ}
By assumption, all of $1+\alpha H_-+(\alpha-2) H$, $1/2+(\alpha-1)H$, and $H_--H+\alpha H$ are positive, so after integration we get the bound
%Putting all these estimates together (and pretending for now that there is no blow up, which should be fine with the shifted SSL), we get if $\beta < \gamma $ are sufficiently close to $H$ that 
\begin{equs}
\| \E^s \delta A_{s, u, t} \|_{L_p(\Omega)}
\lesssim \Big(\big\|(1+ [ B^H]_{\cC^{H^-}|\bF})Z\big\|_{\scC^0_p([S,T])}
+[Z]_{\scC_p^{1/2}([S,T])}\Big)(t-s)^{1+\eps_2}
\end{equs}
for some $\eps_2>0$. Therefore condition \eqref{eq:SSL-cond2} is satisfied with $N_2=N\Big(\big\|(1+ [ B^H]_{\cC^{H^-}|\bF})Z\big\|_{\scC^0_p([S,T])}
+[Z]_{\scC_p^{1/2}([S,T])}\Big)$.

It remains to verify \eqref{eq:SSL-cond3}.
Introduce the auxiliary quantity
\begin{equ}
\hat A_{s,t}=(t-s)\big(f(\tilde X_{s_-}+Z_{s_-})-f(\tilde X_{s_-})\big).
\end{equ}
We then trivially have
\begin{equ}
\|\cA_t-\cA_s-\hat A_{s,t}\|_{L_p(\Omega)}\leq\int_s^t\big\|f(\tilde X_r+Z_r)-f(\tilde X_{r})-f(\tilde X_{s_-}+Z_{s_-})+f(\tilde X_{s_-})\big\|_{L_p(\Omega)}\,dr.
\end{equ}
Since $[\tilde X]_{\scC^{H_-}_p}\leq [\tilde X]_{L_p(\Omega, C^{H_-})}\lesssim 1$ and
% $[Z]_{L_p(\Omega, C^{1/4-1/(2p)})}\lesssim
$[Z]_{\scC^{1/2}_p}<\infty$ (otherwise the right-hand-side of \eqref{eq:one-big-bound} is infinite and the claim is trivial),
by the H\"older continuity of $f$ we get
\begin{equ}\label{eq:identify1-Young}
\|\cA_t-\cA_s-\hat A_{s,t}\|_{L_p(\Omega)}\leq N_3(t-s)^{1+\alpha/2}
\end{equ}
with some constant $N_3\geq0$.
Second, since $\hat A_{s,t}$ is $\cF_{s_-}$-measurable, by conditional Jensen's inequality we can write
\begin{equ}
\|\hat A_{s,t}-A_{s,t}\|_{L_p(\Omega)}\leq\int_s^t\big\|f(\Xi_{s_-, r} +Z_{s_-}) -f(\Xi_{s_-, r})-f(\tilde X_{s_-}+Z_{s_-})+f(\tilde X_{s_-})\big\|_{L_p(\Omega)}\,dr.
\end{equ}
From the trivial bound $\|\Xi_{s,r}-\tilde X_s\|_{L_p(\Omega)}\lesssim |r-s|^H$ we get
\begin{equ}\label{eq:identify2-Young}
\|\hat A_{s,t}-A_{s,t}\|_{L_p(\Omega)}\lesssim (t-s)^{1+\alpha H}.
\end{equ}
By \eqref{eq:identify1-Young}-\eqref{eq:identify2-Young} we get \eqref{eq:SSL-cond3}. Therefore all conditions of Lemma \ref{lem:SSL} are satisfied and \eqref{eq:one-big-bound} follows from \eqref{eq:SSL-conc}.
\end{proof}

As is often the case with uniqueness statements, it is useful to prove a stronger statement including some form of stability of the solutions with respect to changing certain data.
Therefore we consider the auxiliary equation 
\begin{equs} \label{eq:aux}
Y_t=y_0+\int_{s_0}^t \tilde b (Y_s) \, ds + \int_{s_0}^t \sigma (Y_s) \, dB^H_s. 
\end{equs}

\begin{lemma}                              \label{lem:stability}
Let  $p \geq 1$, $s_0\in[0,1)$, $x_0, y_0 \in\R^d$. Let $b$, $\tilde b$, and $\sigma$ satisfy $\mathbf{A}_{\mathrm{Young}}$.
Then there exists a constant $N_1$, depending only on  $\alpha, H, d, d_0, M, \lambda, p,$ and $ K$, as well as a constant $\gamma=\gamma(\alpha,H)\in(0,2)$,  with the following property:
 for any  $X$ and $Y$ strong solutions of  \eqref{eq:main} and \eqref{eq:aux}, respectively, and for all $C\geq 1$,  the following estimate holds
\begin{equs}
\| X_{\cdot \wedge\tau }- Y_{\cdot \wedge\tau }\|_{\scC^{1/2}_p([s_0,1])}  \leq N_1 ^{C^{2-\gamma}} \Big( |x_0-y_0|+  \big( \P ([ B^H ]_{\cC^{H^-}|\bF}\geq C ) \big)^{1/2p} +\|b-\tilde b\|_{C^0(\R^d)} \Big). 
\end{equs}
%where $m=(N_2C)^{2-\gamma} $.
\end{lemma}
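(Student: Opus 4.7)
The strategy is to derive a buckling inequality for $V_t := X_{t\wedge\tau} - Y_{t\wedge\tau}$ on short sub-intervals whose length is tuned to $C$, and then to iterate it over a partition of $[s_0,1]$. Since $V_t = V_{t\wedge\tau}$, setting $Z_r := V_r = Y_{r\wedge\tau} - X_{r\wedge\tau}$ (adapted), on $\{r \leq \tau\}$ we have the identity $b(X_r) - b(Y_r) = b(\tilde X_r) - b(\tilde X_r + Z_r)$, which makes Lemma~\ref{lem:Regularisation} applicable with $f = b$. I would decompose
\begin{equs}
V_t - V_s = I^b_{s,t} + I^{b-\tilde b}_{s,t} + I^\sigma_{s,t}
\end{equs}
into the three contributions from the difference of drifts $b$ evaluated at $X$ vs.\ $Y$, the perturbation $(b - \tilde b)(Y)$, and the noise difference $\sigma(X) - \sigma(Y)$.

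On an interval $[U, U+\Delta] \subset [s_0,1]$, I would estimate the three pieces as follows. For $I^b$, Lemma~\ref{lem:Regularisation} yields
\begin{equs}
\|I^b_{s,t}\|_{L_p} \lesssim \|V\|_{\scC^0_p}|t-s|^{1/2+\eps} + \bigl(\|(1 + [B^H]_{\cC^{H^-}|\bF})V\|_{\scC^0_p} + [V]_{\scC^{1/2}_p}\bigr)|t-s|^{1+\eps}.
\end{equs}
To replace $(1 + [B^H]_{\cC^{H^-}|\bF})$ by the deterministic factor $(1+C)$, I would split on $\Omega_C := \{[B^H]_{\cC^{H^-}|\bF} \leq C\}$ and apply Cauchy--Schwarz on $\Omega_C^c$, using the $L_\infty$ a priori bound $\|V\|_{L_\infty(\Omega \times [s_0,1])} \lesssim K$ from Lemma~\ref{lem:apriori-K}; this produces the extra term $K(\P(\Omega_C^c))^{1/(2p)}$. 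The perturbation contribution is immediate: $\|I^{b-\tilde b}_{s,t}\|_{L_p} \leq \|b - \tilde b\|_{\cC^0}|t-s|$. For $I^\sigma$, Young integration combined with $|\sigma(X_s) - \sigma(Y_s)| \leq M|V_s|$, the composition bound $[\sigma(X) - \sigma(Y)]_{\cC^{H_-}} \lesssim M[V]_{\cC^{H_-}} + MK\|V\|_{\cC^0}$ (using $\sigma \in \cC^2$ and the Lemma~\ref{lem:apriori-K} bounds $[X_{\cdot\wedge\tau}]_{\cC^{H_-}}, [Y_{\cdot\wedge\tau}]_{\cC^{H_-}} \lesssim K$), and the stopping time bound $[B^H]_{\cC^{H^-}([0,\tau])} \leq K^{H^--H_-}$, produces a bound depending on $K$ but not on $C$.

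Using $\|V\|_{\scC^0_p([U,U+\Delta])} \leq \|V_U\|_{L_p} + [V]_{\scC^{1/2}_p([U,U+\Delta])}\Delta^{1/2}$, dividing by $|t-s|^{1/2}$ and taking the supremum, one arrives at a buckling inequality of the form
\begin{equs}
[V]_{\scC^{1/2}_p([U,U+\Delta])} &\leq N_0\bigl(\|V_U\|_{L_p} + \|b - \tilde b\|_{\cC^0} + K(\P(\Omega_C^c))^{1/(2p)}\bigr)\\
&\quad + N_0\bigl((1+C)\Delta^{\eta_1} + \Delta^{\eta_2}\bigr)[V]_{\scC^{1/2}_p([U,U+\Delta])}
\end{equs}
for suitable $\eta_1, \eta_2 > 0$ and $N_0 = N_0(K,M,\alpha,H,d,d_0,p)$; tracking the exponents from Lemma~\ref{lem:Regularisation}, one has $\eta_1 = 1+\eps$. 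Choosing $\Delta \sim c_0(1+C)^{-1/\eta_1}$ absorbs the last term, partitioning $[s_0,1]$ into $\lceil \Delta^{-1}\rceil \lesssim C^{2-\gamma}$ pieces with $\gamma := 2 - 1/\eta_1 \in (0,2)$, and propagating the endpoint estimates $\|V_{U_{i+1}}\|_{L_p} \leq \|V_{U_i}\|_{L_p} + [V]_{\scC^{1/2}_p([U_i,U_{i+1}])}\Delta^{1/2}$ via a discrete Gronwall over $\lesssim C^{2-\gamma}$ steps produces the multiplicative constant $N_1^{C^{2-\gamma}}$.

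The main obstacle will be the careful accounting of the non-adapted event $\Omega_C$: because $[B^H]_{\cC^{H^-}|\bF}$ is $\cF_1$-measurable but not $\cF_t$-adapted, the event $\Omega_C$ cannot be realised as a stopping time and hence cannot be fed into Lemma~\ref{lem:Regularisation} by truncating $Z$. It must instead be dealt with a posteriori via the Cauchy--Schwarz splitting against the $L_{2p}$ a priori bound of Lemma~\ref{lem:apriori-K}. Balancing the $C$-scale coming from the regularisation estimate against the $K$-scale coming from Young integration, and optimising the resulting buckling exponent so that the number of iterations scales exactly as $C^{2-\gamma}$, is the delicate bookkeeping point.
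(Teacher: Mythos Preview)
Your proof follows the same architecture as the paper's: decompose $V$ into drift, perturbation, and noise contributions, apply Lemma~\ref{lem:Regularisation} to the drift part with the Cauchy--Schwarz splitting on $\Omega_C^c$ against the $L_\infty$ a priori bound of Lemma~\ref{lem:apriori-K}, buckle on short intervals, and iterate over a partition of size $\sim C^{2-\gamma}$.

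There is one genuine gap. For $I^\sigma$ you invoke the composition bound in $\cC^{H_-}$, producing a term $[V_{\cdot\wedge\tau}]_{\cC^{H_-}}$ on the right-hand side. Since $H_- > 1/2$, this cannot be absorbed into the $\scC^{1/2}_p$ norm you are buckling: Kolmogorov only gives $\|\cdot\|_{L_p(\Omega;\cC^{1/2-\eps_0})} \lesssim \|\cdot\|_{\scC^{1/2}_p}$, not the reverse direction. The fix is immediate and is exactly what the paper does: run the Young remainder estimate with H\"older exponent $1/2 - \eps_0$ instead of $H_-$, choosing $\eps_0$ small enough that $1/2 - \eps_0 + H^- > 1$, and then use Kolmogorov to pass from $L_p(\Omega;\cC^{1/2-\eps_0})$ to $\scC^{1/2}_p$.

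There is also a benign difference in the treatment of $I^\sigma$. You use the stopped bound $[B^H]_{\cC^{H^-}([0,\tau])} \leq K^{H^- - H_-}$, so the Young contribution is $C$-independent; the paper instead bounds the unstopped $[B^H]_{\cC^{H^-}}$ via $[B^H]_{\cC^{H^-}|\bF}$ and applies a second $C$-splitting. Both routes close. One bookkeeping caution with yours: after splitting $\|V\|_{\scC^0_p} \leq \|V_U\|_{L_p} + [V]_{\scC^{1/2}_p}\Delta^{1/2}$, the term $(1+C)\|V\|_{\scC^0_p}\Delta^{1/2+\eps}$ leaves a coefficient $(1+C)\Delta^{1/2+\eps}$ on $\|V_U\|$, which is not $O(1)$ for your choice $\Delta \sim (1+C)^{-1/(1+\eps)}$. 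This does not break the iteration (the extra growth is compensated by the $\Delta^{1/2}$ in the endpoint propagation), but the buckling inequality as you wrote it, with a bare $N_0\|V_U\|$, is slightly optimistic.
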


\begin{proof}
Note that one may (and we will) assume $p$ to be sufficiently large.
Take $(U,T)\in[s_0,1]^2_{\leq}$. Denoting $Z=X-Y$, one has 
\begin{equs}
 \| Z_{\cdot \wedge \tau} \|_{\scC^{1/2}_p([U,T])} & \leq \| Z_{S \wedge \tau} \|_{L_p(\Omega)}+  2[ Z_{\cdot \wedge \tau} ]_{\scC^{1/2}_p([U,T])}
 \\
 &\leq \| Z_{S \wedge \tau} \|_{L_p(\Omega)}+2 [ D_{\cdot \wedge \tau} ]_{\scC^{1/2}_p([U,T])}
 \\
 & \qquad + 2 [ S_{\cdot \wedge \tau} ]_{\scC^{1/2}_p([U,T])}      + 2 [ E_{\cdot \wedge \tau} ]_{\scC^{1/2}_p([U,T])}             \label{eq:ready-to-buckle}
\end{equs}
where 
\begin{equs}
D_t=\int_{s_0}^t \big( b(X_r)-b(Y_r)\big)\,dr, \quad   S_t= \int_{s_0}^t \big(\sigma(X_r)-\sigma(Y_r)\big)\,d B^H_r, \quad
E_t= \int_{s_0}^t \big( b(Y_r)-\tilde b(Y_r)\big)\,dr.
\end{equs}
We have the trivial estimate
\begin{equs}
\, [ E_{\cdot \wedge \tau} ]_{\scC^{1/2}_p([U,T])}      \leq     \|b-\tilde b\|_{C^0(\R^d)}.      \label{eq:est-for-E}
\end{equs}
Further, since
\begin{align*}
 [ D_{\cdot \wedge \tau} ]_{\scC^{1/2}_p([U,T])} & =\Big[ \int_{s_0}^{\cdot \wedge \tau}\big( b(\tilde{X}_r)-b(\tilde{X}_r-Z_{r\wedge \tau})\big) \,dr  \Big]_{\scC^{1/2}_p([U,T])}
\\
&\leq\Big[ \int_{s_0}^{\cdot} \big(b(\tilde{X}_r)-b(\tilde{X}_r-Z_{r\wedge \tau})\big)\,dr  \Big]_{\scC^{1/2}_p([U,T])},
\end{align*}
we may apply Lemma \ref{lem:Regularisation} with $f=b$ to get 
\begin{equs}
 {[D_{\cdot \wedge \tau}]}_{\scC^{1/2}_p([U,T])} &\lesssim      \| Z_{\cdot \wedge \tau} \|_{\scC^0_p([U,T])} |T-U|^{\eps}
\\
&\quad+ \Big( \big\|(1+[B^H]_{\cC^{H^{-}}|\bF})Z_{\cdot \wedge \tau}\big\|_{\scC^0_p([U,T])}+[Z_{\cdot \wedge \tau}]_{\scC^{1/2}_p([U,T])} \Big) |T-U|^{1/2+\eps}.
\end{equs}
This in turn implies that, for any $C\geq 1$
\begin{equs}
 {[D^Z_{\cdot \wedge \tau}]} _ {\scC^{1/2}_p([U,T])}& \lesssim   C\| Z_{U \wedge \tau} \|_{L_p(\Omega)}+  C \| Z_{\cdot \wedge \tau}\|_{\scC^{1/2}_p([U,T])}  |T-U|^{1/2+\eps}
 \\
 &\qquad +   \big\| \bone_{[ B^H ]_{\cC^{H^{-}}|\bF}\geq C}  [ B^H ]_{\cC^{H^{-}}|\bF}Z_{\cdot \wedge \tau}\big\|_{\scC^0_p([U,T])}.     \label{eq:idk1}
\end{equs}
Then, notice that
\begin{equs}
 \big\| \bone_{[ B^H ]_{\cC^{H^{-}}|\bF}\geq C}  [ B^H ]_{\cC^{H^{-}}|\bF}Z_{\cdot \wedge \tau}\big\|_{\scC^0_p([U,T])} & \leq \big( \P ([ B^H ]_{\cC^{H^{-}}|\bF}\geq C ) \big)^{1/2p}  \big\| [ B^H ]_{\cC^{H^{-}}|\bF}Z_{\cdot \wedge \tau}\big\|_{\scC^0_{2p}([U,T])}
 \\
 & \lesssim  \big( \P ([ B^H ]_{\cC^{H^{-}}|\bF}\geq C ) \big)^{1/2p}, \label{eq:idk2}
\end{equs}
where for the last inequality we have used the fact that $\| [ B^H ]_{\cC^{H^{-}}|\bF}\|_{L_p(\Omega)}\lesssim 1$,
%has moments of any order $q \geq 1$, controlled by a constant which depends only on $H^{-}, q$,
while for  $ \sup_{t \in [0, 1]} |Z_{\cdot \wedge \tau}|$  we have by Lemma \ref{lem:apriori-K} the almost sure bound 
\begin{equs}
 \sup_{t \in [0, 1]} |Z_{\cdot \wedge \tau}| & \leq | Z_0| +  [Z_{\cdot \wedge \tau}]_{\cC^{H_-}}
  \leq |x_0-y_0|+[ X_{\cdot \wedge \tau}]_{\cC^{H_{-}}}+ [Y_{\cdot \wedge \tau}]_{\cC^{H_{-}}}\lesssim 1.
%  \\
%  & \leq 1 + 2( 2 N_b+1)K, 
\end{equs}
%where we have used Lemma \ref{lem:apriori-K}.
Consequently, from \eqref{eq:idk1} we arrive at 
\begin{equs}
{[D_{\cdot \wedge \tau}]} _ {\scC^{1/2}_p([U,T])}   &\lesssim   C\| Z_{U \wedge \tau} \|_{L_p(\Omega)} +   C \| Z_{\cdot \wedge \tau}\|_{\scC^{1/2}_p([U,T])} |T-U|^{1/2+\eps}
\\
& \qquad + \big( \P ([ B^H ]_{\cC^{H^-}|\bF}\geq C ) \big)^{1/2p}. 
  \label{eq:est-for-DZ}
\end{equs}
Next, consider the third term on the right hand side of \eqref{eq:ready-to-buckle}. By \eqref{eq:Young remainder}, upon choosing $\eps_0>0$ small so that $1/2-\eps_0+ H^{-} > 1+\eps_0$,  it follows that 
\begin{equs}
\Big|\int_s^t \sigma (X_{r\wedge \tau}) & -\sigma(Y_{r\wedge \tau}) \, dB^H_r \Big|
\\
 & \lesssim  \|\sigma (X_{ \cdot \wedge \tau})-\sigma(Y_{ \cdot\wedge \tau})\|_{\cC^0} [B^H]_{\cC^{H^{-}}} |t-s|^{H^{-}}
\\
& \qquad +  [\sigma (X_{\cdot \wedge \tau})-\sigma(Y_{ \cdot\wedge \tau})]_{\cC^{1/2-\eps_0}([U, T])} [B^H]_{C^{H^{-}}} |t-s|^{1+\eps_0}
\\
& \lesssim  [B^H]_{\cC^{H^{-}}} \| Z_{ \cdot \wedge \tau} \|_{\cC^0} |t-s|^{H^-}+ [Z_{ \cdot\wedge \tau}]_{C^{1/2-\eps_0}([U, T])} [B^H]_{\cC^{H^{-}}} |t-s|^{1+\eps_0}
\\
& \lesssim [B^H]_{\cC^{H^{-}}|\bF}| Z_{ U \wedge \tau}| |t-s| ^{1/2} +  [Z_{ \cdot\wedge \tau}]_{\cC^{1/2-\eps_0}([U, T])} [B^H]_{\cC^{H^{-}}|\bF} |t-s|^{1+\eps_0}.
\end{equs}
Upon taking $L_p(\Omega)$ norms and using
%Hence, by using the above as well as
Kolmogorov's continuity theorem in the form of the inequality $ \| \cdot \|_{ L_p\big(\Omega; \cC^{1/2-\eps_0}([U,T]) \big)} \lesssim \| \cdot \|_{\scC^{1/2}_p([U, T])}$ for continuous processes provided $p$ is sufficiently large, we get 
\begin{equs}
\Big\|\int_s^t \sigma (X_{r\wedge \tau})-\sigma(Y_{r\wedge \tau}) \, dB^H_r \Big\|_{L_p(\Omega)} &  \lesssim \big\| Z_{ U \wedge \tau}[B^H]_{\cC^{H^{-}}|\bF}\big\|_{L_p(\Omega)}  |t-s|^{1/2}
\\
& \qquad + \big[Z_{ \cdot \wedge \tau}[B^H]_{C^{H^{-}}|\bF}\big]_{\scC^{1/2}_p([U, T])}  |t-s|^{1+\eps_0}.
\end{equs}
The latter, upon dividing by $|t-s|^{1/2}$ and taking suprema over $(s, t)$ gives 
\begin{equs}
\, [ S_{\cdot \wedge \tau}]_{\scC^{1/2}_p([U, T])} &  \leq  \Big[ \int_0^\cdot \sigma (X_{r\wedge \tau})-\sigma(Y_{r\wedge \tau}) \, dB^H_r \Big]_{\scC^{1/2}_p([U, T])}
\\
& \lesssim  \big\| Z_{ U \wedge \tau}[B^H]_{\cC^{H^{-}}|\bF}\big\|_{L_p(\Omega)} 
+  \big[Z_{\cdot \wedge \tau}[B^H]_{\cC^{H^{-}}|\bF}\big]_{\scC^{1/2}_p([U, T])}  |T-U|^{1/2+\eps_0}
\\
 & \lesssim    C\| Z_{U \wedge \tau} \|_{L_p(\Omega)} +   C \| Z_{\cdot \wedge \tau}\|_{\scC^{1/2}_p([U,T])} |T-U|^{1/2+\eps_0}
 \\
 &\qquad + \big( \P ([ B^H ]_{\cC^{H^-}|\bF}\geq C ) \big)^{1/2p}, 
 \label{eq:est-for-SZ}
\end{equs}
where we used the argument from \eqref{eq:idk1}-\eqref{eq:idk2} in the last inequality.

Consequently,  by  \eqref{eq:ready-to-buckle}, \eqref{eq:est-for-E},  \eqref{eq:est-for-DZ}, and \eqref{eq:est-for-SZ}, 
we get
\begin{equs}
\, [ Z_{\cdot \wedge \tau}]_{\scC^{1/2}_p([U, T])} &   \lesssim  C\| Z_{U \wedge \tau} \|_{L_p(\Omega)} +   C \| Z_{\cdot \wedge \tau}\|_{\scC^{1/2}_p([U,T])} |T-U|^{1/2+\eps_1}
 \\
 &\qquad \qquad \qquad  + \big( \P ([ B^H ]_{\cC^{H^-}|\bF}\geq C ) \big)^{1/2p}+   \|b-\tilde b\|_{C^0(\R^d))},
\end{equs}
with $\eps_1= \eps_0 \wedge \eps$. 
In other words (after using the trivial inequality $\|\cdot\|_{\scC^{1/2}_p}\leq \|\cdot_U\|_{L_p(\Omega)}+2[\cdot]_{\scC^{1/2}_p}$),
 with a constant $N_0\lesssim 1$ we have
\begin{equs}
\| Z_{\cdot \wedge \tau}\|_{\scC^{1/2}_p([U, T])} &   \leq N_0  C\| Z_{U \wedge \tau} \|_{L_p(\Omega)} +  N_0  C \| Z_{\cdot \wedge \tau}\|_{\scC^{1/2}_p([U,T])} |T-U|^{1/2+\eps_1}
 \\
 &\qquad \qquad \qquad  + N_0 \big( \P ([ B^H ]_{\cC^{H^-}|\bF}\geq C ) \big)^{1/2p}+ N_0  \|b-\tilde b\|_{C^0(\R^d)}.\label{eq:buckling}
\end{equs}
 If $|T-U|^{1/2+\eps_1} \leq (2N_0 C)^{-1}$, then the inequality buckles and we get 
\begin{equ}
 \| Z_{\cdot \wedge \tau}\|_{\scC^{1/2}_p([U,T])}  \leq N  C\| Z_{U \wedge \tau} \|_{L_p(\Omega)}+  N  \big( \P ([ B^H ]_{\cC^{H^-}|\bF}\geq C ) \big)^{1/2p}+N  \|b-\tilde b\|_{C^0(\R^d)}.         \label{eq:iterable}
\end{equ}
for some $N$.  Let us now set $S_0=s_0$ and $S_{n+1}= S_n+ 1/m_1$, where $m_1=(2N_0C)^{2-\gamma_1}$, and $\gamma_1$  is defined by $1/(1/2+\eps_1)=2-\gamma_1$. In particular, $|S_{n+1}-S_n|^{1/2+\eps_1}= (2N_0 C)^{-1}$. We apply \eqref{eq:iterable} with $U=S_n$ and $T=S_{n+1}$ and we iterate, to get 
\begin{equs}
 \| Z_{\cdot \wedge \tau}\|_{\scC^{1/2}_p([S_n,S_{n+1}])} & \leq (N C)^{n+1} |x_0-y_0|+   \sum_{j=0}^n (N C)^j  \big(  \big( \P ([ B^H ]_{\cC^{H^-}|\bF}\geq C ) \big)^{1/2p}+  \|b-\tilde b\|_{C^0(\R^d)} \big)
 \\
 &\leq (N C)^{n+2} \Big(|x_0-y_0|+    \big(  \big( \P ([ B^H ]_{\cC^{H^-}|\bF}\geq C ) \big)^{1/2p}+  \|b-\tilde b\|_{C^0(\R^d)} \Big),
\end{equs}
 for some $N$ (which may change from line to line)
Since $S_k\geq 1$ as soon as $k\geq m_1$, we get
\begin{equs}
 \| Z_{\cdot \wedge \tau}\|_{\scC^{1/2}_p([s_0,1])} & \leq  (NC)^{m_1+2} \Big(   |x_0-y_0|+    \big( \P ([ B^H ]_{\cC^{H^-}|\bF}\geq C ) \big)^{1/2p}+\|b-\tilde b\|_{C^0(\R^d)} \ \Big) .
\end{equs}
From this the claim follows for any  $\gamma < \gamma_1$, with $N_1$ chosen sufficiently large.
%From this, it follows that if we choose  $\gamma < \gamma_1$,  $m=\floor{(N_0C)^{2-\gamma}} $ and $N_1$  sufficiently large, we get 
%\begin{equs}
%\| Z_{\cdot \wedge \tau}\|_{\scC^{1/2}_p([0,1])} & \leq  N_1^m \Big(   |x_0-y_0|+    \big( \P ([ B ]_{C^{H^-}}\geq C ) \big)^{1/2p}+\|b-\tilde b\|_{C^0(\R^d)} \ \Big) .
%\end{equs}
\end{proof}

\begin{corollary}\label{cor:initial-stability}
Let  $p \geq 1$, $s_0\in[0,1)$, $x_0,y_0\in\R^d$, and let $b$ and $\sigma$ satisfy $\mathbf{A}_{\mathrm{Young}}$.  For any $\theta \in (0, 1)$, there exists a constant $N=N(\theta,\alpha,H,d,d_0,M,\lambda,p,K)$
% depending only on  $\theta , \alpha, H, d, M, \lambda, p,$ and $ K$, 
with the following property:
if  $X$ and $Y$ are strong solutions of  \eqref{eq:main}  with initial conditions $x_0$ and $y_0$, respectively, then 
\begin{equs}\label{eq:initial-almost-Lipschitz}
\| X_{\cdot \wedge \tau}- Y_{\cdot \wedge \tau}\|_{\scC^{1/2}_p([s_0,1])}  \leq  N |x_0-y_0|^\theta. 
\end{equs}
%In particular, strong uniqueness holds. 
\end{corollary}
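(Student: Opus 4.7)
The plan is to apply Lemma \ref{lem:stability} with $\tilde b=b$, so that the drift-discrepancy term vanishes, and then to optimise the free parameter $C$ by exploiting the Gaussian concentration of $[B^H]_{\cC^{H^-}|\bF}$. With $\tilde b = b$ Lemma \ref{lem:stability} reads
\begin{equ}\label{eq:cor-plan-start}
\|X_{\cdot\wedge\tau}-Y_{\cdot\wedge\tau}\|_{\scC^{1/2}_p([s_0,1])}\leq N_1^{C^{2-\gamma}}\Big(|x_0-y_0|+\big(\P\big([B^H]_{\cC^{H^-}|\bF}\geq C\big)\big)^{1/(2p)}\Big)
\end{equ}
for every $C\geq 1$, with $\gamma=\gamma(\alpha,H)\in(0,2)$.

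The key observation is that the right-hand side of \eqref{eq:cor-plan-start} enjoys a Gaussian tail in $C$. Indeed, for $s\leq t$ one has $\E^s B^H_t=B^H_t-\int_s^t(t-u)^{H-1/2}\,dW_u$, so $(s,t)\mapsto \E^s B^H_t$ is a Gaussian process, and hence its Hölder seminorm $[B^H]_{\cC^{H^-}|\bF}$ is the norm of a Gaussian random element in a separable Banach space. By Fernique's theorem (equivalently Borell--TIS) there exist constants $c_0>0$ and $C_\ast\geq 1$, depending only on $H$, $H^-$, $d$, $d_0$, such that
\begin{equ}\label{eq:cor-plan-Gauss}
\P\big([B^H]_{\cC^{H^-}|\bF}\geq C\big)\leq \exp(-c_0 C^2)\qquad \text{for all }C\geq C_\ast.
\end{equ}

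Set $\delta=|x_0-y_0|$. The a priori bound of Lemma \ref{lem:apriori-K} controls $[X_{\cdot\wedge\tau}]_{\cC^{H_-}}+[Y_{\cdot\wedge\tau}]_{\cC^{H_-}}$ by a deterministic constant (recall $H_->1/2$), so for $\delta$ bounded below by a fixed $\delta_0>0$ the estimate \eqref{eq:initial-almost-Lipschitz} is immediate upon enlarging $N$. For small $\delta$, I would substitute \eqref{eq:cor-plan-Gauss} into \eqref{eq:cor-plan-start} and choose
\begin{equ}
C=A|\log\delta|^{1/(2-\gamma)},
\end{equ}
where $A>0$ is fixed (depending on $\theta$ and the other parameters) so that $A^{2-\gamma}\log N_1\leq 1-\theta$. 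Then $N_1^{C^{2-\gamma}}\delta=\delta^{1-A^{2-\gamma}\log N_1}\leq\delta^\theta$, while
\begin{equ}
N_1^{C^{2-\gamma}}\exp\!\Big(-\tfrac{c_0}{2p}C^2\Big)=\exp\!\Big(A^{2-\gamma}(\log N_1)|\log\delta|-\tfrac{c_0 A^2}{2p}|\log\delta|^{2/(2-\gamma)}\Big).
\end{equ}
Since $\gamma>0$ forces $2/(2-\gamma)>1$, the quadratic-in-$|\log\delta|$ term dominates the linear one as $\delta\to 0$, and this expression is bounded by $\delta^\theta$ for all $\delta$ below a threshold determined by $\theta,\alpha,H,d,d_0,M,\lambda,p,K$. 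Putting the two estimates back into \eqref{eq:cor-plan-start} yields \eqref{eq:initial-almost-Lipschitz}.

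There is no serious obstacle in this argument; the only delicate point is that the Gaussian gain from \eqref{eq:cor-plan-Gauss} must overcome the super-polynomial blow-up $N_1^{C^{2-\gamma}}$ of the buckling constant from Lemma \ref{lem:stability}, which is exactly why the condition $\gamma<2$ there was essential and why no exponent $\theta<1$ needs to be excluded.
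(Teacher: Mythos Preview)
Your proof is correct and follows essentially the same approach as the paper: apply Lemma~\ref{lem:stability} with $\tilde b=b$, invoke Fernique's theorem for the Gaussian tail of $[B^H]_{\cC^{H^-}|\bF}$, and optimise in $C$. The only cosmetic difference is the choice of $C$: the paper takes $C=\beta^{-1/2}\sqrt{\log(1/\delta)}$ so that the tail term becomes exactly $\delta$ and then uses $(2-\gamma)/2<1$ to absorb the prefactor $N_1^{C^{2-\gamma}}$ into $\delta^{\theta-1}$, whereas you take $C=A|\log\delta|^{1/(2-\gamma)}$ so that the $\delta$-term becomes exactly $\delta^\theta$ and then use $2/(2-\gamma)>1$ to kill the tail term; these are dual optimisations of the same trade-off and both rely only on $\gamma\in(0,2)$.
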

\begin{proof}
Note that due to the a priori bounds on $X$ and $Y$ by Lemma \ref{lem:apriori-K}, it suffices to prove \eqref{eq:initial-almost-Lipschitz} when $|x_0-y_0|$ is sufficiently small.
Recall that by Fernique's theorem there exists $\beta= \beta(\alpha,H,p)>0$ such that 
$$
\big( \P ([ B^H ]_{\cC^{H^-}|\bF}\geq C ) \big)^{1/2p} \leq e^{-\beta C^2}.
$$
If $x_0=y_0$, the above exponential bound together with  Lemma \ref{lem:stability} imply that 
\begin{equs}
\| X_{\cdot \wedge \tau}- Y_{\cdot \wedge \tau}\|_{\scC^{1/2}_p([s_0,1])}  \leq N_1 ^{C^{2-\gamma} } e^{-\beta C^2},
\end{equs}
with the constants $N_1$ and $\gamma$ as in Lemma \ref{lem:stability}, in particular, independent of $C$.  We can let now $C \to \infty$, and since $\gamma>0$, we see that the right-hand side above converges to $0$.  

If $x_0 \neq y_0$, then Lemma \ref{lem:stability} yields 
\begin{equs}
\| X_{\cdot \wedge \tau}- Y_{\cdot \wedge \tau}\|_{\scC^{1/2}_p([s_0,1])}  \leq N_1 ^{C^{2-\gamma} }  \Big( |x_0-y_0|+  \big( \P ([ B^H ]_{\cC^{H^-}|\bF}\geq C ) \big)^{1/2p} \Big). 
\end{equs}
Consequently, by choosing $C= \beta^{-1/2} \sqrt{\log(1/|x_0-y_0|) }$ (which is larger than $1$ if $|x_0-y_0|$ is sufficiently small) we get 
\begin{equs}
\| X_{\cdot \wedge \tau}- Y_{\cdot \wedge \tau}\|_{\scC^{1/2}_p([s_0,1])}  & \leq   \exp\Big(N_3 \big(\log(1/|x_0-y_0|)\big)^{(2-\gamma)/2} \Big)    |x_0-y_0|
\end{equs}
with some new constant $N_3$.
The bound \eqref{eq:initial-almost-Lipschitz} now follows: since $(2-\gamma)/2<1$,  for any $\theta \in (0, 1)$ we can choose $N$ large enough,  depending only on $N_3$, $\theta$, and $\gamma$, such that 
\begin{equs}
  \exp\Big(N_3  \big(\log(1/|x_0-y_0|)\big)^{(2-\gamma)/2} \Big)  & \leq  N    |x_0-y_0|^{\theta-1}.
\end{equs}
%Recalling that in the whole section $K$ was arbitrary, strong uniqueness indeed follows from \eqref{eq:initial-almost-Lipschitz}.
%which brings the proof to an end. 
\end{proof}

\subsection{Strong existence and the semiflow}
The advantage of the generality allowing the stability bounds in Lemma \ref{lem:stability} is that one can easily deduce strong existence of solutions as well as the existence of a certain notion of solution semiflow.
\begin{theorem}\label{thm:exist-Young}
Let   $s_0\in[0,1)$, $x_0\in\R^d$, and let $b$ and $\sigma$ satisfy $\mathbf{A}_{\mathrm{Young}}$. Then there exists a strong solution $X^{s_0,x_0}$ of \eqref{eq:main}.
Moreover, the random field $X$ given by
\begin{equ}
\big(\mathbb{Q}\cap[0,1)\big)\times\R^d\times[0,1]\ni (s_0,x_0,t)\mapsto X^{s_0,x_0}_t\in\R^d
\end{equ}
has a modification $\what X$ such that with an event $\what \Omega\subset\Omega_H$ of full probability one has for all $\omega\in\what \Omega$:
\begin{enumerate}[(i)]
\item For all $s_0\in[0,1)\cap\Q$, $x_0\in\R^d$, the map $[s_0,1]\ni t\mapsto \what X^{s_0,x_0}_t(\omega)$ is a solution of \eqref{eq:main};
\item For all $s_0,u_0\in[0,1)\cap\Q$, $s_0\leq u_0$, $t\in[u_0,1]$, $x_0\in\R^d$, one has the identity $\what X^{s_0,x_0}_t(\omega)=\what X^{u_0,\what X^{s_0,x_0}_{u_0}(\omega)}_t(\omega)$;
\item For all $\theta\in(0,1)$ and $R<\infty$ there exists a constant $N(\omega,\theta,R)$ such that for all $s_0\in[0,1)\cap\Q$, $t\in[s_0,1]$, $x_0,y_0\in\R^d$, $|x_0|,|y_0|\leq R$, one has $|\what X^{s_0,x_0}_t(\omega)-\what X^{s_0,y_0}_t(\omega)|\leq N |x_0-y_0|^\theta$.
\end{enumerate}
\end{theorem}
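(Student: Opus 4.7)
My plan is a three-step strategy: approximate to get strong existence, Kolmogorov to get a jointly continuous modification, and uniqueness to glue in the semiflow identity. For existence, I would choose a sequence of smooth $b^n$ with $\|b^n\|_{\cC^\alpha}\leq M$ and $\|b^n-b\|_{\cC^0}\to 0$. Classical Young theory gives unique strong solutions $X^n$ of \eqref{eq:main} driven by $(b^n,\sigma)$. Lemma \ref{lem:stability} applied with $(b,\tilde b)=(b^n,b^m)$ and the same initial condition bounds $\|X^n_{\cdot\wedge\tau_K}-X^m_{\cdot\wedge\tau_K}\|_{\scC^{1/2}_p([s_0,1])}$ by $N_1^{C^{2-\gamma}}$ times the sum of $(\P([B^H]_{\cC^{H^-}|\bF}\geq C))^{1/2p}$ and $\|b^n-b^m\|_{\cC^0}$. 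Fernique's theorem, as in Corollary \ref{cor:initial-stability}, makes the probability term decay like $e^{-\beta C^2/(2p)}$, so sending $n,m\to\infty$ and then $C\to\infty$ makes the sequence Cauchy. Its limit $X^{s_0,x_0}$ satisfies the stopped equation---the uniform $\cC^{H_-}$ bound of Lemma \ref{lem:apriori-K} together with continuity of Young integration in the integrand's $\cC^{H_-}$ topology let the integral pass to the limit---and $\tau_K\uparrow 1$ on $\Omega_H$ removes the stopping, giving the strong solution.

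For the modification, I would apply Kolmogorov's continuity theorem for each rational $s_0$ to the $\cC([s_0,1\wedge\tau_K];\R^d)$-valued random variable $x_0\mapsto X^{s_0,x_0}_{\cdot\wedge\tau_K}$, using the moment bound of Corollary \ref{cor:initial-stability} with $p$ large and $\theta$ close to $1$. This yields a jointly continuous modification in $(x_0,t)$ on $B_R$ with a H\"older bound of the form required by (iii). A diagonal intersection over $s_0\in\Q\cap[0,1)$, $K,R\in\N$, and a sequence $\theta_n\uparrow 1$, further intersected with $\Omega_H$, produces the event $\what\Omega$ on which (i) holds by construction. For the semiflow identity (ii), note that strong uniqueness follows from Lemma \ref{lem:stability} with $b=\tilde b$, $x_0=y_0$, since the same Fernique argument forces $\|X-Y\|_{\scC^{1/2}_p}=0$. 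Hence for each rational triple $(s_0,u_0,x_0)$ with $s_0\leq u_0$ and $x_0\in\Q^d$, the two candidate paths $t\mapsto\what X^{s_0,x_0}_t$ and $t\mapsto\what X^{u_0,\what X^{s_0,x_0}_{u_0}}_t$ on $[u_0,1]$ solve \eqref{eq:main} from the same initial datum at $u_0$ and thus coincide a.s.; a countable intersection and the continuity from the previous step extend the identity to all $x_0\in\R^d$.

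The delicate step is producing (iii) with a constant $N(\omega,\theta,R)$ that is independent of $s_0$. A naive application of Kolmogorov's criterion for each rational $s_0$ separately produces random H\"older constants $N_{s_0}(\omega)$ whose moments are uniform in $s_0$, but whose almost-sure supremum over the countable set $\Q\cap[0,1)$ is not immediately finite. I would address this either by upgrading Kolmogorov to the joint field $(s_0,x_0)\mapsto X^{s_0,x_0}$, which requires an auxiliary stability estimate in $s_0$ derivable by rerunning the buckling scheme of Lemma \ref{lem:stability} on two solutions started at nearby times, or by using the semiflow identity itself to transport a single H\"older constant established at a reference initial time to every rational $s_0$.
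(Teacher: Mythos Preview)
Your strategy matches the paper's: smooth approximation plus Lemma~\ref{lem:stability} for the Cauchy property, pass to the limit for strong existence, then Corollary~\ref{cor:initial-stability} together with Kolmogorov for the continuous modification and (iii). The one substantive difference is your argument for the semiflow identity (ii). You invoke strong uniqueness at the \emph{limit} level: both $\what X^{s_0,x_0}|_{[u_0,1]}$ and $\what X^{u_0,\what X^{s_0,x_0}_{u_0}}$ solve \eqref{eq:main} on $[u_0,1]$ from the same datum, hence coincide. This is more direct than the paper's route, but note that the datum $\what X^{s_0,x_0}_{u_0}$ is random and $\cF_{u_0}$-measurable, whereas Corollary~\ref{cor:initial-stability} is stated for deterministic $x_0=y_0$; you should remark that the proof of Lemma~\ref{lem:stability} goes through verbatim for bounded $\cF_{s_0}$-measurable initial data (nothing there uses constancy of $x_0,y_0$). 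The paper instead establishes the semiflow at the \emph{approximation} level $X^n$, where it is classical, and then upgrades the pointwise-in-$x_0$ convergence $X^{n;u_0,x_0}\to\what X^{u_0,x_0}$ to locally uniform convergence in $x_0$ (via Kolmogorov on the uniform-in-$n$ H\"older bound from Corollary~\ref{cor:initial-stability}), which is what is needed to substitute the random argument $X^{n;s_0,x_0}_{u_0}\to\what X^{s_0,x_0}_{u_0}$ and pass (ii) to the limit.

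Your flag on (iii) is well taken: Kolmogorov for each rational $s_0$ separately produces random H\"older constants $N_{s_0}(\omega)$ with $L_p$ moments bounded uniformly in $s_0$, but a countable supremum of such variables is not automatically finite almost surely. The paper is indeed terse here, simply asserting that \eqref{eq:semiflow bound} implies (iii). Of your two proposed repairs, the first---proving a stability estimate in the initial time $s_0$ by rerunning the buckling of Lemma~\ref{lem:stability} for solutions started at nearby times, then applying Kolmogorov jointly in $(s_0,x_0)$---is the right one and goes through. The second, transporting a H\"older constant from a reference time via the semiflow, does not work as stated: expressing $\what X^{s_0,x_0}_t$ through $\what X^{0,\cdot}$ via (ii) would require $x_0$ to lie in the range of $z\mapsto\what X^{0,z}_{s_0}$, and even then the H\"older bound relates $|\what X^{0,x}_t-\what X^{0,y}_t|$ to $|x-y|$ rather than to $|x_0-y_0|=|\what X^{0,x}_{s_0}-\what X^{0,y}_{s_0}|$, which goes the wrong way.
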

\begin{proof}
We start with the first statement, so let us fix $s_0\in[0,1)$, $x_0\in\R^d$.
Let $\beta \in(1-H^-,1/2)$ and let us take $p\geq1$ large enough  so that 
\begin{equs}   \label{eq:embed}
\|\cdot\|_{L_p\big(\Omega; \cC^\beta([0,1])\big)}\lesssim \|\cdot\|_{\scC^{1/2}_p([0,1])} 
\end{equs}
holds for continuous processes.
Since $b \in \cC^\alpha$ and $\alpha>0$, there exists a sequence $(b^n)_{n=1}^\infty  \subset  \cC^1$  such that   $b^n \to b$ uniformly  as $n \to \infty$.  Let $X^n$ denote the strong solution of
\begin{equs}      \label{eq:approx-Xn}
X^n_t =x_0 + \int_{s_0}^t  b^n (X^n_s ) \, ds + \int_{s_0}^t \sigma(X^n_s) \, dB^H_s.
\end{equs}
The existence and uniqueness of $X^n$ is well-known.
By Lemma \ref{lem:stability}, it follows that for each $K\in\N$,  the sequence $(X^n_{\cdot  \wedge \tau_K})_{n=1}^\infty$ is Cauchy in $\scC^{1/2}_p([s_0,1])$,
and hence by \eqref{eq:embed}, in $L_p\big(\Omega; \cC^\beta([s_0,1])\big)$ as well.
  Consequently, there exists an adapted process $X$ such that,  for each $K\in\N$,  $X_{\cdot \wedge \tau_K} \in L_p\big(\Omega; \cC^\beta([s_0,1])\big)$ and $\lim_{n \to \infty} \| X_{\cdot \wedge \tau_K}-X^n_{\cdot \wedge \tau_K}\| _{L_p\big(\Omega; \cC^\beta([s_0,1])\big)}=0$.  Notice that this and the fact that almost surely, $\tau_K=1$ for $K$ large enough, implies that $\| X^n-X\|_{\cC^\beta([s_0, 1])} \to 0$ in probability, as $n \to \infty$. 
 By the uniform convergence of $b_n$ to $b$ we get immediately
 \begin{equs}
 \Big\| \int_{s_0}^\cdot  b^n (X^n_s ) \, ds- \int_{s_0}^\cdot  b (X_s ) \, ds \Big\|_{\cC^0([s_0, 1])}\to 0
 \end{equs}
 in probability.
 By  \eqref{eq:Young continuity} and the regularity of $\sigma$ we see that 
 \begin{equs}
  \Big\|  \int_{s_0}^\cdot  \sigma (X^n_s ) \, dB^H_s- \int_{s_0}^\cdot  \sigma  (X_s ) \, dB^H_s \Big\|_{\cC^{H^{-}}([s_0, 1])} &  \lesssim \|  \sigma (X^n_\cdot )-\sigma  (X_\cdot ) \|_{\cC^\beta([s_0, 1])} [B^H]_{ \cC^{H^{-}}([s_0, 1])}
  \\
&   \lesssim \|X^n-X  \|_{\cC^\beta([s_0, 1])} [B^H]_{ \cC^{H^{-}}([s_0, 1])}\to 0
 \end{equs}
in probability.
Hence we can pass to the limit in \eqref{eq:approx-Xn} and get that $X$ is a strong solution.

Now we vary $s_0$ and $x_0$ so let us also include them in the notation. Fix $\theta\in(0,1)$ and take $p$ large enough so that $\bar \theta:=\theta+2/p<1$. From Corollary \ref{cor:initial-stability} and \eqref{eq:embed} we get that for all $x_0,y_0\in\R^d$ and $K\in\N$ one has
\begin{equ}
\big\|\|X^{s_0,x_0}_{\cdot\wedge\tau_K}-X^{s_0,y_0}_{\cdot\wedge\tau_K}\|_{\cC^\beta([s_0,1])}\big\|_{L_p(\Omega)}\lesssim |x_0-y_0|^{\bar\theta}.
\end{equ}
Therefore, by (again) applying Kolmogorov's continuity criterion, we get a modification $\what X$ of $X$: a random field $(\what X^{s_0,x_0})_{x_0\in\R^d}$ that takes values in $\cC^\beta([s_0,1])$ and satisfies for all $K$ and $R$
\begin{equ}\label{eq:semiflow bound}
\Bigg\|\sup_{\substack{x_0\neq y_0\\|x_0|,|y_0|\leq R}}\frac{\|\what X^{s_0,x_0}_{\cdot\wedge\tau_K}-\what X^{s_0,y_0}_{\cdot\wedge\tau_K}\|_{\cC^\beta([s_0,1])}}{|x_0-y_0|^\theta}\Bigg\|_{L_p(\Omega)}\lesssim 1.
\end{equ}
We claim that this modification satisfies (i)-(iii). By continuity of $\what X$ as well as the regularity of the coefficients, to verify (i) it suffices to check that for any $s_0 \in \mathbb{Q},x_0\in \mathbb{Q}^d$, $t\mapsto \what X^{s_0,x_0}_t$ is a.s. a solution, which is immediate. The bound \eqref{eq:semiflow bound} implies (iii). As for (ii), by countability it is sufficient to check for any two fixed $s_0, u_0$, and by continuity, for any fixed $x_0$, the claimed identity holds a.s. for all $t\in[u_0,1]$. This follows from the limiting procedure: the solution to \eqref{eq:approx-Xn} is path-by-path unique, and since both $X^{n;s_0,x_0}_\cdot$ and $X^{n;u_0, X^{n;s_0,x_0}_{u_0}}_\cdot$ are a.s. solutions of the approximate equation on $[u_0,1]$ with initial time $u_0$ and initial condition $X^{n;s_0,x_0}_{u_0}$, they coincide a.s. for all $t\in[u_0,1]$.  Next, notice that from Corollary \ref{cor:initial-stability} and \eqref{eq:embed} we get for all $R$, $K$, and $p$, 
\begin{equs}
\sup_n \sup_{\substack{x_0\neq y_0\\|x_0|,|y_0|\leq R}}\frac{\| X^{n;u_0,x_0}_{\cdot\wedge\tau_K}- X^{n; u_0,y_0}_{\cdot\wedge\tau_K}\|_{L_p(\Omega; \cC^\beta([u_0,1]))}}{|x_0-y_0|^\theta} < \infty .
\end{equs}
The above, combined with the pointwise convergence  $\lim_{n \to \infty} \| X^{n; u_0,x_0}_{\cdot\wedge\tau_K}- \what X^{u_0,x_0}_{\cdot\wedge\tau_K}\|_{L_p(\Omega; \cC^\beta([u_0,1]))}=0$, for $x_0 \in \R^d$, implies that  $X^{n; u_0, \cdot}_{\cdot\wedge\tau_K} \to  \what X^{u_0,\cdot }_{\cdot\wedge\tau_K} $ in the space $\cC^{\theta/2} \big( B_R; L_p(\Omega; \cC^\beta([u_0,1]))\big)$,  where $B_R$ is the ball of radius $R$ in $\R^d$.  Again by Kolmogorov's continuity criterion, we can conclude that the convergence also takes place in the space $L_p\big(\Omega; \cC^{\theta/3} (B_R; \cC^\beta([u_0,1]))\big)$.  Since $R, K$ were arbitrary, this combined with the fact that $X^{n;s_0,x_0}_{u_0}$  have moments of any order uniformly bounded in $n \in \mathbb{N}$,  implies that 
$X^{n;u_0, X^{n;s_0,x_0}_{u_0}}_{\cdot} \to X^{u_0, X^{s_0,x_0}_{u_0}}_{\cdot }$   in $\cC^\beta([u_0,1])$, in probability. Recall that also $X^{n;s_0,x_0}_\cdot \to X^{s_0,x_0}_\cdot$ in $\cC^\beta([s_0,1])$, in probability.  Since $X^{n;s_0,x_0}_\cdot$ and $X_\cdot^{n;u_0, X^{n;s_0,x_0}_{u_0}}$ coincide, the claim follows. 
\end{proof}

\subsection{Path-by-path uniqueness}
Now we see the advantage of the more general existence formulation in Theorem \ref{thm:exist-Young}: one can deduce the path-by-path uniqueness of solutions.

\begin{theorem}\label{thm:PBP-uniq-Young}
Let $b$ and $\sigma$ satisfy $\mathbf{A}_{\mathrm{Young}}$ and take $\what\Omega\subset\Omega_H$ obtained from Theorem \ref{thm:exist-Young}.
Let $\omega\in \what\Omega$, $s_0\in[0,1)$, $x_0\in\R^d$, and let $Y,Y'$ be solutions of \eqref{eq:main}. Then $Y=Y'$.
\end{theorem}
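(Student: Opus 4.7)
The plan is to use the Hölder-continuous solution semiflow $\what X$ built in Theorem~\ref{thm:exist-Young} to reduce path-by-path uniqueness to a local comparison argument. Fix $\omega\in\what\Omega$ and let $Y$ be any solution on $[s_0,1]$ with $Y_{s_0}=x_0$; by symmetry it suffices to show $Y=\what X^{s_0,x_0}(\omega)=:Z$, for then the same reasoning applied to $Y'$ gives $Y=Y'$.

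I would first treat the case $s_0\in\mathbb{Q}$; the irrational case is recovered at the end by taking $s_0^n\in\mathbb{Q}$ with $s_0^n\uparrow s_0$ and using the continuity of $\what X^{s_0^n,x_0}_t(\omega)\to\what X^{s_0,x_0}_t(\omega)$ that follows from the approximation arguments in the proof of Theorem~\ref{thm:exist-Young}. For rational $s_0$ define $\tau^*=\sup\{t\in[s_0,1]:Y_r=Z_r\text{ for all }r\in[s_0,t]\}$, which is attained by continuity, and assume for contradiction $\tau^*<1$. Pick rational $u_n\in[s_0,\tau^*]$ with $u_n\uparrow\tau^*$. Since $Y_{u_n}=Z_{u_n}=\what X^{s_0,x_0}_{u_n}(\omega)$, the semiflow identity (ii) yields
\[
Z_t=\what X^{u_n,\what X^{s_0,x_0}_{u_n}}_t(\omega)=\what X^{u_n,Y_{u_n}}_t(\omega)\quad\text{for all }t\in[u_n,1].
\]
Combining the H\"older continuity (iii) in the initial datum with the a priori bound of Lemma~\ref{lem:apriori-K}, which controls the $\cC^{H_-}$-regularity of the flow uniformly in $(u,v)$ on compacts, a passage to the limit $u_n\uparrow\tau^*$ gives $Z_t=\what X^{\tau^*,Y_{\tau^*}}_t(\omega)$ for $t\in[\tau^*,1]$. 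Thus $Y|_{[\tau^*,1]}$ and $\what X^{\tau^*,Y_{\tau^*}}(\omega)$ are two solutions from $(\tau^*,Y_{\tau^*}(\omega))$ at the same $\omega$, and a contradiction will arise as soon as we show they must coincide in a right-neighborhood of $\tau^*$.

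The remaining step is local uniqueness at $(\tau^*,Y_{\tau^*})$ for this fixed $\omega$, which I would obtain by revisiting the sewing/buckling estimate from Lemma~\ref{lem:stability} on a short window $[\tau^*,\tau^*+\delta]$. On a short enough interval the factor $|T-U|^{1/2+\eps_1}$ appearing in \eqref{eq:buckling} is strictly less than $(2N_0C)^{-1}$, so the buckling already closes after one interval and the analogue of \eqref{eq:iterable} forces the difference $Y-\what X^{\tau^*,Y_{\tau^*}}$ to vanish in $\scC^{1/2}_p$, hence pointwise, on $[\tau^*,\tau^*+\delta]$; this contradicts the definition of $\tau^*$. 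The main obstacle is that Lemma~\ref{lem:stability} is formulated for adapted (strong) solutions, whereas $Y$ is only a path-by-path solution for fixed $\omega$. I would circumvent this by approximating with smooth drifts $b^n\to b$: the equation with $b^n$ is path-by-path uniquely solvable for each $\omega$ by classical Young theory, and the strong solutions $X^n$ converge to $\what X^{s_0,x_0}$ by the proof of Theorem~\ref{thm:exist-Young}; the estimates underlying Lemma~\ref{lem:stability} rely only on the pathwise H\"older regularity of $B^H$ and on the stopping times $\tau_K$, both of which are pointwise statements on a full-measure subset of $\what\Omega$, and so they transfer to a pathwise comparison between $Y$ and $X^n(\omega)$, then in the limit to the comparison with $\what X^{\tau^*,Y_{\tau^*}}(\omega)$.
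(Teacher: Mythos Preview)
Your argument has a genuine gap at the last step. The estimates in Lemma~\ref{lem:stability} do \emph{not} rely only on pathwise H\"older regularity of $B^H$: they go through Lemma~\ref{lem:Regularisation}, which in turn is proved by stochastic sewing (Lemma~\ref{lem:SSL}). That lemma needs the germ $A_{s,t}$ to be $\cF_t$-measurable, it takes conditional expectations $\E^{s_-}$, and the output is an $L_p(\Omega)$ bound. None of this makes sense for a fixed $\omega$, and the process $Z=Y-\what X^{\tau^*,Y_{\tau^*}}$ you want to control is not adapted (since $Y$ is merely a path-by-path solution). The regularisation effect is genuinely probabilistic; a single realisation $B^H(\omega)$ does not by itself produce the gain in \eqref{eq:one-big-bound}. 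Your proposed detour through smooth $b^n$ does not help either: classical Young theory lets you compare $X^n(\omega)$ with $\what X$, but gives you no handle on $Y(\omega)$ versus $X^n(\omega)$ beyond the irregular drift $b$.

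The paper avoids this obstacle entirely by a different, purely deterministic local argument (attributed to \cite{shap}). For fixed rational $t$ one defines $f(s)=\what X^{s,Y_s}_t$ on $[s_0,t]\cap\Q$ and shows $f$ is constant. Properties (ii) and (iii) of the semiflow give $|f(r)-f(u)|\lesssim |Y_r-\what X^{u,Y_u}_r|^\theta$, and the right-hand side is estimated using only that both $r\mapsto Y_r$ and $r\mapsto\what X^{u,Y_u}_r$ solve the equation from $(u,Y_u)$: the Young remainder bound \eqref{eq:Young remainder} yields $|S_{u,r}|\lesssim|r-u|^{\beta+\bar H}$ with $\beta+\bar H>1$, and a one-step bootstrap on the drift gives $|D_{u,r}|\lesssim|r-u|^{1+\alpha}$. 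Hence $|f(r)-f(u)|\lesssim|r-u|^{\theta\cdot\kappa}$ with $\kappa>1$, so $f$ is constant. All the regularisation has already been absorbed, once and for all, into the pathwise H\"older continuity (iii) of the semiflow on $\what\Omega$; no probabilistic argument is needed at this stage. (Two smaller points: for irrational $s_0$ one should take $s_0^n\searrow s_0$, not $\uparrow$, so that $Y$ is defined at $s_0^n$; and $\what X^{s_0,\cdot}$ is only constructed for rational $s_0$, so one compares $Y$ and $Y'$ directly via $|\what X^{s_0^n,Y_{s_0^n}}_t-\what X^{s_0^n,Y'_{s_0^n}}_t|\lesssim|Y_{s_0^n}-Y'_{s_0^n}|^\theta\to0$.)
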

\begin{proof}
The proof is inspired by \cite{shap}.
By continuity, it suffices to show that if $s_0<t\in\Q$, then $Y_t=\what X^{s_0,x_0}_t(\omega)$ on rationals. Indeed, extending to arbitrary $t$ is trivial, while for arbitrary $s_0$ take a sequence $(s_0^n)_{n\in\N}\subset\Q$ such that $s_0^n\searrow s_0$ and let $n\to\infty$ in the bound
\begin{equ}
|Y_t-Y'_t|=|\what X_t^{s_0^n,Y_{s_0^n}}(\omega)-\what X_t^{s_0^n,Y_{s_0^n}'}(\omega)|\lesssim|Y_{s_0^n}-Y'_{s_0^n}|^\theta.
\end{equ} 
Since $\omega$ is fixed throughout the proof, we drop it from the notation. For $s\in[s_0,t]\cap\Q$ define $f(s)=\what X^{s,Y_s}_t$. Clearly $f(s_0)=\what X^{s_0,x_0}_t$ and $f(t)=Y_t$, so it suffices to show that $f$ is constant. Using properties (ii) and (iii) of $\what X$, we can write for any $u,r\in[s_0,t]\cap\Q$, $u\leq r$, $\theta\in(0,1)$:
\begin{equ}\label{eq:PBP-1}
|f(r)-f(u)|=\big|\what X^{r,Y_r}_t-\what X^{u,Y_u}_t\big|=\big|\what X^{r,Y_r}_t-\what X^{r,y}_t|_{y=\what X^{u,Y_u}_r}\big|\lesssim |Y_r-\what X^{u,Y_u}_r|^\theta.
\end{equ}
Here and below the proportionality constant in $\lesssim$ may depend on anything other than the time parameters.
By property (i) of $\what X$ and the assumption on $Y$, both $r\mapsto\what X^{u,Y_u}$ and $r\mapsto Y_r$ are solutions of \eqref{eq:main} with $s_0=u$ and $x_0=Y_u$.
Therefore we have for all $v\in[u,r]$
\begin{equs}
Y_v-\what X^{u,Y_u}_v&=D_{u,v}+S_{u,v},
\end{equs}
where
\begin{equs}
D_{u,v}&=\int_u^v b(Y_s)-b(\what X^{u,Y_u}_s)\,ds,
\\
S_{u,v}&=\int_u^v \sigma(Y_s)-\sigma(\what X^{u,Y_u}_s)\,dB^H_s
\\
&=
\int_u^v \sigma(Y_s)-\sigma(\what X^{u,Y_u}_s)\,dB^H_s-\big(\sigma(Y_u)-\sigma(\what X^{u,Y_u}_u)\big)(B^H_v-B^H_u).
\end{equs}
Note that by the definition of a solution, both of the processes $Y_{\cdot}$ and $\what X^{u,Y_u}_{\cdot}$ are regular enough to make the Young integral $S_{u,v}$ well-defined: they belong to $\cC^\beta$ for some $\beta>1-H$.
So choosing $\bar H\in(1-\beta,H)$, we have $B^H\in \cC^{\bar H}$ and 
from Young integration we immediately get $|S_{u,v}|\lesssim |v-u|^{\beta+\bar H}$.
Furthermore, the boundedness of $b$ implies $|D_{u,v}|\lesssim |v-u|$.
%\begin{equ}
%|D_{u,v}|\lesssim |v-u|,\qquad |S_{u,v}|\lesssim |v-u|^{2H}.
%\end{equ}
In particular we get $|Y_s-\what X^{u,Y_u}_s|\lesssim |s-u|$, which, combined with the $\alpha$-H\"older continuity of $b$, we can use to upgrade the bound for $D$ to
$
|D_{u,v}|\lesssim |v-u|^{1+\alpha}.
$
Using these bounds with $r$ in place of $v$ and substituting in \eqref{eq:PBP-1}, we get
\begin{equ}
|f(r)-f(u)|\lesssim |r-u|^{\theta((1+\alpha)\wedge (\beta+\bar H))}.
\end{equ}
Since $\alpha>0$, $\beta+\bar H>1$, and $\theta$ can be chosen arbitrarily close to $1$, the exponent on the right-hand side can be made to be greater than $1$, and hence $f$ is indeed constant.
\end{proof}

\section{The rough case}
Throughout the section we fix $H\in(1/3,1/2]$ and assume that the random rough path lift $(B^H,\bB^H)$ of $B^H$ satisfies $\mathbf{A}_{\mathrm{rough}}$ (ii).
In addition, up until Section \ref{sec:weak} we fix $\alpha\in(0,1)$.
\subsection{A priori bound}
Fix $$1/3<H_- < H^-< H$$ (in particular, $2H_-+H^- >1$) such that $2H_--2H+\alpha H>0$ and $3H_--H+\alpha H>1/2$ (as before we consider $H_-, H^-$ as functions of $\alpha,H$).
%{\color{blue}Need:
%\begin{equ}\label{eq:rough-exponent-weird}
%2H_--H+\alpha H>1/2,
%\end{equ}
%which will be possible thanks to our funny condition $\alpha>1/2H-1$.}

For all $K \in\N$, define the stopping time 
\begin{equs} 
\tau_K= \inf\{ t >0 : \big[(B^H,\bB^H)\big]_{\cR^{H^-}([0, t])} \geq K  \}. 
\end{equs}
Note that by assumption $t\mapsto\big[(B^H,\bB^H)\big]_{\cR^{H^-}([0, t])}$ is adapted and continuous, so $\tau_K$ is indeed a stopping time. Moreover, for all $\omega\in\Omega_H$ there is a $K_0(\omega)$ such that $\tau_K=1$ for all $K\geq K_0$. To (slightly) ease the notation, we set 
\begin{equs}\label{eq:cD}
\scD^\gamma_K([S, T]):= \cD^\gamma_{B^H_{\cdot \wedge \tau_K}}([S, T]).
\end{equs}
\begin{lemma}                                                                         \label{lem:apriori-rough}
Let $s_0\in[0,1)$, $x_0\in\R^d$, $b\in \cC^0$ and $\sigma\in \cC^2$.
Then,   there exists a constant $N_0=N_0(K, \| b\|_{\cC^0}, \|\sigma\|_{\cC^2},\alpha,H,d, d_0)$,  such that if $X$ is a solution of \eqref{eq:main} then almost surely
\begin{equ}
\,[(X_{ \cdot \wedge \tau_K},  \sigma(X_{ \cdot \wedge \tau_K})]_{\scD^{2H_-}_K([s_0, 1])} \leq N_0.
\end{equ}

\end{lemma}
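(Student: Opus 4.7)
The plan is to mirror the Young-case a priori bound (Lemma~\ref{lem:apriori-K}): first establish a buckling inequality for $\Phi(U,T):=[(X_{\cdot\wedge\tau},\sigma(X_{\cdot\wedge\tau}))]_{\scD^{2H_-}_K([U,T])}$ on sufficiently short intervals, and then glue via the subadditivity bound \eqref{eq:sum_partition}. Throughout I write $\tau=\tau_K$. Since quantities on $[\tau,1]$ become constant (both $X_{\cdot\wedge\tau}$ and $B^H_{\cdot\wedge\tau}$), it suffices to control increments with $s,t\leq\tau$; the remaining cases follow by monotonicity in $|t-s|$.

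Fix $(U,T)\in[s_0,1]^2_\leq$ and $(s,t)\in[U,T]^2_\leq$ with $t\leq\tau$. Writing $X_t-X_s=(D_t-D_s)+(S_t-S_s)$ with $D$ the drift integral and $S$ the rough integral, one has $|D_t-D_s|\leq\|b\|_{\cC^0}|t-s|$. For the rough piece, apply the remainder estimate \eqref{eq:rough remainder} to the controlled path $(\sigma(X),\nabla\sigma(X)\sigma(X))\in\cD^{2H_-}_{B^H}$:
\begin{equs}
|S_t-S_s-\sigma(X_s)(B^H_t-B^H_s)-\nabla\sigma(X_s)\sigma(X_s)\,\bB^H_{s,t}|\lesssim |t-s|^{H^-+2H_-}\Psi(s,t)\cdot K,
\end{equs}
where $\Psi(s,t):=[(\sigma(X),\nabla\sigma(X)\sigma(X))]_{\cD^{2H_-}_{B^H}([s,t])}$ and $[(B^H,\bB^H)]_{\cR^{H^-}([s,t])}\leq K$ on $[0,\tau]$. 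The extra Gubinelli term is bounded by $|\nabla\sigma(X_s)\sigma(X_s)\,\bB^H_{s,t}|\lesssim M^2 K|t-s|^{2H^-}$. Combining and dividing by $|t-s|^{2H_-}$, the controlled-remainder part of $\Phi$ is at most
\begin{equs}
\|b\|_{\cC^0}+M^2 K+K|T-U|^{H^-}\Psi(U,T).
\end{equs}

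Next, apply the composition estimate \eqref{eq:quadratic-controled-estimate} with $F=\sigma$, $f=X$, $f'=\sigma(X)$, $\gamma=2H_-$, $\alpha=H^-$, and any $\beta\in[0,1]$ satisfying $\beta H^-+H^-\geq 2H_-$. Since $\|\sigma(X)\|_{\cC^0}\leq M$ and $[B^H]_{\cC^{H^-}([s,t])}\leq K$, this yields
\begin{equs}
\Psi(U,T)\lesssim M\Phi(U,T)+MK(1+\Phi(U,T)^\beta+K^\beta).
\end{equs}
For the Gubinelli-derivative seminorm $[\sigma(X_{\cdot\wedge\tau})]_{\cC^{2H_--H^-}([U,T])}$, use the composition bound $[\sigma(X)]_{\cC^{H^-}}\leq M[X]_{\cC^{H^-}}$ together with the trivial embedding \eqref{eq:triv-controll-embedding}:
\begin{equs}
[X_{\cdot\wedge\tau}]_{\cC^{H^-}([U,T])}\lesssim \Phi(U,T)|T-U|^{2H_--H^-}+MK,
\end{equs}
and interpolate the $|T-U|^{2H^--2H_-}$ factor to downgrade from $\cC^{H^-}$ to $\cC^{2H_--H^-}$. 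Summing both contributions,
\begin{equs}
\Phi(U,T)\leq N_1(1+K+M^2K)+N_1K|T-U|^{H^-}\bigl(\Phi(U,T)+K\Phi(U,T)^\beta+K^{1+\beta}+M\bigr)
\end{equs}
for some constant $N_1$ depending on $M,\alpha,H$.

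The hard part—but entirely routine once set up—is closing this inequality. Picking $|T-U|^{H^-}\leq\epsilon$ with $\epsilon=\epsilon(K,M)$ small enough, the linear $\Phi$ term on the right is absorbed into the left; Young's inequality with exponents $1/\beta,1/(1-\beta)$ converts $K^{2}\epsilon\Phi^\beta$ into $\tfrac14\Phi+C(K^2\epsilon)^{1/(1-\beta)}$, which is absorbed as well. One concludes $\Phi(U,T)\leq N_2(K,M,\alpha,H,d,d_0)$ on every subinterval of length at most $\epsilon^{1/H^-}$. Finally, cover $[s_0,1]$ by $\lceil\epsilon^{-1/H^-}\rceil$ such subintervals $[u_{i-1},u_i]$ and invoke \eqref{eq:sum_partition}: the second sum therein is $\sum_{i}[\sigma(X_{\cdot\wedge\tau})]_{\cC^{2H_--H^-}([u_{i-1},u_i])}[B^H_{\cdot\wedge\tau}]_{\cC^{H^-}([u_i,u_k])}\leq kN_2\cdot K$, which yields the desired constant $N_0=N_0(K,\|b\|_{\cC^0},\|\sigma\|_{\cC^2},\alpha,H,d,d_0)$.
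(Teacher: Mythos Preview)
Your proposal is correct and follows the same buckle-then-glue strategy as the paper. The paper streamlines the execution by first applying \eqref{eq:controll-increasing} to pass from $\scD^{2H_-}_K$ to $\scD^{2H^-}_K$ (extracting the smallness factor $|T-U|^{2(H^--H_-)}$), then using \eqref{eq:rough continuity} together with \eqref{eq:quadratic-controled-estimate} at $\beta=1$ to obtain a directly linear inequality $\Phi\leq \|b\|_{\cC^0}+N(\Phi+1)|T-U|^{2(H^--H_-)}$, which avoids your separate handling of the remainder and Gubinelli-derivative parts and the extra Young-inequality step.
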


\begin{proof}
Let $s_0 \leq U <T \leq 1$ and as before, denote by $D$ and $S$ the first and second integrals in \eqref{eq:main}, respectively. For $(s, t) \in [U, T]_{\leq
}$, we have the trivial estimate 
\begin{equs} 
\, [( D_{\cdot \wedge \tau_K}, 0)]_{\scD^{2H_-}_K([U,T])} = [ D_{\cdot \wedge \tau_K}]_{C^{2H_-}([U, T])} \leq \| b\|_{\cC^0} .     \label{eq:est-drift}
\end{equs}
Further, by \eqref{eq:controll-increasing} we have 
\begin{equs}
\, [( S_{\cdot \wedge \tau_K}, \sigma(X_{ \cdot \wedge \tau_K})]_{\scD^{2H_-}_K([U, T])}&   \leq [( S_{\cdot \wedge \tau_K}, \sigma(X_{ \cdot \wedge \tau_K})]_{\scD^{2H^-}_K([U, T])}  |T-U|^{2(H^--H_-)}
\end{equs}
By  \eqref{eq:rough continuity} applied with $\alpha= H^-$ and $\gamma= 2H^-$,  and by  the definition of $\tau_K$,  we get 
\begin{equs} 
 \,  [( S_{\cdot \wedge \tau_K}, \sigma(X_{ \cdot \wedge \tau_K})]_{\scD^{2H^-}_K([U, T])}  & \lesssim  [( \sigma(X_{ \cdot \wedge \tau_K}), \nabla\sigma\sigma(X_{ \cdot \wedge \tau_K}) ]_{\scD^{2H_-}_K([U, T])} 
 \end{equs}
By  \eqref{eq:quadratic-controled-estimate} with $\beta=1$, the bounds on $\sigma$, and the definition of $\tau_K$,  we obtain 
 \begin{equs}
\,  [&( \sigma(X_{ \cdot \wedge \tau_K}), \nabla\sigma\sigma(X_{ \cdot \wedge \tau_K}) ]_{\scD^{2H_-}_K([U, T])} 
  \lesssim \big(  [ (X_{ \cdot \wedge \tau_K},  \sigma(X_{ \cdot \wedge \tau_K})]_{\scD^{2H_-}_K([U, T])} +1 \big) 
\end{equs}
Consequently, from the three estimates above we obtain
\begin{equs}
\,  [&( S_{\cdot \wedge \tau_K}, \sigma(X_{ \cdot \wedge \tau_K})]_{\scD^{2H_-}_K([U, T])}
\\  &\leq N    \big(   [ (X_{ \cdot \wedge \tau_K},  \sigma(X_{ \cdot \wedge \tau_K})]_{\scD^{2H_-}_K([U, T])} +  1  )  |T-U |^{2(H^--H_-)},      \label{eq:est-stoch}
\end{equs}
with $N=N(K,\| \sigma\|_{\cC^2}, H, d, d_0)$.  By \eqref{eq:est-drift} and \eqref{eq:est-stoch}, we get 
\begin{equs}
\,  [&(X_{ \cdot \wedge \tau_K},  \sigma(X_{ \cdot \wedge \tau_K})]_{\scD^{2H_-}_K([U, T])}
\\
%  &\leq \| b\|_{\cC^0} +N   \big(   [ (X_{ \cdot \wedge \tau_K},  \sigma(X_{ \cdot \wedge \tau_K})]_{\scD^{2H_-}_K([U, T])}+  1  \big)  |T-S |^{H^--H_-}.
  &\leq \| b\|_{\cC^0} +N   \big(   [ (X_{ \cdot \wedge \tau_K},  \sigma(X_{ \cdot \wedge \tau_K})]_{\scD^{2H_-}_K([U, T])}+  1  \big)  |T-U |^{2(H^--H_-)}.
\end{equs}
If $|T-U| \leq (2N)^{1/(H^--H_-)}$, then the estimate buckles and we get 
\begin{equs}
\, [(X_{ \cdot \wedge \tau_K},  \sigma(X_{ \cdot \wedge \tau_K})]_{\scD^{2H_-}_K([U, T])} \leq  2 \| b\|_{\cC^0}. 
\end{equs}
Let us now choose $M \in \mathbb{N}$  so that $1/M \leq  (2N)^{1/(H^--H_-)}$ and let us set $S_0=0$ and $S_{n+1} = S_n+ 1/M$ for $n \in {0, \ldots, M-1}$.  Then by  \eqref{eq:sum_partition} and the above inequality applied with $U$ and $T$ replaced by $S_n$ and $S_{n+1}$, we get  
\begin{equs}
\,[&(X_{ \cdot \wedge \tau_K},  \sigma(X_{ \cdot \wedge \tau_K})]_{\scD^{2H_-}_K([s_0,1])}
\\
& \leq \big(([ B^H_{\cdot \wedge \tau_K}]_{\cC^{H^{-}([0, 1])}} +1  \big)^M\sum_{n=0}^{M-1}[(X_{ \cdot \wedge \tau_K},  \sigma(X_{ \cdot \wedge \tau_K})]_{\scD^{2H_-}_K([S_n,  S_{n+1}])}
\\
& \leq (K+1)^M M 2 \| b\|_{\cC^0}  =: N_0. 
\end{equs}
This finishes the proof. 
\end{proof}

\begin{remark}
Notice that  since $2H_-+H^->1$ and $H^-< H_- < 1/2$, it follows that $H^-< 2H_-$.  This in turn implies that $[X_{\cdot \wedge \tau_K} ]_{\cC^{H^-}([s_0, 1])} \leq [(X_{ \cdot \wedge \tau_K},  \sigma(X_{ \cdot \wedge \tau_K})]_{\scD^{2H_-}_K([s_0,1])}(1+[B^H_{ \cdot \wedge \tau_K}])$. In particular, by the previous lemma we get that $[X_{\cdot \wedge \tau_K} ]_{\cC^{H^-}([s_0, 1])} \leq N_1$ for some $N_1= N_1(K, \| b\|_{\cC^0}, \|\sigma\|_{\cC^2},\alpha,H,d, d_0)$. 
\end{remark} 
\subsection{Strong uniqueness and stability}\label{sec:stabil-rough}
Throughout Section \ref{sec:stabil-rough} we fix $K\in\N$ and use the shorthand $\tau=\tau_K$.
Moreover, if $X$ is a solution of \eqref{eq:main} we define the process $\tilde X$ for $t\in[s_0,1]$ as in \eqref{eq:tildeX}.
Let us highlight that the second integral in \eqref{eq:tildeX} is best rewritten as
\begin{equs}\label{eq:stopped-integration}
\int_{s_0}^t\sigma(X_{s\wedge\tau})\,dB^H_s&=\int_{s_0}^{s_0\vee(t\wedge\tau)}\sigma(X_{s\wedge\tau})\,dB^H_{s\wedge\tau}+\sigma(X_\tau)\big(B^H_{t}-B^H_{s_0\vee(t\wedge\tau)}\big)
\\
&=\int_{s_0}^{s_0\vee(t\wedge\tau)}\sigma(X_{s})\,dB^H_{s}+\sigma(X_\tau)\big(B^H_{t}-B^H_{s_0\vee(t\wedge\tau)}\big),
\end{equs}
where the integral on the right-hand sides can be understood in the rough sense.

 \begin{lemma}             \label{lem:Regularisation-rough}
Let  $p \geq 2$, $s_0\in[0,1)$, $x_0\in\R^d$, $(S,T)\in[s_0,1]^2_\leq$, and let $b$ and $\sigma$ satisfy $\mathbf{A}_{\mathrm{rough}}$. Let  $X$ be a strong solution of \eqref{eq:main}.
Then there exists constants $N= N(\alpha, H, d, d_0, M,\lambda, p, K)$ and $\eps=\eps(\alpha,H)>0$ such that for all $f \in \cC^\alpha$, all adapted stochastic processes $Z$, and all $(s,t)\in[S,T]^2_\leq$, the  following bound holds 
\begin{equs}
\Big\| & \int_s^t  \big( f(\tilde{X}_r + Z_r) -f(\tilde{X}_r)\big) \, dr \Big\|_{L_p(\Omega)}
   \leq N  \| f\|_{\cC^\alpha}\| Z\|_{\scC^0_p([S,T])} |t-s|^{1+(\alpha-1)H}
\\
&\quad + N \|f\|_{\cC^\alpha}\Big(\big\|(1+ [ B^H]_{\cC^{H^-}|\bF})Z\big\|_{\scC^0_p([S,T])}
+[Z]_{\scC_p^{H^-}([S,T])}
\Big)|t-s|^{1+2H_--2H+\alpha H}.\label{eq:one-big-bound-rough}
\end{equs}
\end{lemma}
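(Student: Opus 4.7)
The plan is to mirror the proof of Lemma \ref{lem:Regularisation} (the Young case) via the stochastic sewing lemma. I would set up $\Theta_{s,r}$, $\Sigma_s$, $\Xi_{s,r}$, $\hat\Xi_{s,u,r}$ by the same formulas as in the Young case---in particular $\Sigma$ is now the rough integral, but $\Xi$ does \emph{not} include the bracket $\nabla\sigma\sigma(X_{s\wedge\tau})\cdot\bB^H_{s,r}$---and consider the germ and sewn process
\[
A_{s,t}=\E^{s_-}\!\int_s^t\!\big(f(\Xi_{s_-,r}+Z_{s_-})-f(\Xi_{s_-,r})\big)\,dr,\qquad \cA_t=\int_S^t\!\big(f(\tilde X_r+Z_r)-f(\tilde X_r)\big)\,dr.
\]
The reason to keep $\Xi$ in this simple form is that only the linear increment $\sigma(X_{s\wedge\tau})(B^H_r-B^H_s)$ is Gaussian conditional on $\cF_s$, so only this piece can be absorbed via the heat-kernel identity \eqref{eq:very-basic2}; the rough bracket $\bB^H$ is adapted to $\cF_r$ but not Gaussian.

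Conditions \eqref{eq:SSL-cond1} and \eqref{eq:SSL-cond3} of the stochastic sewing lemma transfer essentially verbatim from the Young proof: \eqref{eq:SSL-cond1} uses $[\cP_\Gamma f]_{\cC^1}\lesssim (r-s)^{(\alpha-1)H}$ via \eqref{eq:C1}, \eqref{eq:HKKK} and uniform ellipticity, giving $\|A_{s,t}\|_{L_p}\lesssim\|Z\|_{\scC^0_p}(t-s)^{1+(\alpha-1)H}$, which is the first term of \eqref{eq:one-big-bound-rough}; \eqref{eq:SSL-cond3} uses $[\tilde X]_{\cC^{H_-}}\lesssim 1$ coming from Lemma \ref{lem:apriori-rough} and the trivial bound $\|\Xi_{s_-,r}-\tilde X_{s_-}\|_{L_p}\lesssim(t-s)^H$ (which involves only the linear Euler step in $\Xi$).

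The main work is in verifying \eqref{eq:SSL-cond2}. Split $\E^{s_-}\delta A_{s,u,t}$ into $I^1+I^2$ over $[s,u]$ and $[u,t]$, and $I^2$ further into $J^1+J^2$ exactly as in the Young proof. The estimate of $J^2$ carries over verbatim (it uses only the change-of-kernel bound \eqref{eq:bounding-J2}) and gives $|J^2_r|\lesssim|Z_s|(t-s)^{H_--H+\alpha H}$. For $J^1$ the decomposition
\begin{equs}
\hat\Xi_{s_-,s,r}-\hat\Xi_{s,s,r}&=(\Theta_{s_-,r}-\Theta_{s,r})-\big(\Sigma_s-\Sigma_{s_-}-\sigma(X_{s_-\wedge\tau})(B^H_s-B^H_{s_-})\big)\\
&\quad+(\sigma(X_{s_-\wedge\tau})-\sigma(X_{s\wedge\tau}))(\E^s B^H_r-B^H_s)
\end{equs}
is the same as in the Young proof, and the first and third pieces are bounded by $(t-s)^{1+\alpha H_-}$ and $[B^H]_{\cC^{H^-}|\bF}(t-s)^{H_-+H^-}$ respectively, exactly as before. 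The middle piece is where the Young integration bound must be replaced by a rough one: by \eqref{eq:rough remainder} with $\gamma=2H_-$, combined with Lemma \ref{lem:apriori-rough} (which gives $[(\sigma(X_{\cdot\wedge\tau}),\nabla\sigma\sigma(X_{\cdot\wedge\tau}))]_{\scD^{2H_-}_K}\lesssim 1$) and the stopping-time bound $[(B^H,\bB^H)]_{\cR^{H^-}([0,\tau])}\leq K$,
\[
|\Sigma_s-\Sigma_{s_-}-\sigma(X_{s_-\wedge\tau})(B^H_s-B^H_{s_-})|\lesssim|\bB^H_{s_-,s}|+(t-s)^{H^-+2H_-}\lesssim(t-s)^{2H^-},
\]
where the case $\tau\in[s_-,s]$ is handled via the stopped representation \eqref{eq:stopped-integration}.

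Combining all pieces with $[\cP_\Gamma f]_{\cC^2}\lesssim (t-s)^{(\alpha-2)H}$ (valid since $r\in[u,t]$ forces $r-s\geq(t-s)/2$), and integrating over $r\in[u,t]$, each contribution takes the form ``$L_p$-norm of $Z$'' $\cdot(t-s)^{1+\eps_j}$, and every $\eps_j$ is verified to be at least $2H_--2H+\alpha H$ by elementary inequalities under the constraints $2H_-+H^->1$, $2H_--2H+\alpha H>0$, $3H_--H+\alpha H>1/2$ together with $H^->H_->1/3$ and $H_-,H^-<H\leq 1/2$. Lemma \ref{lem:SSL} then yields \eqref{eq:one-big-bound-rough}. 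The main new obstacle compared with the Young case is the pathwise, non-Gaussian contribution $\nabla\sigma\sigma(X_{s_-\wedge\tau})\bB^H_{s_-,s}$ inside the middle piece: it admits no useful conditional moment bound and must be absorbed through the pathwise constant $K$ from $\tau_K$, producing a $(t-s)^{2H^-}$ factor that has to be matched against the $(t-s)^{(\alpha-2)H}$ heat-kernel loss---this is ultimately what forces $\alpha>0$ in the rough regime (cf.\ Remark \ref{rem:obstacle}).
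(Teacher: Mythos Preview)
Your approach is correct and would yield the stated bound, but it differs from the paper's in one meaningful simplification. In the rough regime $H\leq 1/2$ the power $(\alpha-2)H$ satisfies $(\alpha-2)H>-1$ (since $\alpha>0$), so the heat-kernel factor $(r-u)^{(\alpha-2)H}$ is directly integrable in $r$. The paper exploits this to drop the shift entirely: it takes $A_{s,t}=\E^s\int_s^t f(\Xi_{s,r}+Z_s)-f(\Xi_{s,r})\,dr$ (conditioning at $s$, not $s_-$), which removes the $I^1/I^2$ split and allows the simpler $\Theta_s=\int_{s_0}^s b(X_{v\wedge\tau})\,dv$ without the frozen term $b(X_{s\wedge\tau})(r-s)$. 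Your shifted setup works as well---after integrating over $r\in[u,t]$ both routes produce the same exponents---but is more elaborate than necessary.

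Two smaller points. First, your displayed bound on the middle piece is literally correct only when $s\leq\tau$; for $s_-\leq\tau<s$ the stopped representation produces an extra factor $[B^H]_{\cC^{H^-}}$ (unstopped) from the term $(\sigma(X_\tau)-\sigma(X_{s_-}))(B^H_s-B^H_\tau)$, which you should make explicit---this is precisely what feeds the $\|(1+[B^H]_{\cC^{H^-}|\bF})Z\|$ contribution. The paper sidesteps the whole add-and-subtract of $\bB^H$ by bounding the middle piece directly via the controlled norm $[(X_{\cdot\wedge\tau},\sigma(X_{\cdot\wedge\tau}))]_{\scD^{2H_-}_K}$ from Lemma~\ref{lem:apriori-rough} (see \eqref{eq:idk5}), obtaining $(t-s)^{2H_-}$ in one stroke. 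Second, your attribution of the threshold $\alpha>0$ to the $\bB^H$ term is not quite how the paper sees it: Remark~\ref{rem:obstacle} locates the threshold in the \emph{subsequent} stability argument, where the drift regularity $1+(\alpha-1)H$ must exceed the controlled-path exponent $1-H_-$ needed for rough integration. Within the present lemma the binding constraint is $2H_--2H+\alpha H>0$, a standing exponent assumption that is needed regardless of how the middle piece is handled.
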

\begin{remark}
It is a convenient feature of \eqref{eq:one-big-bound-rough} that only the norm of $B^H$, and not of $\mathbf{B}^H$ appears on the right-hand side of \eqref{eq:one-big-bound-rough}. One has much better exponential estimates for the former, which plays a role in the proof of Lemma \ref{lem:stability-rough} below.
\end{remark}
\begin{proof}
We may and will assume $\|f\|_{\cC^\alpha}=1$.
For $S\leq s\leq u\leq r\leq t\leq T$, set
\begin{equs}
\Theta_{s} & = \int_{s_0}^sb(X_{v  \wedge \tau})  \, d v ,
\\
\Sigma_s &=\int_{s_0}^s \sigma(X_{v\wedge\tau})\,dB^H_v,
\\
\Xi_{s,r}  &= \Theta_{s} +\Sigma_s+ \sigma(X_{s\wedge \tau}) ( B^H_r- B^H_s),
\\
 \hat {\Xi}_{s,u, r} &= \Theta_{s}  +\Sigma_s+ \sigma(X_{s\wedge \tau}) ( \E^u B^H_r- \ B^H_s).
\end{equs}
We wish to apply stochastic sewing, i.e. Lemma \ref{lem:SSL}. A slight simplification compared to the proof Lemma \ref{lem:Regularisation} is that the shifting is not necessary, so the original stochastic sewing lemma \cite{Khoa} suffices.
We wish to verify the conditions of the lemma with the processes
\begin{equs}
A_{s, t}&= \E^{s} \int_s^t f(\Xi_{s, r} +Z_{s}) -f(\Xi_{s, r}) \, dr,
\\
\cA_t&=\int_S^t \big(f(\tilde{X}_r + Z_r) -f(\tilde{X}_r)\big) \, dr.
\end{equs}
Recall that as before, with the notation
$$
\Gamma(s,u, r)= c(H)(r-u)^{2H} \sigma \sigma^\top (X_{s\wedge \tau}),
$$ 
we have by \eqref{eq:very-basic2} the identity $\E^u f(\Xi_{s,r})=\cP_{\Gamma(s,u,r)}f(\hat\Xi_{s,u,r})$.
Therefore by \eqref{eq:C1}, \eqref{eq:HKKK}, and the lower bound assumed on $\sigma\sigma^\top$ we have almost surely
\begin{equs}
|A_{s, t}|& = \Big|\int_s^t \cP_{\Gamma(s,s, r)}f (\hat{\Xi}_{s,s,  r}+Z_{s} )- \cP_{\Gamma(s,s, r)}f (\hat{\Xi}_{s,s, r}) \, dr\Big| 
\\
&\leq |Z_{s}|\int_s^t[\cP_{\Gamma(s,s, r)}f]_{C^1}\,dr 
\\
& \lesssim |Z_{s}| \int_s^t (r-s)^{(\alpha-1)H}   \, dr. 
\end{equs}
After taking $L_p(\Omega)$ norms, we get
\begin{equs}
\| A_{s, t} \|_{L_p(\Omega)} \lesssim \| Z\|_{\scC^0_p([S,T])} (t-s)^{1+(\alpha-1)H}.
\end{equs}
Since $1+(\alpha-1)H>1/2$, condition \eqref{eq:SSL-cond1} is satisfied with $N_1=N  \| Z\|_{\scC^0_p([S,T])}$.

Moving on to  \eqref{eq:SSL-cond2}, without the shifts in the definition of $A$ we have a simpler expression:
\begin{equs}
\E^{s} \delta A_{s, u, t}&= 
\E^{s}\E^u 
%\int_{s}^u f(\Xi_{s-,r}+Z_{s_-})-f(\Xi_{s_-,r})-f(\Xi_{u_-,r}+Z_{u_-})+f(\Xi_{u_-,r})\,dr
%\\
%&\qquad + \E^{s_-}
\int_{u}^t f(\Xi_{s,r}+Z_{s})-f(\Xi_{s,r})-f(\Xi_{u,r}+Z_{u})+f(\Xi_{u,r})\,dr.
%%%%% rewriting the term a bit, might be a bad idea
%\int_{u_-}^s \big(\E^u f(\Xi_{s, r} +Y_s) -\E^u f(\Xi_{s, r}) \, dr - \E^u f(\Xi_{u, r} +Y_u) +\E^u f(\Xi_{u, r})\big) \, dr.
%%%%%
\end{equs}
As before, by conditional Jensen's inequality we may drop the first conditioning and write
\begin{equ}
\|\E^{s} \delta A_{s, u, t}\|_{L_p(\Omega)}\leq\int_u^t\|J_r^1\|_{L_p(\Omega)}+\|J_r^2\|_{L_p(\Omega)}\,dr,
\end{equ}
where $J^1$ and $J^2$ are slightly simplified versions of \eqref{eq:def-J1}-\eqref{eq:def-J2}:
\begin{equs}
J^1_{r}&=   \cP_{\Gamma(s,u, r)}f (\hat{\Xi}_{s,u, r}+Z_{s} ) - \cP_{\Gamma(s,u, r)}f (\hat{\Xi}_{s,u, r})
\\
&\qquad -   \cP_{\Gamma(s,u, r)}f (\hat{\Xi}_{u,u, r}+Z_u)+ \cP_{\Gamma(s,u, r)}f (\hat{\Xi}_{u,u, r}),
\\
J^2_r&=   \cP_{\Gamma(s,u, r)}f (\hat{\Xi}_{u,u, r}+Z_u)-  \cP_{\Gamma(u,u, r)}f (\hat{\Xi}_{u,u, r}+Z_u)
\\
&\qquad -\cP_{\Gamma(s,u, r)}f (\hat{\Xi}_{u,u, r})+\cP_{\Gamma(u,u, r)}f (\hat{\Xi}_{u,u, r}).
\end{equs}
Concerning $J^1$, by \eqref{eq:C2} and \eqref{eq:HKKK} we have
\begin{equs}        
|J^1_r|
&\lesssim
|Z_{s}-Z_u|(r-u)^{(\alpha-1)H}+|Z_u||\hat{\Xi}_{s,u, r}-\hat{\Xi}_{u,u, r}|(r-u)^{(\alpha-2)H}.\label{eq:J1again}
\end{equs}
We have
\begin{equs}
|\hat{\Xi}_{s,u, r}-\hat{\Xi}_{u,u, r}| & \leq | \Theta_{s} - \Theta_{u} |
+ \big| \E^u \big( ( \sigma(X_{s\wedge \tau})  - \sigma(X_{u\wedge \tau}) ( B^H_r-  B^H_u)\big) \big|
\\
&\qquad+\big| \Sigma_{s} -\Sigma_u+ \sigma(X_{s\wedge \tau}) ( B^H_u- B^H_{s})\big|
\\
&\lesssim (s-u)+(s-u)^{H^-}(r-u)^{H^-}[B^H]_{\cC^{H^-}|\bF}
\\
&\qquad+\big| \Sigma_{s} -\Sigma_u+ \sigma(X_{s\wedge \tau}) ( B^H_u- B^H_{s})\big|.
\end{equs}
The last term is a rough remainder, but we have to be a little careful: as mentioned, we do not want the norm of the (non-stopped) $\mathbf{B}^H$ to appear.
As in \eqref{eq:stopped-integration}, we can write
\begin{equs}
\Sigma_{u} -\Sigma_s- \sigma(X_{s\wedge \tau}) ( B^H_u- B^H_{s})&=\Big(\int_s^{s\vee(u\wedge\tau)}\sigma(X_{v\wedge\tau})\,dB^H_{v\wedge\tau}-\sigma(X_{s\wedge\tau})\big(B^H_{s\vee(u\wedge\tau)}-B^H_s\big)\Big)
\\
&\qquad-\big(\sigma(X_{s\vee(u\wedge\tau)})	-\sigma(X_{s\wedge\tau})\big)\big(B^H_u-B^H_{s\vee(u\wedge\tau)}\big).\label{eq:idk5}
\end{equs}
In this form it is clear that the first term is a rough remainder of the stopped process against the stopped noise, whose controlled and rough norms are of order $1$ by Lemma \ref{lem:apriori-rough} and by definition, respectively. Therefore the remainder is bounded a.s. by a constant of order $1$ times $|t-s|^{2H_-}$.
The second term in \eqref{eq:idk5} is easily bounded a.s. by $(u-s)^{2H^-}[B^H]_{\cC^{H^-}}$. Combining all of the above gives the almost sure bound
\begin{equ}
|\hat{\Xi}_{s,u, r}-\hat{\Xi}_{u,u, r}| \lesssim (t-s)^{2H_-}\big(1+[B^H]_{\cC^{H^-}|\bF}\big).
\end{equ}
Using this in \eqref{eq:J1again} and taking $L_p(\Omega)$ norms, we can conclude
\begin{equs}
\|J^1_r\|_{L_p(\Omega)}&\lesssim [Z]_{\scC^{H^-}_p([S,T])}(t-s)^{H^-}(r-u)^{(\alpha-1)H}
\\
&\qquad+\big\|Z(1+[B^H]_{\cC^{H^-}|\bF})\big\|_{\scC^{0}_p([S,T])}(t-s)^{2H_-}(r-u)^{(\alpha-2)H}.
\end{equs}
Bounding $J^2$ goes the same way as in \eqref{eq:bounding-J2}
(the only minuscule difference is that here by \eqref{eq:triv-controll-embedding} and Lemma \ref{lem:apriori-rough} even the $H^-$ norm of $X_{\cdot\wedge\tau}$ is bounded), yielding
\begin{equ}
\|J^2_r\|\lesssim  \|Z\|_{\scC^{0}_p([S,T])}(t-s)^{H^-}(r-u)^{2H-3H+\alpha H}.
\end{equ}
Since all powers of $(r-u)$ are integrable and $H^-+\alpha H-H>2H_-+(\alpha-2)H>0$, are positive, we get
\begin{equ}
\|\E^{s} \delta A_{s, u, t}\|_{L_p(\Omega)}\lesssim \Big(\big\|Z(1+[B^H]_{\cC^{H^-}|\bF})\big\|_{\scC^{0}_p([S,T])}+[Z]_{\scC^{H^-}_p([S,T])}\Big)(t-s)^{1+2H_--2H+\alpha H}.
\end{equ}
Therefore \eqref{eq:SSL-cond2} is satisfied with $N_2=\Big(\big\|Z(1+[B^H]_{\cC^{H^-}|\bF})\big\|_{\scC^{0}_p([S,T])}+[Z]_{\scC^{H^-}_p([S,T])}\Big)$.
The verification of \eqref{eq:SSL-cond3} carries through from the proof of Lemma \ref{lem:Regularisation} without any serious change: note that the argument only requires $[\tilde X]_{\scC^\gamma_p}<\infty$ for some $\gamma>0$ (which here is satisfied with $\gamma=H^-$) and $\|\Xi_{s,r}-\tilde X_s\|_{L_p(\Omega)}\lesssim |r-s|^\gamma$ for some $\gamma>0$ (which is also satisfied with $\gamma=H^-$).
Therefore all the conditions of Lemma \ref{lem:SSL} are satisfied and \eqref{eq:one-big-bound-rough} follows from \eqref{eq:SSL-conc}.
\end{proof}

Recall the notation $\scD_K^\gamma$ from \eqref{eq:cD}. Note that $\scD_K^\gamma$ is an $\omega$-dependent path space, and one can define a complete space $L_p(\Omega;\scD_K^{\gamma})$ by defining the norm in the obvious way.
\begin{lemma}                              \label{lem:stability-rough}
Let  $p \geq 1$, $s_0\in[0,1)$, $x_0, y_0 \in\R^d$. Let $b$, $\tilde b$, and $\sigma$ satisfy $\mathbf{A}_{\mathrm{rough}}$.
Then there exist constants $N_1, N_2$, depending only on  $\alpha, H, d, d_0, M, \lambda, p,$ and $ K$, as well as a constant $\gamma=\gamma(\alpha,H)\in(0,2)$,  with the following property:
 for any  $X$ and $Y$ strong solutions of  \eqref{eq:main} and \eqref{eq:aux}, respectively, and for all $C\geq N_2$,  the following estimate holds
\begin{equs}
\big\| \big(X_{\cdot \wedge\tau }- &Y_{\cdot \wedge\tau },\sigma(X_{\cdot\wedge\tau})-\sigma(Y_{\cdot\wedge\tau})\big)\big\|_{L_p(\Omega;\scD_K^{1-H_-})}  
\\
&\leq N_1 ^{C^{2-\gamma}} \Big( |x_0-y_0|+  \big( \P ([ B^H ]_{\cC^{H^-}|\bF}\geq C ) \big)^{1/2p} +\|b-\tilde b\|_{\cC^0(\R^d)} \Big).
\end{equs}
\end{lemma}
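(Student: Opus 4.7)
The proof would mimic the Young case (Lemma \ref{lem:stability}), with the added complication that the $\scD^{1-H_-}_K$-norm tracks both the path $Z:=X-Y$ and its Gubinelli derivative $\sigma(X)-\sigma(Y)$. I would write $Z_t=(x_0-y_0)+D_t+S_t+E_t$ with $D,S,E$ the drift, rough integral, and drift-error terms defined as in Lemma \ref{lem:stability}, with $S$ now understood in the rough sense via \eqref{eq:stopped-integration}. Since $D+E$ has zero Gubinelli derivative, the entire controlled-path structure of $(Z,\sigma(X)-\sigma(Y))$ comes from $S$.

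On an interval $[U,T]\subset[s_0,1]$ I would estimate each piece in the $\scD^{1-H_-}_K([U,T])$-seminorm. The error $E$ gives the trivial $\|b-\tilde b\|_{\cC^0}$ term. The drift $D$ is handled by Lemma \ref{lem:Regularisation-rough} with $f=b$ and the adapted process $-Z_{\cdot\wedge\tau}$; its output $|t-s|^{1+2H_--2H+\alpha H}$, after dividing by $|t-s|^{1-H_-}$, leaves a small positive power of $|T-U|$ thanks to the choice $3H_-+(\alpha-2)H>0$ guaranteed by the hypothesis $3H_--H+\alpha H>1/2$. The rough integral $S$ is the most involved: by the rough-continuity estimate \eqref{eq:rough continuity} with exponents $\alpha=H^-$ and $\gamma=1-H_-$ (a valid pairing since $H_-+H^-<1$ and $H_-+2H^->1$), the pair $(S_{\cdot\wedge\tau},\sigma(X_{\cdot\wedge\tau})-\sigma(Y_{\cdot\wedge\tau}))$ lies in $\scD^{2H^-}_K$ with norm dominated by $K$ times the $\scD^{1-H_-}_K$-norm of the controlled path $(\sigma(X_{\cdot\wedge\tau})-\sigma(Y_{\cdot\wedge\tau}),\nabla\sigma(X_{\cdot\wedge\tau})\sigma(X_{\cdot\wedge\tau})-\nabla\sigma(Y_{\cdot\wedge\tau})\sigma(Y_{\cdot\wedge\tau}))$. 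The composition stability \eqref{eq:controlled-comp-stability} together with Lemma \ref{lem:apriori-rough} would then bound the latter by $\|(Z,\sigma(X)-\sigma(Y))\|_{\scD^{1-H_-}_K}$ up to initial-data terms. Downgrading $\scD^{2H^-}_K\hookrightarrow\scD^{1-H_-}_K$ via \eqref{eq:controll-increasing} yields an extra $|T-U|^{H_-+2H^--1}$ factor; this small-interval contraction is what enables the buckling.

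The crucial structural observation, echoing the remark after Lemma \ref{lem:Regularisation-rough}, is that only $[B^H]_{\cC^{H^-}|\bF}$ (and not the full rough-path norm) enters the right-hand side after combining all estimates, since the rough-path seminorm of $(B^H,\bB^H)$ on $[0,\tau]$ is already deterministically bounded by $K$. I would then split the resulting inequality by the event $\{[B^H]_{\cC^{H^-}|\bF}\ge C\}$ exactly as in \eqref{eq:idk1}-\eqref{eq:idk2}, arriving at a buckling inequality analogous to \eqref{eq:buckling} valid whenever $|T-U|^{\eps_1}\le(2N_0C)^{-1}$. Iterating across a partition of $[s_0,1]$ of mesh $\sim(N_0C)^{-1/\eps_1}$ via \eqref{eq:sum_partition} reproduces the argument of Lemma \ref{lem:stability} and delivers the claimed $N_1^{C^{2-\gamma}}$ bound. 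The threshold $C\ge N_2$ captures the requirement that $C$ must dominate the $K$-dependent constants entering the buckling step.

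The main technical obstacle I foresee is the careful handling of the cross terms in \eqref{eq:sum_partition} when iterating the controlled-path seminorm across the partition: unlike a pure H\"older seminorm, the controlled-path seminorm concatenates with an extra term $[f']_{\cC^{\gamma-\alpha}}[g]_{\cC^\alpha}$. However, this cross term is dominated by the deterministic stopping bound $[B^H_{\cdot\wedge\tau}]_{\cC^{H^-}}\le K$, so it feeds back only into the final (absorbed) constant. The book-keeping is heavier than in the Young case, but no new conceptual difficulty arises.
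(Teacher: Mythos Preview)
Your proposal is correct and follows essentially the same route as the paper: decompose $Z=D+S+E$, bound $E$ trivially, handle $D$ via Lemma~\ref{lem:Regularisation-rough}, treat $S$ pathwise through \eqref{eq:rough continuity} and \eqref{eq:controlled-comp-stability} (using the a priori bound of Lemma~\ref{lem:apriori-rough}), split on $\{[B^H]_{\cC^{H^-}|\bF}\ge C\}$, buckle, and iterate with \eqref{eq:sum_partition}. Your observation that the downgrade \eqref{eq:controll-increasing} from $\scD^{2H^-}_K$ to $\scD^{1-H_-}_K$ supplies the small factor $|T-U|^{H_-+2H^--1}$ needed to absorb the $S$-contribution is exactly the mechanism at work.

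Two small points to sharpen. First, Lemma~\ref{lem:Regularisation-rough} yields only $L_p$ bounds on increments, whereas the target norm $L_p(\Omega;\scD_K^{1-H_-})$ demands the \emph{pathwise} H\"older seminorm $[D_{\cdot\wedge\tau}]_{\cC^{1-H_-}}$; the paper bridges this by Kolmogorov, bounding $\|[D]_{\cC^{1-H_-}}\|_{L_p}\lesssim[D]_{\scC_p^{1-H}}$ for $p$ large and only then dividing by $|t-s|^{1-H}$ (not $|t-s|^{1-H_-}$). Second, your description of the threshold $C\ge N_2$ is slightly off: the buckling inequality in fact carries \emph{two} absorbing terms, one of the form $N_0\|(Z,Z')\|\,|T-U|^{\eps_0}$ without a factor of $C$ (arising from the $[Z]_{\scC_p^{H^-}}$ part of \eqref{eq:one-big-bound-rough} and from the $S$-downgrade) and one of the form $N_0 C\|(Z,Z')\|\,|T-U|^{1/2+\eps_0}$. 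Fixing $|T-U|$ so that the latter is $\le 1/4$ makes the former $\le 1/4$ only once $C$ exceeds a constant depending on $N_0$ and the exponents; that constant is $N_2$.
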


\begin{proof}
Note that one may (and we will) assume $p$ to be sufficiently large.
Take $(U,T)\in[s_0,1]^2_{\leq}$. Denoting $Z=X-Y$ and $Z'=\sigma(X)-\sigma(Y)$, one has
\begin{equs}
\big\| (Z_{\cdot \wedge \tau},Z'_{\cdot\wedge\tau}) \big\|_{\scD_K^{1-H_-}([U,T])}
 &\lesssim \| Z_{U \wedge \tau} \|_{L_p(\Omega)}+ [ D_{\cdot \wedge \tau} ]_{\cC^{1-H_-}([U,T])}
 \\
 & \qquad +  \big[ (S_{\cdot \wedge \tau},Z'_{\cdot\wedge\tau}) \big]_{\scD_K^{1-H_-}([U,T])}      +  [ E_{\cdot \wedge \tau} ]_{\cC^1([U,T])}             \label{eq:ready-to-buckle-rough}
\end{equs}
where 
\begin{equs}
D_t=\int_{s_0}^t \big( b(X_r)-b(Y_r)\big)\,dr, \quad   S_t= \int_{s_0}^t \big(\sigma(X_r)-\sigma(Y_r)\big)\,d B^H_r, \quad
E_t= \int_{s_0}^t \big( b(Y_r)-\tilde b(Y_r)\big)\,dr.
\end{equs}
The bound
\begin{equ}\label{eq:roughbuckE}
\,[ E_{\cdot \wedge \tau} ]_{\cC^1([U,T])} \leq \|b-\tilde b\|_{\cC^0(\R^d)}.
\end{equ}
is trivial.
Bounding $S$, by working pathwise, is also easy: by \eqref{eq:rough continuity}, \eqref{eq:controlled-comp-stability}, and the bounds (by definition of $\tau$ and Lemma \ref{lem:apriori-rough}) $[(B^H, \bB^H)]_{\cR^{H^-}([0,\tau])},
[X_{\cdot\wedge\tau},\sigma(X_{\cdot\wedge\tau})]_{\scD^{1-H_-}_K},
[Y_{\cdot\wedge\tau},\sigma(Y_{\cdot\wedge\tau})]_{\scD^{1-H_-}_K},\lesssim 1$, we get
\begin{equs}
\big[ (S_{\cdot \wedge \tau},Z'_{\cdot\wedge\tau}) \big]_{\scD_K^{1-H_-}([U,T])} & \lesssim\big[(\sigma(X_{\cdot\wedge\tau})-\sigma(Y_{\cdot\wedge\tau}),\nabla\sigma\sigma(X_{\cdot\wedge\tau})-\nabla\sigma\sigma(Y_{\cdot\wedge\tau})\big]_{\scD_K^{1-H_-}([U,T])}
\\
&\lesssim |Z_{U\wedge\tau}| + \big[(Z_{\cdot\wedge\tau},Z'_{\cdot\wedge\tau})]_{\scD_K^{1-H_-}([U,T])}.\label{eq:roughbuckS}
\end{equs}
Concerning $D$, first note that for large enough $p$
\begin{equs}
\, [ D_{\cdot \wedge \tau} ]_{L_p(\Omega;\cC^{1-H_-}([U,T]))} 
&\leq\Big[ \int_{s_0}^{\cdot} \big(b(\tilde{X}_r)-b(\tilde{X}_r-Z_{r\wedge \tau})\big)\,dr  \Big]_{L_p(\Omega;\cC^{1-H_-}([U,T]))} 
\\
&\lesssim \Big[ \int_{s_0}^{\cdot} \big(b(\tilde{X}_r)-b(\tilde{X}_r-Z_{r\wedge \tau})\big)\,dr  \Big]_{\scC_p^{1-H}([U,T])} ,
\end{equs}
and the latter term can be bounded via Lemma \ref{lem:Regularisation-rough} with $f=b$.
Indeed, we get
\begin{equs}
\Big[ &\int_{s_0}^{\cdot} \big(b(\tilde{X}_r)-b(\tilde{X}_r-Z_{r\wedge \tau})\big)\,dr  \Big]_{\scC_p^{1-H}([U,T])} \lesssim \|Z_{\cdot\wedge\tau}\|_{\scC^0_p([U,T])}|T-U|^{1+(\alpha-1)H-1+H}
\\
&\quad+ \Big( \big\|(1+[B^H]_{\cC^{H^{-}}|\bF})Z_{\cdot \wedge \tau}\big\|_{\scC^0_p([U,T])}+[Z_{\cdot \wedge \tau}]_{\scC^{H^-}_p([U,T])} \Big) |T-U|^{1+2H_--2H+\alpha H-1+H}.
\end{equs}
Note that the exponents on the right-hand side simplify to $\alpha H$ and $2H_--H+\alpha H$, respectively. In particular, by the trivial inequality $\|Z_{\cdot\wedge\tau}\|_{\scC^0_p([U,T])}\leq\|Z_{U\wedge\tau}\|_{L_p(\Omega)}+[Z_{\cdot\wedge\tau}]_{\scC^{H^-}_p([U,T])}|T-U|^{H^-}$, we can slightly simplify to
\begin{equs}
\Big[ &\int_{s_0}^{\cdot} \big(b(\tilde{X}_r)-b(\tilde{X}_r-Z_{r\wedge \tau})\big)\,dr  \Big]_{\scC_p^{1-H}([S,T])} \lesssim \|Z_{U\wedge\tau}\|_{L_p(\Omega)}|T-U|^{\alpha H}
\\
&\quad+ \Big( \big\|(1+[B^H]_{\cC^{H^{-}}|\bF})Z_{\cdot \wedge \tau}\big\|_{\scC^0_p([U,T])}+[Z_{\cdot \wedge \tau}]_{\scC^{H^-}_p([U,T])} \Big) |T-U|^{2H_--H+\alpha H}.
\end{equs}
To treat the factor $[B^H]_{\cC^{H^{-}}}$ we proceed similarly to Lemma \ref{lem:Regularisation}: on the event $\{[B^H]_{\cC^{H^{-}}}>C\}$ we bound as in  \eqref{eq:idk2}, while on $\{[B^H]_{\cC^{H^{-}}}\leq C\}$ we bound the factor by $C$, and the resulting contribution of $C\|Z_{\cdot\wedge\tau}\|_{\scC^0_p([U,T])}$ as above.
We therefore get
\begin{equs}
\Big[ &\int_{s_0}^{\cdot} \big(b(\tilde{X}_r)-b(\tilde{X}_r-Z_{r\wedge \tau})\big)\,dr  \Big]_{\scC_p^{1-H}([U,T])} \lesssim C\|Z_{U\wedge\tau}\|_{L_p(\Omega)}|T-U|^{\alpha H}
\\
&\quad+ [Z_{\cdot \wedge \tau}]_{\scC^{H^-}_p([U,T])} |T-U|^{2H_--H+\alpha H}
+C [Z_{\cdot \wedge \tau}]_{\scC^{H^-}_p([U,T])} |T-U|^{H^-+2H_--H+\alpha H}
\\
&\quad +\big( \P ([ B^H ]_{\cC^{H^-}|\bF}\geq C ) \big)^{1/2p}|T-U|^{2H_--H+\alpha H}.\label{eq:roughbuckD}
\end{equs}
The only information about the four exponents on the right-hand side that we care about at this point is that they are nonnegative, positive, bigger than $1/2$, and nonnegative, respectively. Therefore in the sequel we bound them from below by $0$, $\eps_0$, $1/2+\eps_0$, and $0$, respectively, where $\eps_0>0$.
Further, by \eqref{eq:triv-controll-embedding} and the definition of $\tau$, clearly $[Z_{\cdot \wedge \tau}]_{\scC^{H^-}_p([U,T])}\lesssim\big\| (Z_{\cdot \wedge \tau},Z'_{\cdot\wedge\tau}) \big\|_{\cD_K^{1-H_-}([U,T])}$.
Putting \eqref{eq:roughbuckE}-\eqref{eq:roughbuckS}-\eqref{eq:roughbuckD} together we get that with some $N_0\lesssim 1$
\begin{equs}
\big\| (Z_{\cdot \wedge \tau},Z'_{\cdot\wedge\tau}) \big\|_{L_p(\Omega;\scD_K^{1-H_-}([U,T]))}&\leq N_0 C\|Z_{U\wedge\tau}\|_{L_p(\Omega)}+ 2\|b-\tilde b\|_{\cC^0(\R^d)}
\\
&\quad+N_0\big( \P ([ B^H ]_{\cC^{H^-}|\bF}\geq C ) \big)^{1/2p}
\\
&\quad+N_0\big\| (Z_{\cdot \wedge \tau},Z'_{\cdot\wedge\tau}) \big\|_{L_p(\Omega;\scD_K^{1-H_-}([U,T]))}|T-U|^{\eps_0}
\\&\quad+N_0 C\big\| (Z_{\cdot \wedge \tau},Z'_{\cdot\wedge\tau}) \big\|_{L_p(\Omega;\scD_K^{1-H_-}([U,T]))}|T-U|^{1/2+\eps_0}.
\end{equs}
If $|T-U|^{1/2+\eps_0}\leq(4 N_0 C)^{-1}$, then for large enough $C$ this also implies $|T-U|^{\eps_0}\leq(4 N_0)^{-1}$, and therefore the inequality buckles. The remaining argument from the proof of Lemma \ref{lem:stability-rough} (following \eqref{eq:iterable}) carries through unchanged.
\end{proof}

\begin{remark}\label{rem:obstacle}
The condition $\alpha>0$ can be traced from the above proof. We buckle the difference in $\scD_K^{1-H_-}$, which is the minimal regularity required to make sense of the rough integral. On the other hand, Lemma \ref{lem:Regularisation-rough} provides a bound on drift difference in $\scC_p^{1+(\alpha-1)H}$, leading to the requirement $1+(\alpha-1)H>1-H_-$.
\end{remark}

The stability of solutions with respect to their initial conditions follows from Lemma \ref{lem:stability-rough}, by following the proof of Corollary \ref{cor:initial-stability} without any change.

\begin{corollary}\label{cor:initial-stability-rough}
Let  $p \geq 1$, $s_0\in[0,1)$, $x_0,y_0\in\R^d$, and let $b$ and $\sigma$ satisfy $\mathbf{A}_{\mathrm{rough}}$.  For any $\theta \in (0, 1)$, there exists a constant $N=N(\theta,\alpha,H,d,d_0,M,\lambda,p,K)$
% depending only on  $\theta , \alpha, H, d, M, \lambda, p,$ and $ K$, 
with the following property:
if  $X$ and $Y$ are strong solutions of  \eqref{eq:main}  with initial conditions $x_0$ and $y_0$, respectively, then 
\begin{equs}\label{eq:initial-almost-Lipschitz-rough}
\big\| \big(X_{\cdot \wedge\tau }- &Y_{\cdot \wedge\tau },\sigma(X_{\cdot\wedge\tau})-\sigma(Y_{\cdot\wedge\tau})\big)\big\|_{L_p(\Omega;\scD_K^{1-H_-})}    \leq  N |x_0-y_0|^\theta. 
\end{equs}
%In particular, strong uniqueness holds. 
\end{corollary}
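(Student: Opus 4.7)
The plan is to mirror exactly the argument used in the proof of Corollary \ref{cor:initial-stability}, replacing the scalar stability estimate by its controlled-rough-path version from Lemma \ref{lem:stability-rough}. First, the a priori bound from Lemma \ref{lem:apriori-rough} (applied to both $X$ and $Y$) bounds the controlled-path norm $\big\|(X_{\cdot\wedge\tau},\sigma(X_{\cdot\wedge\tau}))-(Y_{\cdot\wedge\tau},\sigma(Y_{\cdot\wedge\tau}))\big\|_{\scD^{1-H_-}_K}$ uniformly almost surely, so it suffices to establish \eqref{eq:initial-almost-Lipschitz-rough} in the regime where $|x_0-y_0|$ is sufficiently small; for $|x_0-y_0|$ bounded away from $0$ the bound is trivial after possibly enlarging $N$.

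Next, apply Lemma \ref{lem:stability-rough} with $\tilde b = b$, so that the drift-difference term vanishes: for every $C\geq N_2$ we obtain
\begin{equs}
\big\| \big(X_{\cdot \wedge\tau }-Y_{\cdot \wedge\tau },\sigma(X_{\cdot\wedge\tau})-\sigma(Y_{\cdot\wedge\tau})\big)\big\|_{L_p(\Omega;\scD_K^{1-H_-})}\leq N_1^{C^{2-\gamma}}\Big(|x_0-y_0|+\big(\P([B^H]_{\cC^{H^-}|\bF}\geq C)\big)^{1/2p}\Big).
\end{equs}
Since $(s,t)\mapsto \E^s B^H_t$ is a centred Gaussian field with variance of order at most $|t-s|^{2H}$, Fernique's theorem applies to the seminorm $[B^H]_{\cC^{H^-}|\bF}$, yielding some $\beta=\beta(\alpha,H,p)>0$ for which $\big(\P([B^H]_{\cC^{H^-}|\bF}\geq C)\big)^{1/2p}\leq e^{-\beta C^2}$ for all $C\geq 1$.

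Finally, optimize over $C$. Choosing $C=\beta^{-1/2}\sqrt{\log(1/|x_0-y_0|)}$ (which is $\geq N_2$ once $|x_0-y_0|$ is sufficiently small) turns the probability term into $|x_0-y_0|$, while the prefactor becomes
\begin{equ}
N_1^{C^{2-\gamma}}=\exp\!\Big(N_3\,(\log(1/|x_0-y_0|))^{(2-\gamma)/2}\Big),
\end{equ}
for some $N_3$ depending only on $N_1,\beta,\gamma$. Since $\gamma\in(0,2)$ gives $(2-\gamma)/2<1$, this subexponential factor is dominated by $|x_0-y_0|^{\theta-1}$ for any fixed $\theta\in(0,1)$ and $|x_0-y_0|$ small, yielding \eqref{eq:initial-almost-Lipschitz-rough}. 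The only conceptual subtlety, which I expect to be the main (minor) point worth checking, is that the iteration constants in Lemma \ref{lem:stability-rough} are independent of $|x_0-y_0|$, so that the choice of $C$ above is admissible uniformly; this is guaranteed by the statement of that lemma.
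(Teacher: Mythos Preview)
Your proposal is correct and follows essentially the same approach as the paper, which explicitly states that the result follows from Lemma \ref{lem:stability-rough} by repeating the proof of Corollary \ref{cor:initial-stability} without any change. You have correctly identified the one genuinely new wrinkle in the rough case, namely that Lemma \ref{lem:stability-rough} requires $C\geq N_2$ rather than $C\geq 1$, and handled it appropriately by restricting to small $|x_0-y_0|$.
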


\subsection{Strong existence and the semiflow}

\begin{theorem}\label{thm:exist-rough}
Let   $s_0\in[0,1)$, $x_0\in\R^d$, and let $b$ and $\sigma$ satisfy $\mathbf{A}_{\mathrm{rough}}$. Then there exists a strong solution $X^{s_0,x_0}$ of \eqref{eq:main}.
Moreover, the random field $X$ given by
\begin{equ}
\big(\mathbb{Q}\cap[0,1)\big)\times\R^d\times[0,1]\ni (s_0,x_0,t)\mapsto X^{s_0,x_0}_t\in\R^d
\end{equ}
has a modification $\what X$ such that with an event $\what \Omega\subset\Omega_H$ of full probability, $\what X$ satisfies (i)-(ii)-(iii) from Theorem \ref{thm:exist-Young} for all $\omega\in\what \Omega$:
\end{theorem}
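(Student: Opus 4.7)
The plan is to mimic the proof of Theorem \ref{thm:exist-Young}, replacing the Young-case scale $\scC^{1/2}_p$ with the controlled-path scale $L_p(\Omega;\scD_K^{1-H_-})$ furnished by Lemma \ref{lem:stability-rough} and Corollary \ref{cor:initial-stability-rough}. First I would fix $(s_0,x_0)$, choose an approximating sequence $b^n\in \cC^1$ with $b^n\to b$ uniformly (possible because $\alpha>0$), and let $X^n$ denote the (classical) strong solution of the smoothed equation \eqref{eq:main} with $b$ replaced by $b^n$. By Lemma \ref{lem:stability-rough} applied with $\tilde b=b^m$ (and $C$ fixed sufficiently large), the pair $(X^n_{\cdot\wedge\tau_K}-X^m_{\cdot\wedge\tau_K},\,\sigma(X^n_{\cdot\wedge\tau_K})-\sigma(X^m_{\cdot\wedge\tau_K}))$ is Cauchy in $L_p(\Omega;\scD_K^{1-H_-})$ for every $K\in\N$ and every sufficiently large $p$. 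A suitable Kolmogorov-type embedding (as in \eqref{eq:embed}) upgrades this to Cauchy convergence in $L_p(\Omega; \scD_K^{\gamma}([s_0,1]))$ for some $\gamma>1-H$, yielding a limit $(X_{\cdot\wedge\tau_K},\sigma(X_{\cdot\wedge\tau_K}))$ for each $K$; since $\tau_K=1$ eventually on $\Omega_H$, these patch into an adapted process $X$ with $(X,\sigma(X))\in\cD^\gamma_{B^H}$ almost surely.

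Next I would pass to the limit in the approximate equation. Uniform convergence of $b^n\to b$ gives
\begin{equ}
\Big\|\int_{s_0}^\cdot b^n(X^n_s)\,ds-\int_{s_0}^\cdot b(X_s)\,ds\Big\|_{\cC^0([s_0,1])}\to 0
\end{equ}
in probability, while the convergence of the rough integrals follows from the continuity estimates \eqref{eq:rough continuity} and \eqref{eq:controlled-comp-stability}: given the $\scD_K^{\gamma}$-convergence of $(X^n,\sigma(X^n))$ to $(X,\sigma(X))$ and the bound on the rough-path norm of $(B^H,\bB^H)$ on $[0,\tau_K]$, the integrals $\int_{s_0}^\cdot\sigma(X^n_s)\,dB^H_s$ converge to $\int_{s_0}^\cdot\sigma(X_s)\,dB^H_s$ in $\scD_K^{2H^-}$. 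This identifies $X$ as a strong solution.

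To produce the modification $\what X$ and its semiflow, I would now vary $(s_0,x_0)$ over $(\Q\cap[0,1))\times\R^d$. Corollary \ref{cor:initial-stability-rough} provides an almost-Lipschitz bound in $|x_0-y_0|$ in $L_p(\Omega;\scD_K^{1-H_-})$, which after the same Kolmogorov upgrade yields
\begin{equ}
\big\|\|X^{s_0,x_0}_{\cdot\wedge\tau_K}-X^{s_0,y_0}_{\cdot\wedge\tau_K}\|_{\scD_K^{\gamma}([s_0,1])}\big\|_{L_p(\Omega)}\lesssim |x_0-y_0|^{\bar\theta}
\end{equ}
for $\bar\theta<1$ arbitrarily close to $1$ (at the price of larger $p$). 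A further application of Kolmogorov's continuity criterion, jointly in $(x_0,t)$, provides a modification $\what X$ satisfying (iii) on a full-measure event $\what\Omega\subset\Omega_H$. Property (i) holds because we can check the equation on the countable set of rational data and extend by continuity of both the drift and the rough integral on $\scD_K^{\gamma}$. Property (ii) is then obtained as in the Young case: the analogous semiflow identity holds pathwise for each approximating $X^n$ by classical uniqueness of smooth RDEs, and the stability bound combined with the pointwise $L_p$ convergence at $t=u_0$ lets us pass to the limit in the Kolmogorov-regularised version, giving $\what X^{s_0,x_0}_t=\what X^{u_0,\what X^{s_0,x_0}_{u_0}}_t$ on $\what\Omega$ for all $s_0\leq u_0\leq t$ rational.

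The main subtlety, which does not appear in the Young case, is that $\scD_K^{\gamma}$ is an $\omega$-dependent path space: the Cauchy property, the Kolmogorov embedding, and the rough-integral continuity all have to be invoked in the $L_p(\Omega;\scD_K^\gamma)$ framework, using the uniform rough-path bound $[(B^H,\bB^H)]_{\cR^{H^-}([0,\tau_K])}\leq K$. Once this is accepted, the continuity of the composition \eqref{eq:controlled-comp-stability} and the rough-integral continuity \eqref{eq:rough continuity} take the place of \eqref{eq:Young continuity}, and the rest of the argument of Theorem \ref{thm:exist-Young} transfers verbatim.
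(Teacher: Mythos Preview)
Your existence argument for fixed $(s_0,x_0)$ matches the paper's: approximate $b$ by $b^n\in\cC^1$, use Lemma \ref{lem:stability-rough} to get a Cauchy sequence in $L_p(\Omega;\scD_K^{1-H_-})$, and pass to the limit by completeness and rough-integral stability. One minor confusion: your invocation of a ``Kolmogorov-type embedding as in \eqref{eq:embed}'' at this stage is spurious. The norm $L_p(\Omega;\scD_K^{1-H_-})$ is already an $L_p$ of a pathwise norm, so there is nothing to upgrade; since $1-H_->1-H$, completeness of the space gives the limit directly.

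For the semiflow you take a different route from the paper, and here there is a gap you do not close. You want to apply Kolmogorov's continuity criterion in $x_0$ with values in the $\omega$-dependent controlled-path space $\scD_K^{\gamma}$, and then deduce (i) by continuity of the rough integral in that norm. The paper explicitly flags this $\omega$-dependence as the reason to proceed differently: it first drops the $\scD_K^{1-H_-}$ bound from Corollary \ref{cor:initial-stability-rough} to a $\cC^{H^-}$ bound via \eqref{eq:triv-controll-embedding}, applies the standard Kolmogorov criterion in the \emph{fixed} Banach space $\cC^{H^-}$ to obtain the modification and (iii), and then for (i) at irrational $y_0$ recovers the controlled structure by an interpolation step: the uniform a priori bound $\sup_n\|(\what X^{s_0,x_0^n},\sigma(\what X^{s_0,x_0^n}))\|_{\cD^{2H_-}_{B^H}}\lesssim 1$ from Lemma \ref{lem:apriori-rough}, combined with $\cC^{H^-}$-convergence along rationals $x_0^n\to y_0$, yields convergence in $\cD^{2H_--\eps}_{B^H}$, which suffices to pass to the limit in the equation. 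Your route could in principle be salvaged by carrying out the chaining argument over dyadic $x_0$ with the random norm and extending by completeness of each $\scD_K^{\gamma}(\omega)$, but you do not justify this non-standard Kolmogorov step; ``once this is accepted'' is precisely the point at issue. The paper's $\cC^{H^-}$-plus-interpolation approach sidesteps the random-target-space problem entirely and makes transparent why the limit at every $y_0$ is still a controlled path.
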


\begin{remark}
Note that although we require the same properties (i)-(ii)-(iii) from the semiflow $\what X$ in the Young and the rough case, the \emph{meaning} of (i) (namely, what being a solution of \eqref{eq:main} entails) is different, c.f. Definition \ref{def:soln}.
\end{remark}

\begin{proof}
The existence of a strong solution for fixed initial time and initial condition is obtained as in the proof of Theorem \ref{thm:exist-Young}: taking a sequence $(b^n)_{n\in\N}\subset \cC^1$ converging uniformly to $b$, the classically well-posed rough differential equations
\begin{equ}
X^n_t=x_0+\int_{s_0}^t b^n(X^n_s)\,ds+\int_{s_0}^t\sigma(X^n_s)\,dB^H_s,
\end{equ}
and their solutions $X^n$, the application of Lemma \ref{lem:stability-rough} yields that for any $K\in\N$ the sequence $\big(X^n_{\cdot\wedge\tau_K},\sigma(X^n_{\cdot\wedge\tau_K})\big)_{n\in\N}$ is Cauchy with respect to the norm $L_p(\Omega;\scD_K^{1-H_-})$.
In particular, by the completeness of the space, $\big\|\big(X^n- X,\sigma(X^n)-\sigma(X)\big)\big\|_{\cD^{1-H_-}_{B^H}}\to 0$ in probability for some adapted process $X$ such that $\big(X,\sigma(X)\big)\in \cD^{1-H_-}_{B^H}$ a.s. As in the proof of Theorem \ref{thm:exist-Young}, it is straightforward to verify that $X$ is a strong solution to \eqref{eq:main}.

As for the semiflow, we proceed slightly differently, due to the $\omega$-dependence of the solution space $\cD^{1-H_-}_{B^H}$. From Corollary \eqref{cor:initial-stability-rough} we get
that for all $K\in\N$, $\bar\theta\in(0,1)$ one has uniformly over $x_0,y_0\in\R^d$ the bound
\begin{equ}
\big\|\|X^{s_0,x_0}_{\cdot\wedge\tau_K}-X^{s_0,y_0}_{\cdot\wedge\tau_K}\|_{\cC^{H^-}([s_0,1])}\big\|_{L_p(\Omega)}\lesssim |x_0-y_0|^{\bar\theta}.
\end{equ}
Therefore, by applying Kolmogorov's continuity criterion, we get a modification $\what X$ of $X$: a random field $(\what X^{s_0,x_0})_{x_0\in\R^d}$ that takes values in $\cC^{H^-}([s_0,1])$ and such that for any $\theta\in(0,\bar\theta-1/p)$, satisfies for all $K$ and $R$
\begin{equ}\label{eq:semiflow bound rough}
\Bigg\|\sup_{\substack{x_0\neq y_0\\|x_0|,|y_0|\leq R}}\frac{\|\what X^{s_0,x_0}_{\cdot\wedge\tau_K}-\what X^{s_0,y_0}_{\cdot\wedge\tau_K}\|_{\cC^{H^-}([s_0,1])}}{|x_0-y_0|^\theta}\Bigg\|_{L_p(\Omega)}\lesssim 1.
\end{equ}
Verifying (ii) and (iii) goes just as in the proof of Theorem \ref{thm:exist-Young}. Concerning (i), take  the event  of full probability on which for all $x_0\in\Q^d$, $\what X^{s_0,x_0}$ coincides with $X^{s_0,x_0}$ (in particular, is a solution) and the random variable on the left-hand side of \eqref{eq:semiflow bound rough} is finite for all $K$ and $R$.
We then want to show that on this event for any $y_0\in\R^d$, $\what X^{s_0,y_0}$ is also a solution. Let $K>\big[(B^H,\bB^H)\big]_{\cR^{H^-}([0,1])}$, $R>|y_0|+1$, and let $(x_0^n)_{n\in\N}\subset\Q^d$ such that $x_0^n\to y_0$. By Lemma \ref{lem:apriori-rough} and \eqref{eq:semiflow bound rough} we have
\begin{equ}
\sup_{n\in\N}\big\|\big(\what X^{s_0,x_0^n},\sigma(\what X^{s_0,x_0^n})\big)\big\|_{\cD^{2H_-}_{B^H}}\lesssim 1, \qquad \|\what X^{s_0,x_0^n}-\what X^{s_0,y_0}\|_{\cC^{H^-}}\to 0.
\end{equ}
This implies 
\begin{equ}
\big\|\big(\what X^{s_0,x_0^n}-\what X^{s_0,y_0},\sigma(\what X^{s_0,x_0^n})-\sigma(\what X^{s_0,y_0})\big)\big\|_{\cD^{2H_--\eps}_{B^H}}\to 0
\end{equ}
for any $\eps>0$,
which is enough to pass to the limit and conclude that $\what X^{s_0,y_0}$ is a solution.
\end{proof}

\subsection{Path-by-path uniqueness}

\begin{theorem}\label{PBP-uniq-rough}
Let $b$, $\sigma$, and $\bB^H$ satisfy $\mathbf{A}_{\mathrm{rough}}$ and take $\what\Omega\subset\Omega_H$ obtained from Theorem \ref{thm:exist-rough}.
Let $\omega\in \what\Omega$, $s_0\in[0,1)$, $x_0\in\R^d$, and let $Y,Y'$ be solutions of \eqref{eq:main}. Then $Y=Y'$.
\end{theorem}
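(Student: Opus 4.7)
The plan is to follow the strategy of Theorem \ref{thm:PBP-uniq-Young} almost verbatim, replacing the Young remainder bound by the rough remainder bound \eqref{eq:rough remainder} in the key time-increment estimate. By continuity of $Y$ and $Y'$ and density of the rationals, it will suffice to show $Y_t = \what X^{s_0,x_0}_t(\omega)$ for every $t\in (s_0,1]\cap\Q$; the extension to irrational $s_0$ will be identical to the Young case, using a rational sequence $s_0^n\searrow s_0$ together with property (iii) of $\what X$ and continuity of $Y,Y'$ at $s_0$. Fixing $\omega\in\what\Omega$, rational $s_0<t$, and defining $f(s) = \what X^{s,Y_s}_t$ for $s\in[s_0,t]\cap\Q$, one has $f(s_0)=\what X^{s_0,x_0}_t$ and $f(t)=Y_t$, so the task reduces to showing that $f$ is constant. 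For $u\leq r$ in $[s_0,t]\cap\Q$, properties (ii) and (iii) of $\what X$ will give, for any $\theta\in(0,1)$,
\begin{equ}
|f(r)-f(u)| = |\what X^{r,Y_r}_t - \what X^{r,\what X^{u,Y_u}_r}_t| \lesssim |Y_r - \what X^{u,Y_u}_r|^\theta.
\end{equ}

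To bound $\Delta_v := Y_v - \what X^{u,Y_u}_v$ on $[u,r]$, I would use that both $Y|_{[u,1]}$ and $\what X^{u,Y_u}$ solve \eqref{eq:main} with initial data $(u,Y_u)$, so that $\Delta_v = D_{u,v}+S_{u,v}$ with $D$ the drift difference and $S$ the rough integral of the controlled path $(\sigma(Y)-\sigma(\what X^{u,Y_u}),\, \nabla\sigma\sigma(Y)-\nabla\sigma\sigma(\what X^{u,Y_u}))$ against $(B^H,\mathbf{B}^H)$. The key observation is that $\Delta_u=0$ makes both the integrand and its Gubinelli derivative vanish at the base point $u$, so the rough remainder estimate \eqref{eq:rough remainder} collapses to
\begin{equ}
|S_{u,v}| \leq N|v-u|^{H^-+\gamma}
\end{equ}
for a $\gamma>1-H$ chosen so that $H^-+\gamma>1$, with $N=N(\omega)$ controlled via the (finite) rough path norm of $(B^H,\mathbf{B}^H)$ and the (finite) controlled norms of the two solutions; the latter are bounded through Lemma \ref{lem:apriori-rough} combined with \eqref{eq:quadratic-controled-estimate} and \eqref{eq:controlled-comp-stability}. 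For the drift, $\|b\|_{\cC^0}<\infty$ yields the trivial $|D_{u,v}|\lesssim|v-u|$, whence $|\Delta_s|\lesssim|s-u|$ on $[u,r]$; bootstrapping via $b\in\cC^\alpha$ will upgrade this to $|D_{u,v}|\lesssim|v-u|^{1+\alpha}$. Combining, $|\Delta_r|\lesssim|r-u|^{1+\eta}$ for some $\eta=\eta(\alpha,H)>0$, so $|f(r)-f(u)|\lesssim|r-u|^{\theta(1+\eta)}$, and picking $\theta$ close enough to $1$ to make $\theta(1+\eta)>1$ will force $f$ to be constant.

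The main (but mild) obstacle in this argument is the second-order term in $\mathbf{B}^H$ hidden inside $S_{u,v}$: the rough remainder \eqref{eq:rough remainder} naively leaves surviving contributions $f_u(B^H_v - B^H_u)$ and $f'_u\cdot\mathbf{B}^H_{u,v}$ of only orders $|v-u|^{H^-}$ and $|v-u|^{2H^-}$, which would prevent the exponent of $|r-u|$ from exceeding $1$. The identity $Y_u=\what X^{u,Y_u}_u$ makes both of these vanish, so only the good $|v-u|^{H^-+\gamma}$ power survives; everything else is purely pathwise for $\omega\in\what\Omega$ and requires no stochastic sewing at this stage.
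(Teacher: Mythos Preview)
Your proof is correct and follows essentially the same route as the paper's. One small point of presentation: the paper does not use the section's fixed exponent $H^-$ here but instead picks a $\gamma$-dependent $\bar H\in(1-\gamma,H)$, since the $\gamma>1-H$ coming from Definition~\ref{def:soln}(iii) could in principle be too close to $1-H$ to guarantee $H^-+\gamma>1$. Your appeal to Lemma~\ref{lem:apriori-rough} (taking $K$ large enough that $\tau_K(\omega)=1$) does resolve this by upgrading to $\gamma=2H_-$, for which $2H_-+H^->1$ holds by the standing assumptions; the paper instead reads the finite controlled norms directly off the definition of solution and adjusts the rough-path exponent, which is marginally more economical but amounts to the same thing.
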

\begin{proof}
The proof is similar to that of Theorem \ref{thm:PBP-uniq-Young}, with a bit more care due to the rough integrals.
Again it suffices to consider the case $s_0\in\Q$ and to show that $Y=\what X^{s_0,x_0}(\omega)$. We drop $\omega$ from the notation for the rest of the proof.
For an arbitrary $t\in(s_0,1]\cap \Q$ and for $s\in[s_0,t]\cap\Q$ we define $f(s)=\what X^{s,Y_s}_t$, and since $f(s_0)=\what X^{s_0,x_0}_t$ and $f(t)=Y_t$, it suffices to show that $f$ is constant. Precisely as in \eqref{eq:PBP-1}, for arbitrary $\theta\in(0,1)$ we get
\begin{equ}\label{eq:PBP-rough}
|f(r)-f(u)|\lesssim |Y_r-\what X^{u,Y_u}_r|^\theta.
\end{equ}
By the definition of a solution, we have
\begin{equ}
\big(Y,\sigma(Y)\big), \big(\what X^{u,Y_u},\sigma(\what X^{u,Y_u})\big)\in \cD_{B^H}^\gamma([u,r])
\end{equ} 
for some $\gamma>1-H$ and therefore also
\begin{equ}
\big(\sigma (Y),\nabla\sigma\sigma(Y)\big), \big(\sigma(\what X^{u,Y_u}),\nabla\sigma\sigma(\what X^{u,Y_u})\big)\in \cD_{B^H}^\gamma([u,r]).
\end{equ} 
Moreover, the two processes (including their Gubinelli derivatives) coincide at time $u$. Let us fix $\bar H\in(1-\gamma,H)$ and recall that $(B^H, \mathbf{B}^H)\in \cR^{\bar H}$.
Therefore by rough integration we have for all $v\in[u,r]$
\begin{equs}
\Big|\int_u^v \sigma(Y_s)-\sigma(\what X^{u,Y_u}_s)\,dB^H_s\Big|
&=\Big|\int_u^v \sigma(Y_s)-\sigma(\what X^{u,Y_u}_s)\,dB^H_s
\\
&\qquad-\big(\sigma(Y_u)-\sigma(\what X^{u,Y_u}_u)\big)(B^H_v-B^H_u)
\\
&\qquad-\big(\nabla\sigma\sigma(Y_u)-\nabla\sigma\sigma(\what X^{u,Y_u}_u)\big)\cdot\mathbf{B}^H_{u,v}\Big|\lesssim |v-u|^{\gamma+\bar H}.
\end{equs}
From here the argument concludes as the proof of Theorem \ref{thm:PBP-uniq-Young}: the drift difference
\begin{equ}\label{PBP-2}
\Big|\int_u^v b(Y_s)-b(\what X^{u,Y_u}_s)\,ds\Big|
\end{equ}
can first be bounded by $|u-v|$, and since that implies $|Y_s-\what X^{u,Y_u}_s|\lesssim |s-u|$, combined with the $\alpha$-H\"older continuity of $b$, we can in fact bound the drift difference \eqref{PBP-2} by $|v-u|^{1+\alpha}$.
We therefore have $|Y_s-\what X^{u,Y_u}_s|\lesssim |s-u|^{(1+\alpha)\wedge(\gamma+\bar H)}$, and so if one chooses $\theta$ sufficiently close to $1$, \eqref{eq:PBP-rough} shows that $f$ is constant.
\end{proof}

\subsection{Weak existence for distributional drift}\label{sec:weak}

The purpose of this section is to prove Theorem \ref{thm:weak}.
Unlike in the rest of the article, here we do not need to compare solutions from different initial times and initial conditions, so for simplicity
we take $s_0=0$, $x_0=0$.
Let us also recall the main regularity condition in Theorem \ref{thm:weak}:
in this section we assume  that 
\begin{equs}        \label{eq:cond-alpha-weak}
2-\frac{1}{H}>\alpha >\frac{1}{2}- \frac{1}{2H},
\end{equs}
where the first inequality can be freely assumed for convenience since lowering $\alpha$ is never a loss of generality.
Fix
\begin{equ}
1/3<H_-<H^-<H<H^+
\end{equ}
such that $2H_->1+\alpha H^+$, $\alpha H^++2H_--H^->0$, and $1+\alpha H+\alpha H^+-H>0$.
First, we define what the integral of a distribution along certain paths is.
%Let $\beta>0$, $\gamma>H$, $\lambda>0$, and take adapted processes  $X$, $D$, $S$, $S'$ 
%such that the equality
%\begin{equs}
%X_t = D_t + S_t
%\end{equs}
%holds a.s. for all $t\in[0,1]$ and such that one has $D\in C^\beta$, $(S,S')\in D^\gamma_{B^H}$, and $S'S'^\top\succeq\lambda\I$ almost surely.
Given a continuous process $X$ and $f\in\cC^1$ let us define the integral process $\cI^X(f)$ by
\begin{equ}
\cI_t^X(f)=\int_0^t f(X_s)\,ds.
\end{equ}
%For $f \in C^1$, let us set  $\mathcal{I}_t(f)= \int_{s_0}^ t f(X_s) \, ds$.  It is obvious that $\mathcal{I}$ is a continuous linear operator  from $C^1(\R^d)$ into into $\scC^{IDK}_p$. 
To make sense of the equation, we wish to extend $\cI^X$ to $\cC^\alpha$ for negative $\alpha$.
This relies on the following estimate.
%Denote by $\overline{C^\alpha}$ the closure of $C^1$ in $C^\alpha$. Recall that for any $\alpha<\alpha'$ one has $C^{\alpha'}\subset\overline{C^\alpha}$.
\begin{remark}\label{rem:SSL-mod}
In the proof we will apply stochastic sewing in the following somewhat unusual way.
% Suppose that $(U,T)\in[0,1]_\leq^2$ and $(A_{s,t})_{(s,t)\in[U,T]_\leq^2}$ is a family of random variables in $L_p(\Omega,\R^d)$ such that $A_{s,t}$ is $\cF_t$-measurable.
In \cite{Khoa} and all subsequent variations all bounds are required to hold in some $L_p(\Omega)$ norm with $p\geq 2$. Note however, that this is only really necessary for the martingale part. The remaining terms can be treated with almost sure arguments, which will be quite convenient for our purposes.
\end{remark}
\begin{lemma}\label{lem:weak-integral}
Fix $\lambda>0$, $K<\infty$.
Assume that $D$ is an adapted process belonging to $\cC^{1+\alpha H^+}$ a.s., 
$(S,S')$ is an adapted process belonging to $\cD^{2H^-}_{B^H}$ a.s., and that $\|S'\|\leq K$ and $S'S'^\top\succeq\lambda\I$. Set $X=D+S$.
Then there exists $\eps,\gamma>0$ depending only on $\alpha,H$ such that for any $f\in\cC^1$ there exists a random variable $\xi \in \cap_{p \geq 1} L_p(\Omega)$ such that a.s. for all $(s,t)\in[0,1]^2_\leq$ one has the bound
\begin{equs}
|\cI^X_t&(f)-\cI^X_s(f)|\leq \|f\|_{\cC^\alpha} \xi(t-s)^{1+\alpha H^+}
\\
&+N\|f\|_{\cC^\alpha}
\big([D]_{\cC^{1+\alpha H^+}([s,t])}
+[(S,S')]_{\cD^{2H_-}_{B^H}([s,t])}+[B^H]_{\cC^{H^-}|\bF}^\gamma\big)(t-s)^{1+\eps}.\label{eq:weak-main-bound}
\end{equs}
The constant $N$, as well as any $L_p(\Omega)$ norm of $\xi$, $p\geq 2$, depends only on $\alpha,H,d,d_0,\lambda,K,p$.
\iffalse
Let $p \geq 2$ and assume that $D \in \mathscr{C}^{1+\alpha H }$, $(S, S') \in L_p(\Omega; \cD^{2H^-}_{B^H})$. The operator $\mathcal{I}$ extends uniquely to a continuous linear operator $\mathcal{I}:  \overline{\cC^\alpha} \to \scC^{1+\alpha H}_p$.  Moreover,  there exist constants $\eps_2= \eps( \alpha, H)>0$  and $N=N()$  such that 
\begin{equs}        
\| \mathcal{I}_t(f)- \mathcal{I}_s(f)\|_{L_p(\Omega)} & \leq N \|f\|_{\cC^\alpha}\big(  [D]_{\scC_p^{1+\alpha H}([U, T])} + [ (S, S')] _{L_p(\Omega; D^{2H^{-}}_{B^H}([U, T]))} + 1 \big) |t-s|^{1+\eps_2}
\\
& \qquad +   N\|f\|_{\cC^\alpha} |t-s|^{1+\alpha H}     \label{eq:estimate-extension-I}
\end{equs}
\fi
\end{lemma}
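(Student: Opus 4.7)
The plan is to apply stochastic sewing to the germ
\begin{equ}
A_{s,t} := \E^s\int_s^t f(\Xi_{s,r})\,dr, \qquad \Xi_{s,r} := D_s + S_s + S'_s(B^H_r - B^H_s),
\end{equ}
the natural linearisation of $X_r$ around time $s$ already used in the proofs of Lemmas \ref{lem:Regularisation} and \ref{lem:Regularisation-rough}. Writing $\hat\Xi_{s,r} := D_s+S_s+S'_s(\E^s B^H_r-B^H_s)$ and $\Gamma(s,r) := c(H)(r-s)^{2H}S'_s(S'_s)^\top$, the conditional Gaussian formula \eqref{eq:very-basic2} yields $\E^s f(\Xi_{s,r}) = \cP_{\Gamma(s,r)}f(\hat\Xi_{s,r})$. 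The ellipticity hypotheses $\|S'\|\leq K$ and $S'(S')^\top\succeq\lambda\I$ give $\Gamma(s,r)\asymp (r-s)^{2H}\I$, so the heat kernel estimates \eqref{eq:C1}--\eqref{eq:C2}, \eqref{eq:HKKK} and Propositions \ref{prop:HK-Gamma}--\ref{prop:HK-Gammadiff} apply uniformly in $s,r$; the smoothing bound $\|\cP_{\Gamma(s,r)}f\|_{\cC^0}\lesssim\|f\|_{\cC^\alpha}(r-s)^{\alpha H}$ immediately yields the pathwise first sewing estimate $|A_{s,t}|\lesssim\|f\|_{\cC^\alpha}(t-s)^{1+\alpha H}$, whose exponent is strictly greater than $1/2$ by \eqref{eq:cond-alpha-weak}.

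The second sewing estimate requires a pathwise bound on $\E^s\delta A_{s,u,t}$, which I obtain by the now-familiar $J^1/J^2$ decomposition from the proofs of Lemmas \ref{lem:Regularisation}--\ref{lem:Regularisation-rough}: moving the argument $\hat\Xi_{s,r}\to\hat\Xi_{u,r}$ at fixed covariance, and moving $\Gamma(s,r)\to\Gamma(u,r)$ at fixed argument. The pathwise displacement $|\hat\Xi_{s,r}-\hat\Xi_{u,r}|$ is bounded by $[D]_{\cC^{1+\alpha H^+}([s,t])}(u-s)^{1+\alpha H^+}$ plus a rough-remainder piece $[(S,S')]_{\cD^{2H_-}_{B^H}([s,t])}(u-s)^{2H_-}$, decomposed as in \eqref{eq:idk5} so that only $B^H$ (not $\bB^H$) appears, plus a cross term proportional to $[B^H]_{\cC^{H^-}|\bF}$ coming from the increment of $S'$ paired with $\E^s B^H_r-B^H_s$; while $|\Gamma(s,r)-\Gamma(u,r)|$ contributes another factor $(u-s)^{2H_-}$ via the controlled regularity of $S'$. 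Integrating in $r\in(u,t)$ and using the three arithmetic conditions $2H_->1+\alpha H^+$, $\alpha H^++2H_--H^->0$, and $1+\alpha H+\alpha H^+-H>0$ yields a pathwise bound
\begin{equ}
|\E^s\delta A_{s,u,t}|\lesssim \|f\|_{\cC^\alpha}\bigl([D]_{\cC^{1+\alpha H^+}([s,t])}+[(S,S')]_{\cD^{2H_-}_{B^H}([s,t])}+[B^H]_{\cC^{H^-}|\bF}^\gamma\bigr)(t-s)^{1+\eps},
\end{equ}
where the bilinear cross term $[(S,S')]\cdot[B^H]$ is split via Young's inequality into a linear $[(S,S')]$ contribution and a $[B^H]^\gamma$ contribution for suitable $\gamma$.

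Applying the hybrid form of stochastic sewing of Remark \ref{rem:SSL-mod} -- with $A$ and $\E^s\delta A$ handled almost surely and only the martingale increment $\delta A-\E^s\delta A$ treated in $L_p(\Omega)$ -- produces a unique process $\cA$, which a standard Riemann-sum approximation as in \eqref{eq:identify1-Young}--\eqref{eq:identify2-Young} identifies with $\cI^X_t(f)-\cI^X_0(f)$ for the smooth $f$ at hand. Combining the pointwise bound on $A_{s,t}$ with the sewing remainder and converting the resulting $L_p(\Omega)$ bound on the martingale part into a pathwise bound, with an $L_p$-integrable prefactor $\xi$, via a Kolmogorov continuity argument in the two variables $(s,t)$ for $p$ large enough (the Kolmogorov step accounting for the mild degradation $H\to H^+$) then delivers \eqref{eq:weak-main-bound}. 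The main obstacle is the bookkeeping in step 2: keeping the pathwise bound on $\E^s\delta A_{s,u,t}$ genuinely \emph{linear} in the seminorms $[D]_{\cC^{1+\alpha H^+}}$ and $[(S,S')]_{\cD^{2H_-}_{B^H}}$, and ensuring that only $B^H$ -- not $\bB^H$ -- enters through the decomposition \eqref{eq:idk5}, which is what allows the subsequent weak-existence argument to close via Fernique-type exponential estimates on $[B^H]_{\cC^{H^-}|\bF}$.
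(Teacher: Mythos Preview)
Your proposal follows essentially the same strategy as the paper: the germ $A_{s,t}=\E^s\int_s^t f(\Xi_{s,r})\,dr$, the pathwise bound $|A_{s,t}|\lesssim(t-s)^{1+\alpha H}$, the two-piece decomposition of $\E^s\delta A_{s,u,t}$ (the paper's $F_1,F_2$), and the Young-inequality device to linearise the bilinear cross term into $[(S,S')]+[B^H]^\gamma$. One point, however, is handled too loosely.

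The conversion of the $L_p$ bound on the martingale part $Z_{U,T}:=\sum_n\sum(\delta A-\E^s\delta A)$ into a pathwise bound cannot go through a genuine two-parameter Kolmogorov argument as you propose. Such an argument would require box-increment estimates on $Z_{s,t}-Z_{u,t}-Z_{s,v}+Z_{u,v}$, and because the dyadic partitions defining $Z$ shift nontrivially when either endpoint moves, these are hard to obtain---the paper's footnote explicitly flags this as ``far more demanding'' and avoids it. The paper's workaround is to observe, via the chaining half of Kolmogorov's theorem, that it suffices to establish \eqref{eq:weak-main-bound} on the countable dyadic set $\Pi=\{((i-1)2^{-n},i2^{-n}):n\in\N,\,1\leq i\leq 2^n\}$. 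Over $\Pi$ a one-parameter union bound turns $\|Z_{U,T}\|_{L_p}\lesssim(T-U)^{1+\alpha H}$ (valid for all $p\geq2$) into $\xi_0:=\sup_{(U,T)\in\Pi}|Z_{U,T}|/(T-U)^{1+\alpha H^+}\in\bigcap_{p\geq1} L_p$, which is precisely the random prefactor $\xi$ in \eqref{eq:weak-main-bound}. The remaining pathwise pieces $|A_{U,T}|$ and $\sum_n\sum|\E^s\delta A|$ already carry the local seminorms on $[U,T]$, and these survive the extension from $\Pi$ to all $(s,t)$.

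A minor point: your invocation of \eqref{eq:idk5} is unnecessary here. That decomposition was specific to Lemma~\ref{lem:Regularisation-rough}, where the solution was stopped but the driving noise was not. In the present lemma there is no stopping; the hypothesis $(S,S')\in\cD^{2H^-}_{B^H}$ gives $|S_u-S_s-S'_s(B^H_u-B^H_s)|\leq[(S,S')]_{\cD^{2H_-}_{B^H}}|u-s|^{2H_-}$ directly, and this already involves no $\bB^H$.
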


\begin{proof}
We may and will assume $\|f\|_{\cC^\alpha} =1$.
It is trivial by continuity that it suffices to prove \eqref{eq:weak-main-bound} for rational points.
Somewhat less trivial, but follows from invoking the classical argument in Kolmogorov's continuity theorem (see e.g. \cite[Theorem~3.1]{Friz-Hairer}), that it is in fact sufficient to consider the case
\begin{equ}
(s,t)\in\Pi:=\{\big((i-1)2^{-n},i 2^{-n}\big):\,n\in\N, i=1,\ldots, 2^{n}\}.
\end{equ}
Therefore, take $(U,T)\in\Pi$. For $(s,t)\in[0,1]_{\leq}^2$ we set
\begin{equs}  
A_{s, t}&= \E^s \int _s^t  f \big(X_s + S'_s (B^H_r-B^H_s)\big) \, dr
\\
&=\int _s^t  \cP_{ \Gamma(s,s,r)}f \big( X_s + S_s' (\E^s B^H_r-B^H_s)\big) \, dr,
\end{equs}
with the notation $\Gamma(s, u, r):= S'_s S'^\top_s(r-u)^{2H}$.
On the probability $1$ event where $X\in\cC^{H^-}$ and $[B^H]_{\cC^{H^-}|\bF}<\infty$, $A_{s,t}$ reconstructs $\cI^X(f)$, i.e. for any sequence of partitions $(\pi_n)_{n\in\N}=\big(\{U=t_0^n\leq\cdots \leq t_{k(n)}^n=T\}\big)_{n\in\N}$ of $[U,T]$ such that $|\pi_n|:=\min\{t_{i}-t_{i-1}:\,i=1,\ldots,k(n)\}\to 0$,
one has
\begin{equ}\label{eq:weak-reconstruct}
\cI_T^X(f)-\cI_U^X(f)=\lim_{n\to\infty}\sum_{i=1}^{k(n)} A_{t_{i-1}^n,t_i^n}.
\end{equ}
%Let us take as partitions
%\begin{equ}
%\pi_n=\pi_n(U,T)=\{U,T\}\cup \big((U,T)\cap\{i 2^{-n}:\,i=0,\ldots,2^n\}\big).
%\end{equ}
We may as well simply take $\pi_n$ to be the uniform partition with meshsize $(T-U)2^{-n}$.
By \eqref{eq:weak-reconstruct} we have
\begin{equ}
|\cI_T^X(f)-\cI_U^X(f)|\leq |A_{U,T}|+\big|\sum_{n\in\N}\sum \delta A_{s,u,t}-\E^s\delta A_{s,u,t}\big|+\sum_{n\in\N}\sum\big|\E^s\delta A_{s,u,t}\big|.
\end{equ}
The inner summation is over all $(s, t)$ such that $s<t$  are consecutive points in $\pi_n$ and $u=(t+s)/2$.  For the  sake of not further overcrowding notation,  the index set will be dropped. 
%We wish to verify the conditions of Lemma \ref{lem:SSL} with $\cA_t=\cI_t(f)$.
%Notice that with the notation $\Gamma(s, u, r):= S'_s S'^\top_s(r-u)^{2H}$ we have
%\begin{equs}
%A_{s, t}=  \int _s^t  \cP_{ \Gamma(s,s,r)}f \big( X_s + S_s' (\E^s B^H_r-B^H_s)\big) \, dr. 
%\end{equs}
By Proposition \ref{prop:HK-Gamma}, we have
\begin{equs}                                                   \label{eq:est-Ast-extension-of-I}
| A_{s,t} |  \leq \int_s^t \|\cP_{ \Gamma(s,s,r)}f \|_{\cC^0} \, dr \lesssim   (t-s)^{1+\alpha H}.
\end{equs}
Since $\alpha>-1/2H$, the usual stochastic sewing arguments (see \cite[Section~2]{Khoa}) show that the limit
\begin{equ}\label{eq:Zdef}
Z_{U,T}:=\sum_{n\in\N}\sum \delta A_{s,u,t}-\E^s\delta A_{s,u,t}
\end{equ}
exists in $L_p(\Omega)$ for any $p\geq 2$ and satisfies
\begin{equ}\label{eq:firstZ}
\|Z_{U,T}\|_{L_p(\Omega)}\lesssim (T-U)^{1+\alpha H}.
\end{equ}
It is again a consequence of the proof of Kolmogorov's continuity theorem that the fact that \eqref{eq:firstZ} holds for all $p\geq 2$ implies that the random variable
\begin{equ}
\xi_0:=\sup_{(s,t)\in\Pi}\frac{|Z_{s,t}|}{(t-s)^{1+\alpha H^+}}
\end{equ}
is a.s. finite\footnote{Note that at this point we do \emph{not} claim an a.s. $\cC^{1+\alpha H^+}_2$ modification of $Z$. That would require bounding $Z_{s,t}-Z_{s,v}-Z_{u,t}+Z_{u,v}$ for generic time-quartuples \cite[Lemma A.1]{Mate19}, which is far more demanding.} and satisfies $\|\xi\|_{L_p(\Omega)}\lesssim 1$ for all $p\geq 2$.
\iffalse
We will need the following intermediate estimate whose proof we postpone.
\begin{proposition}\label{prop:Z}
The random field $(Z_{U,T})_{(U,T)\in[0,1]^2_\leq}$ has a modification $\tilde Z$ in $\cC_2^{1+\alpha H^+}$ and for any $p\geq 2$ one has
\begin{equ}
\big\|[\tilde Z]_{\cC_2^{1+\alpha H^+}}\big\|_{L_p(\Omega)}\leq N
\end{equ}
with a constant $N=N(\alpha,H,d,\lambda,p)$.
\end{proposition}
\fi
%Continuing the proof of Lemma \ref{lem:weak-integral}, take $\xi=[\tilde Z]_{\cC_2^{1+\alpha H^+}}+N$. On the probability $1$ event where $\tilde Z$ and $Z$ coincides on rationals, we have the bound
Denoting $\xi=N+\xi_0$, by definition we have the bound
\begin{equ}\label{eq:weak-intermediate}
|\cI_T^X(f)-\cI_U^X(f)|\leq \xi|T-U|^{1+\alpha H^+}+\sum_{n\in\N}\sum\big|\E^s\delta A_{s,u,t}\big|
\end{equ}
for all $U,T$.
It remains to estimate $\E^s \delta A_{s,u,t}$.
We have 
\begin{equs}
\E^s \delta A_{s,u,t} & = \E^s \int _u^t  \cP_{ \Gamma(s, u, r)}f \big(X_s +  S_s' (\E^u B^H_r-B^H_s)\big) \, dr. 
\\
& \qquad  -  \E^s \int _u^t  \cP_{\Gamma(u, u, r)}f \big(X_u +  S'_u (\E^u B^H_r-B^H_u)\big) \, dr =: F_1+F_2,
\end{equs}
where
\begin{equs}
F_1 = \E^s \int _u^t  \cP_{  \Gamma(s, u, r)}f (X_s +  {S'_s} (\E^u B^H_r-B^H_s)) -  \cP_{ \Gamma(s, u, r)}f (X_u +  {S'_u} (\E^u B^H_r-B^H_u)) \, dr,
\\
F_2 = \E^s \int _u^t   \cP_{ \Gamma(s, u, r)}f (X_u +  {S'_u} (\E^u B^H_r-B^H_u)) -  \cP_{\Gamma(u, u, r)}f (X_u +  {S'_u} (\E^u B^H_r-B^H_u)) \, dr. 
\end{equs}
For $F_1$, first take $\theta=\theta(\alpha,H)\in(0,1)$ such that $H^-+\theta(2H_--H^-)\geq 1+\alpha H^+$.  By Proposition \ref{prop:HK-Gamma} we have
\begin{equs}
|F_1| & \lesssim   \int_u^t  [ \cP_{  \Gamma(s, u, r)}f]_{\cC^1} |D_s-D_u + S_s-S_u+ {S'_s}(B^H_u-B^H_s)+ ({S'_s}-{S'_u}) ( \E^u B^H_r -B_u) |  \, dr 
\\
& \lesssim  (t-u)^{1+(\alpha-1)H}
\Big(
[D]_{\cC^{1+\alpha H^+}([U,T])}(s-u)^{1+\alpha H^+}
+[ (S, S')] _{\cD^{2H_{-}}_{B^H}([U, T])}(s-u)^{2H_-}
\\
&\qquad\qquad+ [ S']_{\cC^{2H_--H^-}([U, T])}^{ \theta}  [ B^H]_{\cC^{H^-}|\bF}(t-s)^{H^-+\theta(2H_--H^-) }\Big)
%  +  \big\| \big(  [ (S, S')] _{\cD^{2H^{-}}_{B^H}([U, T])} +  [ S']_{\cC^{H^-}([U, T])}^{ \theta}  [ B^H]_{\cC^{H^-}([0,1])} \big) \big\|_{L_p(\Omega)} |t-s|^{(1+\theta)H^- },
\end{equs}
where we have used the fact that $(\alpha-1)H>-1$ and that  $\| S'\|_{\cC^0} \lesssim 1$. 
First we note that the smallest power of the time increments in the big brackets is $1+\alpha H^+$.
Next, we use Young's inequality $a^\theta b\lesssim a+b^{(1-\theta)^{-1}}$, so with  $\gamma=(1-\theta)^{-1}$ we get
%Moreover, by the above and Young's inequality, and the fact that $ [ B^H]_{\cC^{H^-}([0,1])}$ has moments of any order,   we obtain
\begin{equ}
|F_1|\lesssim (t-s)^{2+\alpha H+\alpha H^+-H}\Big([D]_{\cC^{1+\alpha H^+}([U,T])}
+[ (S, S')] _{\cD^{2H_{-}}_{B^H}([U, T])}+ [ B^H]_{\cC^{H^-}|\bF}^\gamma\Big).\label{eq:F1-weak}
%\|F_1\|_{L_p(\Omega)}  & \lesssim  [D]_{\scC_p^{1+\alpha H}([U, T])} (t-s)^{2+2\alpha H - H}
%\\
%&  + \big(   [ (S, S')] _{L_p(\Omega; D^{2H^{-}}_{B^H}([U, T]))} + 1  \big) |t-s|^{1+(\alpha+\theta)H^-}.
\end{equ}
For $F_2$ we have 
\begin{equs}
|F_2|&  \lesssim  \int_u^t \|\Gamma(s, u, r)- \Gamma(u, u, r) \| (r-u)^{-2H+ \alpha H} \, dr 
\\
& \lesssim  \int_u^t \|  S'_s S'^\top_s- S'_u S'^\top_u \| (r-u)^{\alpha H} \, dr 
\\
& \lesssim (t-u)^{1+\alpha H} (u-s)^{2H_--H^-} [ S']_{\cC^{2H_--H^-}([U, T])}
\\
& \lesssim (t-s)^{1+\alpha H +2H_--H^-} [ (S,S')]_{\cD^{2H_-}_{B^H}([U, T])}.\label{eq:F2-weak}
\end{equs}
Here, similarly to before, we have used $\alpha H > -1$ and $\| S'\|_{\cC^0} \lesssim 1$.
It remains to put \eqref{eq:F1-weak} and \eqref{eq:F2-weak} together and notice that by assumption both exponents $2+\alpha H+\alpha H^+-H$ and $1+\alpha H +2H_--H^-$ are greater than $1$.
Therefore, with some $\eps>0$ one has the bound
\begin{equ}
|\E^s\delta A_{s,u,t}|\lesssim (t-s)^{1+\eps}\Big([D]_{\cC^{1+\alpha H^+}([U,T])}
+[ (S, S')] _{\cD^{2H_{-}}_{B^H}([U, T])}+ [ B^H]_{\cC^{H^-}|\bF}^\gamma\Big).
\end{equ}
Using this bound in \eqref{eq:weak-intermediate} in the standard way (let us again refer the reader to \cite{Khoa} for more details) we get the claimed bound \eqref{eq:weak-main-bound}.
\iffalse
     By the estimates on $F_1$ and $F_2$, we get 
\begin{equs}
\| \E^s \delta A_{s,u,t} \|_{L_p(\Omega)}   & \lesssim  [D]_{\scC_p^{1+\alpha H}([U, T])} (t-s)^{2+2\alpha H - H}
\\
&   \qquad + \big(   [ (S, S')] _{L_p(\Omega; D^{2H^{-}}_{B^H}([U, T]))} + 1 \big) |t-s|^{1+(\alpha+\theta)H^-}.
\end{equs}
Let us now  choose $\theta$  sufficiently close to $1$ so that $ 1+(\alpha+ \theta)H^->1$.  Notice also that $2+2\alpha H - H>1$ under assumption \eqref{eq:cond-alpha-weak}.  Therefore,  we have 
\begin{equs}
 \, & \| \E^s \delta A_{s,u,t} \|_{L_p(\Omega)} 
\\
  \lesssim  &  \  \big(  [D]_{\scC_p^{1+\alpha H}([U, T])} + [ (S, S')] _{L_p(\Omega; D^{2H^{-}}_{B^H}([U, T]))} + 1 \big) |t-s|^{1+\eps_2},
\end{equs}
for some $\eps_2>0$. 
\fi
\end{proof}

We formulate a few corollaries of Lemma \ref{lem:weak-integral}, the first of which is the claimed extension of $\cI^X$.
Denote by $\overline{C^\alpha}$ the closure of $C^1$ in $C^\alpha$. Recall that for any $\alpha<\alpha'$ one has $C^{\alpha'}\subset\overline{C^\alpha}$
\begin{corollary}\label{cor:distr-integral}
Take $X,D,S$ as in Lemma \ref{lem:weak-integral} and suppose furthermore that for some $p\geq 1$ and $M<\infty$
\begin{equ}\label{eq:wantbound}
\big\|[D]_{\cC^{1+\alpha H^+}([0,1])}
+[(S,S')]_{\cD^{2H_-}_{B^H}([0,1])}\big\|_{L_p(\Omega)}\leq M.
\end{equ}
Then $f\mapsto\cI^X(f)$ extends as a continuous linear map from $\overline{C^\alpha}$ to $\scC_p^{1+\alpha H^+}$ whose norm depends only on $M, \alpha,H,d,d_0,\lambda,K,p$.
\end{corollary}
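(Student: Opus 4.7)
The plan is to upgrade the almost-sure estimate \eqref{eq:weak-main-bound} of Lemma \ref{lem:weak-integral} into an $L_p(\Omega)$ bound, and then extend $\cI^X$ from $\cC^1$ to $\overline{\cC^\alpha}$ by density, using completeness of $\scC_p^{1+\alpha H^+}$.

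For the first step, I would simply take $L_p(\Omega)$-norms on both sides of \eqref{eq:weak-main-bound}. All three random quantities appearing on the right-hand side have controlled moments: $\|\xi\|_{L_p(\Omega)}$ is bounded in terms of $\alpha,H,d,d_0,\lambda,K,p$ by the last sentence of Lemma \ref{lem:weak-integral}; $\bigl\|[D]_{\cC^{1+\alpha H^+}}+[(S,S')]_{\cD^{2H_-}_{B^H}}\bigr\|_{L_p(\Omega)}\leq M$ by the assumption \eqref{eq:wantbound}; and $\bigl\|[B^H]_{\cC^{H^-}|\bF}^\gamma\bigr\|_{L_p(\Omega)}$ is finite (indeed bounded by a constant depending only on $H,\gamma,p$) by Fernique's theorem applied to the Gaussian random variable $[B^H]_{\cC^{H^-}|\bF}$. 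Combined with $\cI^X_0(f)=0$ and the comparability of $(t-s)^{1+\eps}$ and $(t-s)^{1+\alpha H^+}$ on $[0,1]^2_\leq$ (see below), this gives, for every $f\in\cC^1$,
\begin{equ}
\|\cI^X(f)\|_{\scC_p^{1+\alpha H^+}}\leq N\|f\|_{\cC^\alpha}
\end{equ}
with the claimed dependence of $N$.

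For the second step, the map $f\mapsto \cI^X(f)$ is obviously linear on $\cC^1$ and, by the above, bounded as a map $(\cC^1,\|\cdot\|_{\cC^\alpha})\to \scC_p^{1+\alpha H^+}$. Since $\cC^1$ is dense in $\overline{\cC^\alpha}$ by definition and the target space is a Banach space, the operator extends uniquely to a bounded linear map $\overline{\cC^\alpha}\to \scC_p^{1+\alpha H^+}$ with the same operator norm, completing the proof.

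The only point requiring a minor sanity-check is the comparison $(t-s)^{1+\eps}\leq (t-s)^{1+\alpha H^+}$ on $[0,1]$, equivalent to $\eps\geq \alpha H^+$. When $\alpha\leq 0$ this is automatic; when $\alpha>0$ it amounts to book-keeping on the parameter constraints at the start of Section \ref{sec:weak}, using that $H^+$ is only required to be strictly larger than $H$ and can be chosen arbitrarily close to $H$ (inspection of the last two displays in the proof of Lemma \ref{lem:weak-integral} shows that $\eps=\min(1+\alpha H+\alpha H^+-H,\alpha H+2H_--H^-)$ is then at least $\alpha H^+$). I do not expect any genuine obstacle here, this corollary is a direct consequence of Lemma \ref{lem:weak-integral} together with Fernique's theorem.
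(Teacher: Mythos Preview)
Your argument is correct and is exactly the route the paper intends: the corollary is stated without proof as an immediate consequence of Lemma \ref{lem:weak-integral}, and your steps (take $L_p$-norms in \eqref{eq:weak-main-bound}, control $\xi$, the assumption \eqref{eq:wantbound}, and $[B^H]_{\cC^{H^-}|\bF}^\gamma$ separately, then extend by density into the Banach space $\scC_p^{1+\alpha H^+}$) are precisely what is implicit.

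One small remark: your final ``sanity-check'' paragraph about the case $\alpha>0$ is unnecessary. The standing assumption \eqref{eq:cond-alpha-weak} in Section \ref{sec:weak} forces $\alpha<2-1/H\leq 0$ (since $H\leq 1/2$), so $\alpha H^+<0<\eps$ automatically and the comparison $(t-s)^{1+\eps}\leq (t-s)^{1+\alpha H^+}$ on $[0,1]$ holds without any further book-keeping.
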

The other important consequence is that we obtain a priori bounds for solutions to \eqref{eq:main} with drift that is regular, but using only its distributional norm in the bounds.
\begin{corollary}\label{cor:weak-apriori}
Take $X,D,S$ as in Lemma \ref{lem:weak-integral}, $b\in \cC^1$, $\sigma\in\cC^2$, and suppose furthermore that
\begin{equ}
D_t=\int_0^t b(X_s)\,ds,\qquad S_t=\int_0^t \sigma(X_t)\,dB^H_t,\qquad S'_t=\sigma(X_t);
\end{equ}
with the latter integral understood in the rough sense. Then for any $p\geq 2$
there exist a constant $N$ depending on $\alpha,H,d,d_0,\lambda,p,\|b\|_{\cC^\alpha},\|\sigma\|_{\cC^2}$, such that
\begin{equ}\label{eq:havebound}
\big\|[D]_{\cC^{1+\alpha H^+}([s,t])}
+[(S,\sigma(X))]_{\cD^{2H_-}_{B^H}([s,t])}\big\|_{L_p(\Omega)}\leq N.
\end{equ}
\end{corollary}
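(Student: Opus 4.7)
The plan is a buckling argument combining Lemma \ref{lem:weak-integral} with the rough-path bounds \eqref{eq:rough continuity} and \eqref{eq:quadratic-controled-estimate}. For a subinterval $[U,T]\subset[0,1]$ set $M_1:=[D]_{\cC^{1+\alpha H^+}([U,T])}$ and $M_2:=[(S,\sigma(X))]_{\cD^{2H_-}_{B^H}([U,T])}$. Applying Lemma \ref{lem:weak-integral} with $f=b\in\cC^1$ (so $\cI^X_t(b)=D_t$) and dividing \eqref{eq:weak-main-bound} by $(t-s)^{1+\alpha H^+}$ yields, almost surely,
\begin{equ}
M_1\leq\|b\|_{\cC^\alpha}\xi+N\|b\|_{\cC^\alpha}\big(M_1+M_2+[B^H]_{\cC^{H^-}|\bF}^\gamma\big)|T-U|^{\eps-\alpha H^+},
\end{equ}
and an inspection of the proof of Lemma \ref{lem:weak-integral} shows $\eps>\alpha H^+$ under our standing choice of parameters, so on short intervals the $M_1+M_2$ terms enter with a small coefficient.

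To bound $M_2$, note that $S$ is the rough integral of $(\sigma(X),\nabla\sigma\sigma(X))$ against $(B^H,\bB^H)$. Using \eqref{eq:rough continuity} at level $\gamma_0:=1+\alpha H^+$ (for which $\gamma_0+H^->1$ holds), \eqref{eq:controll-increasing} to descend from $\cD^{2H^-}$ to $\cD^{2H_-}$, the composition estimate \eqref{eq:quadratic-controled-estimate} (applicable with $\beta\leq 1$ because $2H^-\geq 1+\alpha H^+$), and the splitting $(X,\sigma(X))=(D,0)+(S,\sigma(X))$ to bound $[(X,\sigma(X))]_{\cD^{\gamma_0}_{B^H}}\lesssim M_1+M_2$, I obtain
\begin{equ}
M_2\leq N\,\rho_\omega\,\big(1+M_1+M_2\big)|T-U|^{\delta_0}
\end{equ}
for some $\delta_0=\delta_0(\alpha,H)>0$ and a random $\rho_\omega$ depending polynomially on $[(B^H,\bB^H)]_{\cR^{H^-}([0,1])}$ (hence having moments of every order by Fernique's theorem).

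Combining the two displays, one finds a random length $\ell_\omega>0$, depending polynomially on $\rho_\omega$, $[B^H]_{\cC^{H^-}|\bF}^\gamma$, and $\|b\|_{\cC^\alpha}$, such that whenever $|T-U|\leq\ell_\omega$ the $M_1,M_2$ terms on the right can be absorbed, giving $M_1+M_2\lesssim\|b\|_{\cC^\alpha}\xi+\rho_\omega$ on such intervals. Partitioning $[0,1]$ into $\lceil\ell_\omega^{-1}\rceil$ pieces and gluing via \eqref{eq:sum_partition} together with subadditivity of the H\"older seminorm produces an almost-sure global bound that is polynomial in $\ell_\omega^{-1},\rho_\omega,\xi$, and $[B^H]_{\cC^{H^-}|\bF}^\gamma$; since all these factors have moments of every order, passing to $L_p$ norms yields \eqref{eq:havebound}.

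The main subtlety is the bookkeeping of exponents: I need a single $\gamma_0$ and enough slack in the auxiliary parameters $H_-,H^-,H^+$ so that $\gamma_0+H^->1$, $2H^-\geq\gamma_0$, and $\eps-\alpha H^+>0$ hold simultaneously. Each of these constraints reduces, after a short calculation, to the standing assumption $\alpha>1/2-1/(2H)$ and to the choices of $H_-,H^-,H^+$ already fixed at the start of Section \ref{sec:weak}.
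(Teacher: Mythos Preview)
Your argument is correct and follows essentially the same route as the paper. The only cosmetic difference is that the paper first buckles $M_1$ alone (using \eqref{eq:weak-main-bound}) to obtain $M_1\lesssim \xi+M_2+[B^H]_{\cC^{H^-}|\bF}^\gamma$, then substitutes this into the $M_2$ estimate and buckles $M_2$; you instead add the two inequalities and buckle the sum $M_1+M_2$ directly, which is equivalent. Two minor remarks: your inequality $\eps>\alpha H^+$ is immediate from the standing assumption $\alpha<2-1/H\le 0$ in Section~\ref{sec:weak} (so $\alpha H^+<0<\eps$), no inspection of the proof of Lemma~\ref{lem:weak-integral} is needed; and the integrability of $\rho_\omega$ is not literally Fernique (since $\bB^H$ lives in the second Wiener chaos) but follows from standard Gaussian rough path estimates as in \cite[Ch.~10--11]{Friz-Hairer}.
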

\begin{proof}
Since $\cI^X(b)=D$, we can apply \eqref{eq:weak-main-bound} and buckle with respect to $D$. 
We get for any $s,t$
\begin{equ}\label{eq:Dbound}
\,[D]_{\cC^{1+\alpha H^+}([s,t])}
\lesssim \xi+[(S,\sigma(X))]_{\cD^{2H_-}_{B^H}([s,t])}+[B^H]_{\cC^{H^-}|\bF}^\gamma.
\end{equ}
Concerning $S$, with by now usual arguments (that is, by \eqref{eq:controll-increasing}, \eqref{eq:rough continuity} keeping in mind that $1+\alpha H^++H^->1)$, \eqref{eq:quadratic-controled-estimate} with $\beta=1$, triangle inequality, and \eqref{eq:Dbound}) we have for any $U,T$ 
\begin{equs}
\,[&(S,\sigma(X))]_{\cD^{2H_-}_{B^H}([U,T])}
\\
&\leq [( S,\sigma(X))]_{\cD^{2H^-}_{ B^H}([U,T])}|T-U|^{2(H^--H_-)}
\\
&\lesssim  [(\sigma(X),\nabla\sigma\sigma(X))]_{\cD^{1+\alpha H^+}_{B^H}([U,T])}[(B^H,\bB^H)]_{\cR^{H^-}}|T-U|^{2(H^--H_-)}
\\
&\lesssim [(X,\sigma(X))]_{\cD^{1+\alpha H^+}_{B^H}([U,T])}\big(1+[(B^H,\bB^H)]_{\cR^{H^-}}^3\big)|T-U|^{2(H^--H_-)}
\\
&\leq \big([D]_{\cC^{1+\alpha H^+}([U,T])}+[( S,\sigma(X))]_{\cD^{2H_-}_{ B^H}([U,T])}\big)\big(1+[(B^H,\bB^H)]_{\cR^{H^-}}^3\big)|T-U|^{2(H^--H_-)}
\\
&\lesssim \big(\xi+[( S,\sigma(X))]_{\cD^{2H_-}_{ B^H}([U,T])}+[B^H]_{\cC^{H^-}|\bF}^\gamma\big)\big(1+[(B^H,\bB^H)]_{\cR^{H^-}}^3\big)|T-U|^{2(H^--H_-)}.
\end{equs}
On small enough intervals the inequality buckles. That is, with $\tilde\gamma=3/(2(H^--H_-))$, we can take an equidistant partition $\{0=U_0<U_1<\ldots U_L=1\}$ of $[0,1]$ such that $L\lesssim 1+[(B^H,\bB^H)]_{\cR^{H^-}}^{\tilde\gamma}$ and such that for each $i=1,\ldots,L$ one has
\begin{equ}
\,[( S,\sigma(X))]_{\cD^{2H_-}_{ B^H}([U_{i-1},U_i])}\lesssim \xi+[B^H]_{\cC^{H^-}|\bF}^\gamma.
\end{equ}
Applying the generalised subadditivity rule \eqref{eq:sum_partition} for the $\cD^{2H_-}_{ B^H}$ norm, we get
\begin{equs}
\,[( S,\sigma(X))]_{\cD^{2H_-}_{ B^H}([0,1])}&\lesssim L\big(\xi+[B^H]_{\cC^{H^-}|\bF}^\gamma\big)\big(1+[B^H]_{\cC^{H^-}}\big)
\\
&\lesssim \big(1+[(B^H,\bB^H)]_{\cR^{H^-}}^{\tilde\gamma}\big)\big(\xi+[B^H]_{\cC^{H^-}|\bF}^\gamma\big)\big(1+[B^H]_{\cC^{H^-}}\big).
\end{equs}
Taking $L_p(\Omega)$ norms in here as well as in \eqref{eq:Dbound} finishes the proof.
\end{proof}

\begin{proof}[Proof of Theorem \ref{thm:weak}]
We will only prove a slightly weaker statement, with $\cR^{H^-}$ instead of $\cap_{\eps>0}\cR^{H-\eps}$, we leave the general case as an exercise.

For the present proof we need yet another two exponents $H_{--}$ and $H^{++}$. We require them to satisfy $H_{--}<H_-$, $H^{++}>H^+$, $\alpha H^{++}+H_{--}>0$, and ........
Since the condition \eqref{eq:cond-alpha-weak} is with a strict inequality, after sacrificing a small amount of regularity we may and will assume that $b\in\overline{\cC^\alpha}$.
In particular, we can take a sequence $(b^n)_{n\in\N}\subset\cC^1$ converging to $b$ in $\cC^\alpha$
and consider the the classically well-posed rough differential equations
\begin{equ}
X^n_t=\int_{0}^t b^n(X^n_s)\,ds+\int_{0}^t\sigma(X^n_s)\,dB^H_s.
\end{equ}
Denote the first and second integral by $D^n$ and $S^n$, respectively.
Applying Corollary \ref{cor:weak-apriori}, we get
\begin{equ}\label{eq:weak-bound-beforelim}
\sup_{n\in\N}\big\|[D^n]_{\cC^{1+\alpha H^+}([0,1])}
+[(S^n,\sigma(X^n))]_{\cD^{2H_-}_{B^H}([0,1])}\big\|_{L_p(\Omega)}\leq N.
\end{equ}
To conclude tightness on a separable space, first note that since $D^n\in\cC^1$, the laws of $D^n$ are tight on the (separable) space $\overline{\cC^{1+\alpha H^{++}}}$.
Next, it follows from \cite[Exercises~4.8-4.9]{Friz-Hairer} that if $\beta\in(1/3,1/2)$, equipping the (nonlinear) space $\cQ^{\beta}$ of quartuples $(\hat B^H,\hat\bB^H,\hat S,\hat \Sigma)\in\cC^{\beta}\times\cC^{2\beta}_2\times\cC^{\beta}\times\cC^{\beta}$,
constrained by Chen's relation between the first two elements and by the relation $(\hat S,\hat\Sigma)\in\cD_{\hat B^H}^{2\beta}$,
with the metric
\begin{equs}
q&\big((\hat B^{H,1},\hat\bB^{H,1},\hat S^1,\hat \Sigma^1),(\hat B^{H,2},\hat\bB^{H,2},\hat S^2,\hat \Sigma^2)\big)
\\
&=[\hat B^{H,1}-\hat B^{H,2}]_{\cC^{\beta}}
+[\hat \bB^{H,1}-\hat \bB^{H,2}]_{\cC^{2\beta}_2}
+[\hat\Sigma^1-\hat\Sigma^2]_{\cC^{\beta}}
\\
&\quad+\sup_{0\leq s<t\leq 1}\frac{\big(\hat S^1_t-\hat S^1_s-\hat\Sigma^1_s (B^{H,1}_t-B^{H,1}_s)\big)-\big(\hat S^2_t-\hat S^2_s-\hat\Sigma^2_s (B^{H,2}_t-B^{H,2}_s)\big)}{|t-s|^{2\beta}},
\end{equs}
the closure $\overline{\cQ^{\beta}}$ of smooth quartuples in $\cQ^{\beta}$ is separable and contains all $\cQ^{\beta'}$, $\beta'>\beta$.
It is also easy to see that these inclusions are compact.
Therefore, we can conclude that the family of the laws of the septuples $Z^n=\big(X^n,D^n,W,B^H,\bB^H,S^n,\sigma(X^n)\big)$ is tight on
\begin{equ}
\cZ:=\overline{\cC^{H_-}}\times \overline{\cC^{1+\alpha H^{++}}}\times\overline{\cC^{H}}\times \overline{\cQ^{H_{--}}}.
\end{equ}
Therefore by Prokhorov's and Skorohod's theorems there exists a subsequence (for which we use the same notation), a probability space $(\bar\Omega,\bar\cF,\bar\P)$, and random variables $\bar Z^n$ such that $\bar Z^n\overset{\mathrm{law}}{=} Z^n$ and $\bar Z^n$ converges $\bar\P$-a.s. in $\cZ$ to some $\bar Z$.
It is clear (even only using pointwise convergence) that $\bar\Sigma=\sigma(\bar X)$ and that $\bar X=\bar D+\bar S$.
%In particular, $(\bar X,\sigma(\bar X))\in\cD_{\bar B^H}^{1+\alpha H^+}$.
By stability of rough integration (\cite[Theorem~4.17]{Friz-Hairer}) and the condition on the exponents we also get from the convergence that 
\begin{equ}
\bar S=\lim_{n\to\infty} \bar S^n
=\lim_{n\to \infty} \int \sigma(\bar X^n_t)\,d\bar B^{H,n}_t
=\int\sigma(\bar X_t)\,d\bar B^{H}_t.
\end{equ}
Define the filtration $\bar\bF$ by setting $\cF_t$ to be the completion of the $\sigma$-algebra generated by $\{\bar X_s,\bar D_s,\bar W_s,\bar\bB^H_{u,s}:\,u,s\leq t\}$.
It is easy to check (see e.g \cite[Section~7.3]{ART21} or \cite[Theorem~8.2]{Lucio-Mate}) that $\bar W$ is a $\bar\bF$-Wiener process and that $\bar B^H$ satisfies the Mandelbrot -- van Ness representation based on $\bar W$.
Moreover, passing to the limit in the bound \eqref{eq:weak-bound-beforelim}, we have the $\bar D,\bar S,\bar S'=\sigma(\bar X)$ satisfy the condition \eqref{eq:wantbound}.
Since all the required adaptedness properties are satisfied by definition, by Corollary \ref{cor:distr-integral} the process $\cI^{\bar X}(b)$ is well-defined.
All that remains to check is that $\bar D=\cI^{\bar X}(b)$, or in other words, $\cI^{\bar X^n}(b^n)\to\cI^{\bar X}(b)$.
To this end, we write
\begin{equ}
\cI^{\bar X^n}(b^n)-\cI^{\bar X}(b)=\big(\cI^{\bar X^n}(b^n)-\cI^{\bar X^n}(b^m)\big)+\big(\cI^{\bar X^n}(b^m)-\cI^{\bar X}(b^m)\big)+\big(\cI^{\bar X}(b^m)-\cI^{\bar X}(b)\big).
\end{equ}
If $n$ and $m$ are large enough, the first and third terms are small in $L_p(\Omega)$ thanks to Corollary \ref{cor:distr-integral} and the fact that 
$\bar D^n,\bar S^n,(\bar S^n)'=\sigma(\bar X^n)$ satisfy the condition \eqref{eq:wantbound} uniformly in $n$. The second term can be made small for any $m$ by choosing $n$ sufficiently large, simply using the convergence $\bar X^n\to\bar X$ in $\cC^{H_-}$ and the Lipschitzness of $b^m$. This finishes the proof.
\end{proof}

\section{The smooth case}
Throughout the section we fix $H\in(1,\infty)\setminus\N$ and $\alpha\in(1-1/(2H),1)$. Furthermore, fix $H^{-}\in(\lfloor H \rfloor,H)$ such that $1+\alpha H^{-}+(\alpha-2)H>0$ and $1+\alpha H^->H^-$.

\subsection{A priori bounds}
As far as the the integration of the stochastic term goes, the classical regime is of course the easiest. However, the a priori bounds are more delicate.
As observed in \cite{Mate}, estimating the processes in classical H\"older spaces is not sufficient, one should rather work with a type of `stochastic regularity' that allows one to control conditional increments. 

Similarly to before, for $K\in\N$ we define the stopping times
\begin{equs} 
\tau_K= \inf\{ t >0 : [B^H]_{\cC^{H^-}([0, t])}  \geq  K\}. 
\end{equs}
Not that by definition of $B^H$ and the $\cC^{H^{-}}$ norm, we have $[B^{H-m}_{\cdot\wedge\tau_K}]_{\cC^{H^--m}([0,1])}\leq K$ for all $m=0,1,\ldots,\lfloor H\rfloor$.
One of course has the trivial estimate $[X_{\cdot\wedge\tau_K}]_{\cC^1}\lesssim 1$ for any solution $X$.
A much higher order priori bound reads as follows. As before, we denote by $D$ and $S$ the first and second integrals in \eqref{eq:main}, respectively.
\begin{lemma}\label{lem:apriori-class}
Let $s_0\in[0,1)$, $x_0\in\R^d$, and let $b$ and $\sigma$ satisfy $\mathbf{A}_{\mathrm{smooth}}$.
Then there exists a constant $N=N(K,\alpha,H,d,d_0,M)$ such that
if $X$ is a strong solution of \eqref{eq:main}, then for all $(s,t)\in[s_0,1]^2_\leq$ one has almost surely
\begin{equ}\label{eq:apriori-class}
\|D_{t\wedge\tau_K}-\E^s D_{t\wedge\tau_K}\|_{L_\infty(\Omega)}\leq N |t-s|^{1+\alpha H^{-}},\qquad \|S_{t\wedge\tau_K}-\E^s S_{t\wedge\tau_K}\|_{L_\infty(\Omega)}\leq N|t-s|^{H^{-}}.
\end{equ}
\end{lemma}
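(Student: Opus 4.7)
The smooth regime $H > 1$ affords two crucial simplifications: $B^H$ is $\lfloor H \rfloor$ times continuously differentiable with $\partial^{\lfloor H \rfloor} B^H = B^{H-\lfloor H \rfloor}$, so both integrals in \eqref{eq:main} are classical Lebesgue integrals, and by the iterated definition the stopping at $\tau_K$ yields the a.s.\ bound $[B^{H-m}_{\cdot \wedge \tau_K}]_{\cC^{H^- - m}([0,1])} \le K$ for each $m = 0, \ldots, \lfloor H \rfloor$. In particular, $\partial_r B^H_r$ is a.s.\ bounded on $[0, \tau_K]$ and $[X_{\cdot \wedge \tau_K}]_{\cC^1([s_0,1])} \lesssim 1$ holds trivially.

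The heart of the proof is the pathwise auxiliary bound
\begin{equ}
|X_{t \wedge \tau_K} - \E^s X_{t \wedge \tau_K}| \le N|t-s|^{H^-} \quad \text{a.s.}
\end{equ}
To establish it, I would first prove the analogous bound for $B^H$. Since $B^{H-\lfloor H \rfloor}_{s \wedge \tau_K}$ is $\cF_s$-measurable, conditional Jensen's inequality combined with the pathwise H\"older bound from the stopping gives $|\E^s B^{H-\lfloor H \rfloor}_{u \wedge \tau_K} - B^{H-\lfloor H \rfloor}_{s \wedge \tau_K}| \le \E^s|B^{H-\lfloor H\rfloor}_{u \wedge \tau_K} - B^{H-\lfloor H\rfloor}_{s \wedge \tau_K}| \le K|u-s|^{H^- - \lfloor H \rfloor}$ a.s., hence $|B^{H-\lfloor H \rfloor}_{u \wedge \tau_K} - \E^s B^{H-\lfloor H \rfloor}_{u \wedge \tau_K}| \le 2K|u-s|^{H^- - \lfloor H \rfloor}$ a.s. Propagating this through the $\lfloor H \rfloor$ iterated integrations defining $B^H$ lifts the exponent to $H^-$. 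Transferring to $X$ proceeds via a Taylor expansion of \eqref{eq:main} and a buckling argument on $\Phi(r) := \|X_r - \E^s X_r\|_{L_\infty(\Omega)}$: the $\cC^\alpha$-H\"older continuity of $b$ controls the drift contribution $\int_s^r (b(X_u) - \E^s b(X_u))\,du$ by $\int_s^r \Phi(u)^\alpha\,du$, and the assumption $\alpha > 1 - 1/(2H)$ makes this subcritical relative to the stochastic fluctuation $\sigma(X_s)[B^H_r - \E^s B^H_r] \lesssim |r-s|^{H^-}$.

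Given the auxiliary estimate, both claimed bounds follow rapidly. Since $D_{t \wedge \tau_K}$ and $S_{t \wedge \tau_K}$ are $\cF_s$-measurable on $\{\tau_K \le s\}$, the conditional fluctuations vanish there; on $\{s < \tau_K\}$, for the drift I would write $D_{t \wedge \tau_K} - \E^s D_{t \wedge \tau_K} = \int_s^{t \wedge \tau_K}(b(X_r) - \E^s b(X_r))\,dr$ and bound the integrand a.s.\ by $2[b]_{\cC^\alpha}\|X_r - \E^s X_r\|_{L_\infty(\Omega)}^\alpha \lesssim |r-s|^{\alpha H^-}$, which integrates to the required $|t-s|^{1+\alpha H^-}$. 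For the stochastic integral, splitting off the leading $\cF_s$-measurable factor $\sigma(X_{s \wedge \tau_K})$ leaves the dominant term $\sigma(X_{s \wedge \tau_K})[B^H_{t \wedge \tau_K} - \E^s B^H_{t \wedge \tau_K}]$ of size $|t-s|^{H^-}$; the remainder involving $\sigma(X_r) - \sigma(X_{s \wedge \tau_K})$ and $\partial_r B^H_r$ is of higher order (with further Taylor expansion needed if $H > 2$, so that the remainder beats $|t-s|^{H^-}$).

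The main obstacle is the pathwise propagation of the conditional fluctuation bound through the iterated integrals defining $B^H$: the integration limits $t \wedge \tau_K$ are random and not $\cF_s$-measurable, so one cannot simply commute $\E^s$ with integration. This requires separating the contributions on $\{t \le \tau_K\}$ (where the integral runs to the deterministic time $t$) from those on $\{\tau_K \in (s, t]\}$ (where the stopped process is frozen at $\tau_K$), and handling each through a pathwise H\"older estimate made possible by the stopping.
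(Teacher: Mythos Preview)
Your approach is essentially correct and shares the same core mechanism as the paper's proof: an iterative improvement of the exponent in the conditional fluctuation bound, fuelled by the H\"older regularity of $b$ and the conditional oscillation of $B^{H-1}$. There are, however, two organisational differences worth noting.

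First, the paper iterates directly on the pair of exponents $(\theta_1,\theta_2)$ for $\|D_{t\wedge\tau_K}-\E^s D_{t\wedge\tau_K}\|_{L_\infty}$ and $\|S_{t\wedge\tau_K}-\E^s S_{t\wedge\tau_K}\|_{L_\infty}$ via the explicit map
\[
(\theta_1,\theta_2)\mapsto\bigl(1+\alpha(\theta_1\wedge\theta_2),\,(1+\alpha(\theta_1\wedge\theta_2))\wedge H^-\bigr),
\]
starting from $(0,0)$ and observing that the fixed point $(1+\alpha H^-,H^-)$ is reached in finitely many steps provided $\alpha>1-1/H^-$. Your ``buckling'' on $\Phi(r)=\|X_r-\E^s X_r\|_{L_\infty}$ encodes the same iteration, but is less explicit about why the nonlinear inequality $\Phi(r)\lesssim\int_s^r\Phi(u)^\alpha\,du+|r-s|^{H^-}$ closes; since $\alpha<1$ this is not a standard Gronwall and really does require the discrete bootstrap you only hint at with the word ``subcritical''.

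Second, and more substantively, for $S$ you freeze $\sigma$ at time $s$, isolating the leading term $\sigma(X_{s\wedge\tau_K})[B^H_{t\wedge\tau_K}-\E^s B^H_{t\wedge\tau_K}]$ and then invoking higher-order Taylor expansions when $H>2$ to control the remainder. The paper avoids this entirely: using the elementary inequality $\|Z-\E^s Z\|_{L_\infty}\le 2\|Z-Y\|_{L_\infty}$ for $\cF_s$-measurable $Y$, it replaces the integrand $\sigma(X_{r\wedge\tau_K})B^{H-1}_{r\wedge\tau_K}$ by $\sigma(\E^s X_{r\wedge\tau_K})\E^s B^{H-1}_{r\wedge\tau_K}$, yielding the uniform bound
\[
\big\|\sigma(X_{r\wedge\tau_K})B^{H-1}_{r\wedge\tau_K}-\sigma(\E^s X_{r\wedge\tau_K})\E^s B^{H-1}_{r\wedge\tau_K}\big\|_{L_\infty}\lesssim\|X_{r\wedge\tau_K}-\E^s X_{r\wedge\tau_K}\|_{L_\infty}+|r-s|^{H^--1},
\]
which only needs $\sigma\in\cC^1$ and works for all $H>1$ without any case distinction. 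The same device (choosing $Y$ with the indicator decomposition $\bone_{s>\tau_K}\cdots+\bone_{s\le\tau_K}\cdots$) also dispatches the random-integration-limit obstacle you correctly flag at the end.
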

\begin{proof}
Suppose that the bounds \eqref{eq:apriori-class} hold with some $\theta_1,\theta_2\geq 0$ in place of $1+\alpha H^{-}$, $H^{-}$, respectively.
This is certainly the case with $\theta_1=\theta_2=0$ thanks to the boundedness of $b$, $\sigma$, and (due to the stopping) of $B^{H-1}$. 
Recall that for any two bounded random variables $X$, $Y$, if $Y$ is $\cF_s$-measurable then $\|X-\E^s X\|_{L_\infty(\Omega)}\leq 2\|X-Y\|_{L_\infty(\Omega)}$.
Since the random variable $\bone_{s>\tau_K}D_{\tau_K}$ is $\cF_s$-measurable, we can therefore write
\begin{equs}
\|&D_{t\wedge\tau_K}-\E^s D_{t\wedge\tau_K}\|_{L_\infty(\Omega)}
\\
&\leq 
2\Big\|\bone_{s>\tau_K}D_{\tau_K}+\bone_{s\leq\tau_K}D_{t\wedge\tau_K}-\bone_{s>\tau_K}D_{\tau_K}-\bone_{s\leq\tau_K}\Big(\int_{s_0}^s b(X_r)\,dr-\int_s^{t\wedge\tau_K} b(\E^s X_r)\,dr\Big)\Big\|_{L_\infty(\Omega)}
\\
&=2\Big\|\bone_{s\leq\tau_K}\int_s^{t\wedge\tau_K}b(X_{r\wedge\tau_K})- b(\E^s X_{r\wedge\tau_K})\,dr\Big\|_{L_\infty(\Omega)}
\\
&\lesssim |t-s|^{1+\alpha(\theta_1\wedge\theta_2)},
\end{equs}
using the hypothesis on $\theta_1,\theta_2$ in the last step.
As for the stochastic part, we can proceed similarly to first get a bound
\begin{equs}
\|&S_{t\wedge\tau_K}-\E^s S_{t\wedge\tau_K}\|_{L_\infty(\Omega)}
\leq 2\Big\|\bone_{s\leq\tau_K}\int_s^{t\wedge\tau_K}\sigma(X_{r\wedge\tau_K})B^{H-1}_{r\wedge\tau_K}- \sigma(\E^s X_{r\wedge\tau_K})\E^s B^{H-1}_{r\wedge\tau_K}\,dr\Big\|_{L_\infty(\Omega)}.
\end{equs}
Note that 
\begin{equ}
\|B^{H-1}_{r\wedge\tau_K}-\E^s B^{H-1}_{r\wedge\tau_K}\|_{L_\infty(\Omega)}\leq 2|r-s|^{H^{-}-1}\big\|[B_{\cdot\wedge\tau_K}^{H-1}]_{C^{H^--1}([0,1])}\big\|_{L_\infty(\Omega)}\leq |r-s|^{H^{-}-1}K.
\end{equ}
This and the hypothesis on $\theta_1,\theta_2$ therefore yields
\begin{equ}
|S_{t\wedge\tau_K}-\E^s S_{t\wedge\tau_K}|\lesssim|t-s|^{\big(1+\alpha(\theta_1\wedge\theta_2)\big)\wedge H^{-}}.
\end{equ}
It is elementary to see that iterating the mapping $(\theta_1,\theta_2)\mapsto(1+\alpha(\theta_1\wedge\theta_2),\big(1+\alpha(\theta_1\wedge\theta_2)\big)\wedge H^{-})$,
starting from $(0,0)$, one reaches the value $(1+\alpha H^{-},H^{-})$ after finitely many steps (this is true in fact even under the weaker condition $\alpha>1-1/H^{-}$), finishing the proof.
\end{proof}

The other change compared to the $H<1$ case is that when approximating the stochastic integral, simply freezing the diffusion coefficient is not sufficient.
We will need a more complicated approximation, which in
turn leads to a somewhat more involved version of \eqref{eq:very-basic2}.

\begin{proposition}\label{prop:ugly-covariance}
Let $(s,t)\in[0,1]^2_\leq$ and let $Y=(Y_r)_{r\in[s,t]}$ be an $\cF_s$-measurable $\R^{d\times d_0}$-valued random process. Then conditionally on $\cF_s$, the vector
\begin{equ}
Z=\int_s^t Y_r B^{H-1}_r\,dr
\end{equ}
is Gaussian with covariance matrix
\begin{equs}
%\E^s Z&=\int_s^t Y_r \E^sB^{H-1}_r\,dr,\\
\Gamma[Y](s,t)&=\E^s\big(\big(Z-\E^s Z)\big(Z-\E^s Z)^\top\big)
\\
&=c'(H)\int_s^t\int_u^t\int_u^tY_vY_{v'}^\top|v-u|^{H-3/2}|v'-u|^{H-3/2}\,dv\,dv'\,du,\label{eq:ugly-covariance}
\end{equs}
where $c'(H)$ is a positive constant depending only on $H$.

Moreover, there exist a constant $N=N(H)$ such that for any two processes $Y^1,Y^2$ as above, one has the a.s. bound 
\begin{equ}\label{eq:ugly-cov-stability}
\big|\Gamma[Y^1](s,t)-\Gamma[Y^2](s,t)\big|\leq N|t-s|^{2H}\big(\|Y_1\|_{\cC^0([s,t])}+\|Y^2\|_{\cC^0([s,t])}\big)\|Y^1-Y^2\|_{\cC^0([s,t])}.
\end{equ}

Let furthermore $\lambda>0$, $\tilde K\in\R$, and $\bar Y$ be an $\cF_s$-measurable $\R^{d\times d_0}$-valued random variable such that
$\bar Y\bar Y^\top\succeq \lambda\I$ and $|Y|\leq \tilde K$ a.s.
Then there exist constants $c''(H)$ and $\delta=\delta(H,d,d_0,\lambda,\tilde K)$ such that if $\|Y-\bar Y\|_{\cC^0}\leq \delta$ a.s., then $\Gamma[Y](s,t)\succeq c''(H)\lambda|t-s|^{2H}\I$ a.s.
\end{proposition}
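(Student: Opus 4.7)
The plan rests on a single structural observation: for $r\geq s$ the conditional increment $B^{H-1}_r-\E^s B^{H-1}_r$ should admit a Riemann--Liouville type Wiener integral representation driven purely by the post-$s$ increments of $W$. Concretely, the first step is to establish the identity
\begin{equ}\label{eq:sketchrep-proposal}
B^{H-1}_r-\E^s B^{H-1}_r \;=\; c(H)\int_s^r (r-u)^{H-3/2}\,dW_u
\end{equ}
for $r\geq s$ and some constant $c(H)>0$. When $H\in(1,2)$ this is immediate from the Mandelbrot--van Ness representation of the fBm $B^{H-1}$ of Hurst parameter $H-1\in(0,1)$, after splitting the stochastic integral at time $s$ and observing that the $(-\infty,s]$ piece is $\cF_s$-measurable. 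When $H>2$ the process $B^{H-1}$ is defined iteratively through $B^{H-1}_r=\int_0^r B^{H-2}_{r_1}\,dr_1$ (with further iterations if $H-1>2$), and \eqref{eq:sketchrep-proposal} then follows by induction in $\lfloor H\rfloor$: apply the hypothesis to $B^{H-2}$ and swap the deterministic $dr_1$-integral with the Wiener integral via classical Fubini, computing $\int_u^r (r_1-u)^{H-5/2}\,dr_1$ explicitly.

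Armed with \eqref{eq:sketchrep-proposal}, the Gaussianity statement and the covariance formula follow quickly. Since $Y_r$ is $\cF_s$-measurable for every $r\in[s,t]$, conditioning on $\cF_s$ turns it into a deterministic bounded function, and a stochastic Fubini argument (justified pathwise in $Y$ after conditioning, using that the post-$s$ Brownian motion is independent of $\cF_s$) yields
\begin{equ}
Z-\E^s Z = c(H)\int_s^t \Big(\int_u^t Y_r(r-u)^{H-3/2}\,dr\Big)\,dW_u.
\end{equ}
This is a Wiener integral with an $\cF_s$-measurable integrand against the $\cF_s$-independent Brownian motion $(W_v-W_s)_{v\geq s}$, and hence conditionally on $\cF_s$ it is a centred Gaussian vector whose covariance is delivered directly by It\^o's isometry:
\begin{equ}
\Gamma[Y](s,t)=c(H)^2\int_s^t\Big(\int_u^t Y_r(r-u)^{H-3/2}\,dr\Big)\Big(\int_u^t Y_{r'}(r'-u)^{H-3/2}\,dr'\Big)^{\top}du,
\end{equ}
which is exactly \eqref{eq:ugly-covariance} with $c'(H)=c(H)^2$.

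The stability estimate \eqref{eq:ugly-cov-stability} I would obtain by expanding $Y^1_v Y^1_{v'}{}^{\top}-Y^2_v Y^2_{v'}{}^{\top}=(Y^1_v-Y^2_v)Y^1_{v'}{}^{\top}+Y^2_v(Y^1_{v'}-Y^2_{v'})^{\top}$ inside the triple integral, pulling out the uniform bounds on $Y^i$ and on $Y^1-Y^2$, and invoking the elementary identity $\int_s^t\!\int_u^t\!\int_u^t (v-u)^{H-3/2}(v'-u)^{H-3/2}\,dv\,dv'\,du=\kappa(H)(t-s)^{2H}$. For the lower bound one first specialises to $Y\equiv\bar Y$: the deterministic factor $\bar Y\bar Y^{\top}$ pulls out of the triple integral, producing $\Gamma[\bar Y](s,t)=c''(H)\bar Y\bar Y^{\top}(t-s)^{2H}\succeq c''(H)\lambda(t-s)^{2H}\I$. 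Applying \eqref{eq:ugly-cov-stability} with $Y^1=Y$, $Y^2=\bar Y$ bounds the perturbation by $N(2\tilde K+\delta)\delta(t-s)^{2H}$, so choosing $\delta$ small depending only on $H,d,d_0,\lambda,\tilde K$ absorbs it into half of the ellipticity constant and yields $\Gamma[Y](s,t)\succeq\tfrac12 c''(H)\lambda(t-s)^{2H}\I$. The one genuinely delicate point in this program is the inductive derivation of \eqref{eq:sketchrep-proposal} for $H>2$ together with the careful conditional justification of the Fubini swap for the random integrand $Y$; everything else is a direct calculation.
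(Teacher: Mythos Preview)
Your proposal is correct and follows essentially the same route as the paper. The only cosmetic difference is that you package the key identity \eqref{eq:sketchrep-proposal} as an induction on $\lfloor H\rfloor$, whereas the paper writes out the full iterated integral representation of $B^{H-1}_r-\E^s B^{H-1}_r$ in one shot and then collapses the intermediate $du_1\cdots du_{\lfloor H\rfloor-1}$ integrals to recover the $(r-u)^{H-3/2}$ kernel; both arrive at the same Wiener integral and proceed identically via It\^o isometry, and your treatment of the stability bound and the ellipticity lower bound matches the paper's exactly.
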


\begin{proof}
The conditional Gaussianity of $Z$ is trivial.
As for its conditional covariance matrix, we can write
\begin{equs}
Z-\E^s Z&=\int_s^t Y_{u_{\floor{H}}}\int_{s\leq u_0\leq u_1\leq\cdots \leq u_{\floor{H}}}	|u_1-u_0|^{H-\floor{H}-1/2}\,dW_{u_0}\,d u_1\cdots	\,du_{\floor{H}}
\\
&=\int_{s\leq u_0\leq u_1\leq\cdots \leq u_{\floor{H}}\leq t} Y_{u_{\floor{H}}}|u_1-u_0|^{H-\floor{H}-1/2}\,d u_1\cdots	\,du_{\floor{H}}\,dW_{u_0}
\\
&=\sqrt{c'(H)}\int_{s\leq u_0\leq u_{\floor{H}}\leq t}Y_{u_{\floor{H}}}|u_{\floor{H}}-u_0|^{H-3/2}\,du_{\floor{H}}\,dW_{u_0},
\end{equs}
where $c'(H)$ is defined by the last equality.
It\^o's isometry yields \eqref{eq:ugly-covariance}.

The bound \eqref{eq:ugly-cov-stability} follows directly from \eqref{eq:ugly-covariance}.

Concerning the last claim, it follows viewing $\bar Y$ as a constant in time process, we have
\begin{equs}
\Gamma[\bar Y](s,t)&=c'(H)\bar Y\bar Y^\top\int_s^t\int_u^t\int_u^t|v-u|^{H-3/2}|v'-u|^{H-3/2}\,dv\,dv'\,du
\\
&=c'''(H)\bar Y\bar Y^\top|t-s|^{2H}
\end{equs}
with another positive constant $c'''(H)$. This yields the claim for $\bar Y$ in place of $Y$, and to get the claim for $Y$ as stated, it remains to use \eqref{eq:ugly-cov-stability}.
\end{proof}

\subsection{Strong uniqueness and stability}\label{sec:stabil-smooth}
Throughout Section \ref{sec:stabil-smooth} we fix $K\in\N$, use the shorthand $\tau=\tau_K$, and for a solution of $X$ \eqref{eq:main} we define $\tilde X$ as in \eqref{eq:tildeX}.
As in the previous cases, a certain conditional regularity seminorm of $B^H$ appears, so this time we set the shorthand
\begin{equ}
\,[B^H]_{C^1|\bF}=\sup_{s,t\in[0,1]}\big|\E^s B^{H-1}_t\big|.
\end{equ}
\begin{lemma}             \label{lem:Regularisation-smooth}
Let  $p \geq 2$, $s_0\in[0,1)$, $x_0\in\R^d$, $(S,T)\in[s_0,1]^2_\leq$, and let $b$ and $\sigma$ satisfy $\mathbf{A}_{\mathrm{smooth}}$. Let  $X$ be a strong solution of \eqref{eq:main}.
Then there exists constants $N= N(\alpha, H, d,d_0, M,\lambda, p, K)$ and $\eps=\eps(\alpha,H)>0$ such that for all $f \in \cC^\alpha$, all adapted stochastic processes $Z$, and all $(s,t)\in[S,T]^2_\leq$, the  following bound holds
\begin{equs}
\Big\| \int_s^t & \big( f(\tilde{X}_r + Z_r) -f(\tilde{X}_r)\big) \, dr \Big\|_{L_p(\Omega)}
   \leq N  \| f\|_{\cC^\alpha}\| Z\|_{\scC^0_p([S,T])} |t-s|^{1/2+\eps}
\\
&\qquad\qquad+ N \|f\|_{\cC^\alpha}\Big(\big\|(1+ [ B^H]_{\cC^{1}|\bF})Z\big\|_{\scC^0_p([S,T])}
+[Z]_{\scC_p^{1/2}([S,T])}\Big)|t-s|^{1+\eps}.\label{eq:one-big-bound-smooth}
\end{equs}
\end{lemma}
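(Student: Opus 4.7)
The plan is to follow the template of Lemma \ref{lem:Regularisation}: normalise $\|f\|_{\cC^\alpha}=1$, introduce a conditionally Gaussian approximation $\Xi_{s,r}$ to $\tilde X_r$ based at $s$, apply the shifted stochastic sewing lemma (Lemma \ref{lem:SSL}) to the germ
\[
A_{s,t} = \E^{s_-}\int_s^t\big(f(\Xi_{s_-,r}+Z_{s_-})-f(\Xi_{s_-,r})\big)\,dr
\]
and target $\cA_t = \int_S^t(f(\tilde X_r+Z_r)-f(\tilde X_r))\,dr$, and verify the three conditions of the lemma. The approximation $\Xi_{s,r}$ will be the direct analogue of the Young case, namely $\Theta_{s,r}+\Sigma_s+\sigma(X_{s\wedge\tau})(B^H_r-B^H_s)$, with $\hat\Xi_{s,u,r}=\E^u\Xi_{s,r}$, and with $\Theta,\Sigma$ as defined in the proof of Lemma \ref{lem:Regularisation}.

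The conditional Gaussian structure is now supplied by Proposition \ref{prop:ugly-covariance} rather than by \eqref{eq:very-basic2}: since $B^H_r-B^H_s=\int_s^r B^{H-1}_v\,dv$, taking $Y\equiv\sigma(X_{s\wedge\tau})$ constant yields that conditionally on $\cF_u$ the vector $\Xi_{s,r}$ is Gaussian with mean $\hat\Xi_{s,u,r}$ and covariance $\Gamma(s,u,r)$ of order $(r-u)^{2H}\sigma\sigma^\top(X_{s\wedge\tau})$, which is uniformly elliptic by $\mathbf{A}_{\mathrm{smooth}}$ and the third conclusion of Proposition \ref{prop:ugly-covariance}. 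Hence $\E^u f(\Xi_{s,r})=\cP_{\Gamma(s,u,r)}f(\hat\Xi_{s,u,r})$ and the heat-kernel estimates of Propositions \ref{prop:HK-Gamma}--\ref{prop:HK-Gammadiff} apply unchanged. Conditions \eqref{eq:SSL-cond1} and \eqref{eq:SSL-cond3} then follow exactly as in the Young case; for the latter it is convenient that $\tilde X$ is a priori $\cC^1$, so $[\tilde X]_{\scC^{1/2}_p}\lesssim 1$ and $|\Xi_{s,r}-\tilde X_s|\lesssim (r-s)$ hold trivially.

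The bulk of the work will be \eqref{eq:SSL-cond2}, where after the standard splitting $\E^{s_-}\delta A_{s,u,t}=I^1+I^2$, each piece splits further into $J^1_r+J^2_r$. The $J^2_r$ contribution depends only on the heat-kernel stability in the covariance parameter together with the stability estimate \eqref{eq:ugly-cov-stability} for $\Gamma$ and on the H\"older modulus of $s\mapsto X_{s\wedge\tau}$, so it goes through verbatim. For $J^1_r$ the goal is an a.s.\ bound of the form $|\hat\Xi_{s_-,s,r}-\hat\Xi_{s,s,r}|\lesssim (1+[B^H]_{\cC^1|\bF})^\gamma (t-s)^{1+\alpha H^-}$ for some fixed $\gamma$. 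Together with the heat-kernel factor $(r-s)^{(\alpha-2)H}$ and integration over $r\in[u,t]$, this produces a total exponent $2+\alpha H^-+(\alpha-2)H$, strictly greater than $1$ by the defining property of $H^-$, so that $N_2$ matches the right-hand side of \eqref{eq:one-big-bound-smooth}.

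The central difficulty is extracting the exponent $1+\alpha H^-$ with $H^->1$ in the decomposition of $|\hat\Xi_{s_-,s,r}-\hat\Xi_{s,s,r}|$ into $\Theta$-difference, stochastic remainder $\Sigma_{s_-}-\Sigma_s+\sigma(X_{s_-\wedge\tau})(B^H_s-B^H_{s_-})$, and the $\sigma$-difference term multiplied by $\E^s(B^H_r-B^H_s)$. The naive pathwise estimates using only $[X_{\cdot\wedge\tau}]_{\cC^1}\lesssim 1$ deliver at best $(t-s)^{1+\alpha}$, which is insufficient precisely because $\alpha H^->\alpha$. The refined conditional $L_\infty$-regularity provided by Lemma \ref{lem:apriori-class}---itself obtained by bootstrapping a chain of such bounds---is what closes this gap, reflecting the smooth-regime slogan from \cite{Mate} that in the absence of noise-induced irregularity the regularising mechanism must be uncovered at the level of conditional increments rather than pathwise H\"older increments.
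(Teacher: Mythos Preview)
Your overall sewing template is right, but the concrete choice of approximation
\[
\Xi_{s,r}=\Theta_{s,r}+\Sigma_s+\sigma(X_{s\wedge\tau})(B^H_r-B^H_s)
\]
with the Young-case $\Theta,\Sigma$ does not close in the smooth regime. The paper in fact remarks (just before Proposition \ref{prop:ugly-covariance}) that ``simply freezing the diffusion coefficient is not sufficient'' when $H>1$, and chooses instead
\[
\Xi_{s,r}=\int_{s_0}^r b(\E^s X_{v\wedge\tau})\,dv+\int_{s_0}^r\sigma(\E^s X_{v\wedge\tau})B^{H-1}_v\,dv,
\]
with the associated $\hat\Gamma(s,u,r)=\Gamma[\sigma(\E^s X_{\cdot\wedge\tau})](u,r)$. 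The point is the following: with your $\Xi$, the three pieces of $\hat\Xi_{s_-,s,r}-\hat\Xi_{s,s,r}$ all involve ordinary increments such as $X_{v\wedge\tau}-X_{s_-\wedge\tau}$ or $X_{s\wedge\tau}-X_{s_-\wedge\tau}$. These are controlled pathwise only at rate $|v-s_-|$ (Lipschitz $X$), so the best you can extract is $(t-s)^{1+\alpha}$ for the $\Theta$-piece and $(t-s)^2$ for the $\Sigma$-remainder and the $\sigma$-difference piece. Neither exceeds $(2-\alpha)H$ in general under the sole assumption $\alpha>1-1/(2H)$ with $H>1$, so after multiplying by $(t-s)^{(\alpha-2)H}$ and integrating you do not get an exponent $>1$.

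Lemma \ref{lem:apriori-class} does not rescue this, because it bounds conditional increments $X_{t\wedge\tau}-\E^s X_{t\wedge\tau}$, not pathwise increments $X_{t\wedge\tau}-X_{s\wedge\tau}$; writing $X_t-X_s=(X_t-\E^s X_t)+(\E^s X_t-X_s)$, the second summand is still only of order $|t-s|$. The paper's construction puts $\E^s X_{v\wedge\tau}$ inside $b$ and $\sigma$ precisely so that the differences $\hat\Xi_{s,s,r}-\hat\Xi_{s_-,s,r}$ become quantities like
\[
\int_{s_-}^s\big(\sigma(X_{v\wedge\tau})-\sigma(\E^{s_-}X_{v\wedge\tau})\big)B^{H-1}_v\,dv,
\qquad
\int_s^r\big(\sigma(\E^s X_{v\wedge\tau})-\sigma(\E^{s_-}X_{v\wedge\tau})\big)\E^s B^{H-1}_v\,dv,
\]
for which Lemma \ref{lem:apriori-class} directly gives the rate $(t-s)^{1+H^-}$ (hence $(t-s)^{1+\alpha H^-}$ after the $\alpha$-H\"older continuity of $b$ in the $\Theta$-piece). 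That time-varying coefficient is also why the full covariance functional $\Gamma[\cdot]$ and the ellipticity conclusion of Proposition \ref{prop:ugly-covariance} (which needs the restriction $|t-s|\leq\delta$) are used rather than the trivial constant-$Y$ case you invoke.
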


\begin{proof}
We may and will assume $\|f\|_{\cC^\alpha}=1$.
For $S\leq s\leq u\leq r\leq t\leq T$, set
\begin{equs}
\Theta_{s, r} & = \int_{s_0}^rb(\E^s X_{v  \wedge \tau})  \, d v ,
\\
\Sigma_{s,r} &=\int_{s_0}^r \sigma(\E^s X_{v\wedge\tau})B^{H-1}_v\,dv,
\\
\Xi_{s,r}  &= \Theta_{s, r} +\Sigma_{s,r},
\\
 \hat {\Xi}_{s,u, r} &= \Theta_{s, r}  +\Sigma_{s,u}+ \int_u^r\sigma(\E^s X_{v\wedge\tau})\E^uB^{H-1}_v\,dv.
\end{equs}
We wish to apply stochastic sewing in the form of Lemma \ref{lem:SSL} with the processes
\begin{equs}
A_{s, t}&= \E^{s_-} \int_s^t f(\Xi_{s_-, r} +Z_{s_-}) -f(\Xi_{s_-, r}) \, dr,
\\
\cA_t&=\int_S^t \big(f(\tilde{X}_r + Z_r) -f(\tilde{X}_r)\big) \, dr.
\end{equs}
with $s_-=s-(t-s)$ as before.
We shall restrict to pairs $(s,t)$ satisfying $|t-s|\leq \delta$ for some $\delta>0$ to be soon chosen (and depending only on the parameters of $N$),
by Remark \ref{rem:SSL-triv} this is not a restriction.
Let us denote
$$
\hat\Gamma(s,u, r)= \Gamma\big[\sigma(\E^s X_{\cdot\wedge\tau})\big](u,r),
$$ 
where we use the notation $\Gamma[\cdot](\cdot,\cdot)$ from \eqref{eq:ugly-covariance}.
Note that by Proposition \ref{prop:ugly-covariance}, for $s\leq u\leq r$ one has the identity
$
\E^u f(\Xi_{s,r})=\cP_{\hat\Gamma(s,u,r)}f(\hat \Xi_{s,u,r}).
$
Let us furthermore use the last statement of Proposition \ref{prop:ugly-covariance}, with $\bar Y=\sigma(X_{s\wedge\tau})$ and $Y_\cdot=\sigma(\E^s X_{\cdot\wedge\tau})$. The required properties of $\bar Y$ are trivially satisfied thanks to the assumption on $\sigma$.
As for the difference $\|Y-\bar Y\|_{C^0}$, we have for all $v\in[u,r]$
\begin{equ}
|Y_v-\bar Y|\lesssim |\E^s X_{v\wedge\tau}-X_{s\wedge\tau}|\leq|X_{v\wedge\tau}-X_{s\wedge\tau}|\lesssim |r-s|,
\end{equ}
simply using the boundedness of $b$, $\sigma$, and the stopped $B^{H-1}$ in the last step.
Therefore if $|r-s|$ is sufficiently small, then the condition of Proposition \ref{prop:ugly-covariance} is met and we have
\begin{equ}\label{eq:smooth-cov-lowerbound}
\|\hat\Gamma(s,u,r)^{-1}\|\lesssim |r-u|^{2H}.
\end{equ}
We can then proceed similarly to before, the first bound follows easily:
we have a.s.
\begin{equs}
|A_{s, t}|& = \Big|\int_s^t \cP_{\hat\Gamma(s_-,s_-, r)}f (\hat{\Xi}_{s_-,s_-,  r}+Z_{s_-} )- \cP_{\hat\Gamma(s_-,s_-, r)}f (\hat{\Xi}_{s_-,s_-, r}) \, dr\Big| 
\\
&\leq |Z_{s_-}|\int_s^t[\cP_{\hat\Gamma(s_-,s_-, r)}f]_{\cC^1}\,dr 
\\
& \lesssim |Z_{s_-}| \int_s^t (r-s)^{(\alpha-1)H}   \, dr, 
\end{equs}
making use of the bound \eqref{eq:smooth-cov-lowerbound} to get the last line.
After taking $L_p(\Omega)$ norms, we get
\begin{equs}
\| A_{s, t} \|_{L_p(\Omega)} \lesssim \| Z\|_{\scC^0_p([S,T])} (t-s)^{1+(\alpha-1)H}.
\end{equs}
Since $1+(\alpha-1)H>1/2$, condition \eqref{eq:SSL-cond1} is satisfied with $N_1=N  \| Z\|_{\scC^0_p([S,T])}$.

Moving on to \eqref{eq:SSL-cond2}, we proceed similarly to the proof of Lemma \ref{lem:Regularisation} and aim to bound the quantities $J^1$ and $J^2$, introduced in \eqref{eq:def-J1} and \eqref{eq:def-J2}, respectively. 
For $J^1$, we use the bound \eqref{eq:est-F1}, whose second line is now justified by \eqref{eq:smooth-cov-lowerbound}. The only real difference is how we estimate the term $|\hat{\Xi}_{s_-,s, r}-\hat{\Xi}_{s,s, r}|$, whose definition has changed compared to Lemma \ref{lem:Regularisation}: this time we have
\begin{equs}
\hat{\Xi}_{s,s, r}-\hat{\Xi}_{s_-,s, r}=\Theta_{s,r}-\Theta_{s_-,r}&+\Sigma_{s,s}+\int_s^r\sigma(\E^s X_{v\wedge\tau})\E^sB^{H-1}_v\,dv
\\
&\quad-\Sigma_{s_-,s}-\int_s^r\sigma(\E^{s_-}X_{v\wedge\tau})\E^s B^{H-1}_v\,dv.
\end{equs}
First, we have
\begin{equs}
|\Theta_{s,r}-\Theta_{s_-,r}|&=\Big| \int_{s_-}^rb(\E^{s}X_{v\wedge\tau})-b(\E^{s_-}X_{v\wedge\tau})\,dv\Big|
\\
&\lesssim\int_{s_-}^r\big|\E^{s}X_{v\wedge\tau}-\E^{s_-}X_{v\wedge\tau}\big|^\alpha\,dv
\lesssim |t-s|^{1+\alpha H^-},
\end{equs}
using Lemma \ref{lem:apriori-class} in the last step.
Second, we write
\begin{equ}
\big|\Sigma_{s,s}-\Sigma_{s_-,s}|=\Big|\int_{s_-}^s\big(\sigma(X_{v\wedge\tau})-\sigma(\E^{s_-}X_{v\wedge\tau})\big)B^{H-1}_v\,dv\Big|\lesssim |t-s|^{1+H^-}[B^H]_{\cC^1}.
\end{equ}
And finally, we can write
\begin{equ}
\Big|\int_s^r\big(\sigma(\E^s X_{v\wedge\tau})-\sigma(\E^{s_-} X_{v\wedge\tau})\big)\E^{s}B^{H-1}_v\,dv\Big|\lesssim|t-s|^{1+H^-}[B^H]_{\cC^1|\bF}.
\end{equ}
Therefore we get the a.s. bound 
\begin{equ}
|\hat{\Xi}_{s_-,s, r}-\hat{\Xi}_{s,s, r}|\lesssim|t-s|^{1+\alpha H^-}\big(1+[B^H]_{\cC^1|\bF}\big),
\end{equ}
and using this in \eqref{eq:est-F1} we conclude
\begin{equ}\label{eq:J1bound-smooth}
\|J^1_r\|_{L_p(\Omega)}\lesssim\big\|(1+ [ B]_{\cC^{1}|\bF})Z\big\|_{\scC^0_p([S,T])}(t-s)^{1+\alpha H^-+(\alpha-2) H}
+[Z]_{\scC_p^{1/2}([S,T])}(t-s)^{1/2+(\alpha-1)H}.
\end{equ}
For $J^2$ we write 
\begin{equs}
|J^2_r| &\leq    |Z_s |  \big[ \cP_{\hat\Gamma(s_-,s, r)}f- \cP_{\hat\Gamma(s,s, r)}f\big]_{\cC^1} 
\\
&\lesssim  |Z_s | \big|\hat\Gamma(s_-,s, r)- \hat\Gamma(s,s, r)\big| \big(\big| \hat\Gamma(s_-,s, r)^{-1}\big| \vee\big|\hat \Gamma(s,s, r)^{-1}\big|\big)^{3/2-\alpha/2 } 
\\
 &\lesssim |Z_s | \big|\hat\Gamma(s_-,s, r)- \hat\Gamma(s,s, r)\big| (r-s)^{-3H+H \alpha}.
\end{equs}
By the definition of $\hat \Gamma$ and \eqref{eq:ugly-cov-stability} we have
\begin{equs}
\big|\hat\Gamma(s_-,s, r)- \hat\Gamma(s,s, r)\big|&\lesssim|r-s|^{2H}\big\|\sigma(\E^{s_-} X_{\cdot\wedge\tau})-\sigma(\E^s X_{\cdot\wedge\tau})\big\|_{\cC^0([s,r])}
\\
&\lesssim |t-s|^{2H+H^{-}},
\end{equs}
using Lemma \ref{lem:apriori-class} in the last step.
Therefore we can conclude
\begin{equ}\label{eq:J2bound-smooth}
\|J^2_r\|_{L_p(\Omega)}\lesssim \|Z\|_{\scC^0_p([S,T])}|t-s|^{H^--H+\alpha H}.
\end{equ}
By assumption, all of $1+\alpha H^-+(\alpha-2) H$, $1/2+(\alpha-1)H$, and $H^--H+\alpha H$ are positive. Therefore as in the proof of Lemma \ref{lem:Regularisation}, from \eqref{eq:J1bound-smooth}-\eqref{eq:J2bound-smooth} we get the desired bound on $\E^{s_-}\delta A_{s,u,t}$:
\begin{equ}
\| \E^s \delta A_{s, u, t} \|_{L_p(\Omega)}
\lesssim \Big(\big\|(1+ [ B^H]_{\cC^{1}|\bF})Z\big\|_{\scC^0_p([S,T])}
+[Z]_{\scC_p^{1/2}([S,T])}\Big)|t-s|^{1+\eps_2}
\end{equ}
for some $\eps_2>0$. Indeed, this verifies  condition \eqref{eq:SSL-cond2} with $N_2=N\Big(\big\|(1+ [ B^H]_{\cC^{1}|\bF})Z\big\|_{\scC^0_p([S,T])}
+[Z]_{\scC_p^{1/2}([S,T])}\Big)$.

The verification of \eqref{eq:SSL-cond3} carries through from the proof of Lemma \ref{lem:Regularisation} without any serious change: note that the argument only requires $[\tilde X]_{\scC^\gamma_p}<\infty$ for some $\gamma>0$ (which here is satisfied with $\gamma=1$) and $\|\Xi_{s,r}-\tilde X_s\|_{L_p(\Omega)}\lesssim |r-s|^\gamma$ for some $\gamma>0$ (which is also satisfied with $\gamma=1$).
Therefore all the conditions of Lemma \ref{lem:SSL} are satisfied and \eqref{eq:one-big-bound-smooth} follows from \eqref{eq:SSL-conc}.
\end{proof}

\begin{lemma}                              \label{lem:stability-smooth}
Let  $p \geq 1$, $s_0\in[0,1)$, $x_0, y_0 \in\R^d$. Let $b$, $\tilde b$, and $\sigma$ satisfy $\mathbf{A}_{\mathrm{smooth}}$.
Then there exists a constant $N_1$, depending only on  $\alpha, H, d,d_0, M, \lambda, p,$ and $ K$, as well as a constant $\gamma=\gamma(\alpha,H)\in(0,2)$,  with the following property:
 for any  $X$ and $Y$ strong solutions of  \eqref{eq:main} and \eqref{eq:aux}, respectively, and for all $C\geq 1$,  the following estimate holds
\begin{equs}\label{eq:stability-smooth}
\| X_{\cdot \wedge\tau }- Y_{\cdot \wedge\tau }\|_{\scC^{1/2}_p([s_0,1])}  \leq N_1 ^{C^{2-\gamma}} \Big( |x_0-y_0|+  \big( \P ([ B^H ]_{\cC^1|\bF}\geq C ) \big)^{1/2p} +\|b-\tilde b\|_{C^0(\R^d)} \Big).  
\end{equs}
%where $m=(N_2C)^{2-\gamma} $.
\end{lemma}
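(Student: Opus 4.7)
The plan is to follow the proof of Lemma \ref{lem:stability} essentially verbatim, using Lemma \ref{lem:Regularisation-smooth} in place of Lemma \ref{lem:Regularisation} and exploiting the fact that in the smooth regime the stochastic integral is an ordinary Lebesgue integral against $B^{H-1}$. Set $Z=X-Y$ and, for $(U,T)\in[s_0,1]_\leq^2$, decompose $Z_{\cdot\wedge\tau}$ on $[U,T]$ as in \eqref{eq:ready-to-buckle}, namely as the sum of increments of the drift difference $D_t=\int_{s_0}^t(b(X_r)-b(Y_r))\,dr$, the coefficient mismatch $E_t=\int_{s_0}^t(b(Y_r)-\tilde b(Y_r))\,dr$, and the stochastic-integral difference
\begin{equ}
S_t=\int_{s_0}^t\big(\sigma(X_r)-\sigma(Y_r)\big)\,dB^H_r=\int_{s_0}^t\big(\sigma(X_r)-\sigma(Y_r)\big)\,B^{H-1}_r\,dr.
\end{equ}

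The trivial estimate $[E_{\cdot\wedge\tau}]_{\scC^{1/2}_p([U,T])}\leq\|b-\tilde b\|_{\cC^0(\R^d)}$ of \eqref{eq:est-for-E} carries over. For $S$, I would use that $B^{H-1}_0=0$ by construction, which combined with the stopping bound $[B^{H-1}_{\cdot\wedge\tau}]_{\cC^{H^--1}([0,1])}\leq K$ yields $\|B^{H-1}_{\cdot\wedge\tau}\|_{\cC^0([s_0,1])}\lesssim K$. This in turn gives the pointwise bound
\begin{equ}
[S_{\cdot\wedge\tau}]_{\scC^{1/2}_p([U,T])}\lesssim K|T-U|^{1/2}\,\|Z_{\cdot\wedge\tau}\|_{\scC^0_p([U,T])},
\end{equ}
and, unlike in the Young case, no factor of $[B^H]_{\cC^1|\bF}$ enters from the stochastic integral. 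For $D$, Lemma \ref{lem:Regularisation-smooth} applied with $f=b$ produces
\begin{equ}
[D_{\cdot\wedge\tau}]_{\scC^{1/2}_p([U,T])}\lesssim\|Z_{\cdot\wedge\tau}\|_{\scC^0_p}|T-U|^{\eps}+\Big(\big\|(1+[B^H]_{\cC^1|\bF})Z_{\cdot\wedge\tau}\big\|_{\scC^0_p}+[Z_{\cdot\wedge\tau}]_{\scC^{1/2}_p}\Big)|T-U|^{1/2+\eps}.
\end{equ}
The $[B^H]_{\cC^1|\bF}$-factor is handled exactly as in \eqref{eq:idk1}--\eqref{eq:idk2}: split on $\{[B^H]_{\cC^1|\bF}\geq C\}$ and its complement, using $[B^H]_{\cC^1|\bF}\leq C$ on the latter, and on the former combining H\"older's inequality with Fernique's theorem and the almost sure apriori bound $\|Z_{\cdot\wedge\tau}\|_{\cC^0}\lesssim 1$, which follows from $[X_{\cdot\wedge\tau}]_{\cC^1},[Y_{\cdot\wedge\tau}]_{\cC^1}\lesssim 1$ (themselves an immediate consequence of the boundedness of $b$, $\sigma$ and of $B^{H-1}_{\cdot\wedge\tau}$).

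Assembling the three pieces yields a buckling inequality of the same shape as \eqref{eq:buckling}: with some $N_0\lesssim 1$ and $\eps_1>0$,
\begin{equ}
\|Z_{\cdot\wedge\tau}\|_{\scC^{1/2}_p([U,T])}\leq N_0 C\|Z_{U\wedge\tau}\|_{L_p(\Omega)}+N_0 C\|Z_{\cdot\wedge\tau}\|_{\scC^{1/2}_p([U,T])}|T-U|^{1/2+\eps_1}+N_0\big(\P([B^H]_{\cC^1|\bF}\geq C)\big)^{1/2p}+N_0\|b-\tilde b\|_{\cC^0(\R^d)}.
\end{equ}
Choosing $|T-U|^{1/2+\eps_1}\leq(2N_0 C)^{-1}$ makes the inequality buckle; iterating over an equidistant partition of $[s_0,1]$ with $\lesssim C^{2-\gamma}$ subintervals (for $\gamma=\gamma(\alpha,H)\in(0,2)$ determined by $\eps_1$) then produces \eqref{eq:stability-smooth}, exactly as in the argument following \eqref{eq:iterable}. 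The proof is mostly bookkeeping: all the genuinely new analysis already sits in Lemma \ref{lem:Regularisation-smooth}, and the only point to watch is that $\|B^{H-1}_{\cdot\wedge\tau}\|_{\cC^0}$ is controlled deterministically in terms of $K$, which makes the stochastic-integral contribution strictly easier than its Young counterpart.
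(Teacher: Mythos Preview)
Your proof is correct and follows the same overall architecture as the paper's. The one notable difference is in the stochastic integral: you exploit $B^{H-1}_0=0$ together with the stopping to get the deterministic bound $\|B^{H-1}_{\cdot\wedge\tau}\|_{\cC^0}\lesssim K$, which lets you control $[S_{\cdot\wedge\tau}]_{\scC^{1/2}_p}$ directly with no probabilistic splitting. The paper instead bounds the \emph{unstopped} integral $\int_{s_0}^\cdot(\sigma(X_{r\wedge\tau})-\sigma(Y_{r\wedge\tau}))B^{H-1}_r\,dr$ via the unstopped factor $[B^H]_{\cC^1}$, then splits on $\{[B^H]_{\cC^1}\geq C\}$ and invokes a Kolmogorov embedding $\|\cdot\|_{L_p(\Omega;\cC^0)}\lesssim\|\cdot\|_{\scC^{1/4}_p}$ to feed the result back into the $\scC^{1/2}_p$ buckling. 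Your shortcut is cleaner and avoids that detour; both routes land on the same buckling inequality and conclude identically.
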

\begin{proof}
The proof is in large part identical to that of Lemma \ref{lem:stability}.
As therein, taking $(S,T)\in[s_0,1]^2_{\leq}$ and denoting $Z=X-Y$, one gets (after applying Lemma \ref{lem:Regularisation-smooth})
\begin{equs}
\|Z_{\cdot\wedge\tau}\|_{\scC^{1/2}_p([S,T])}&\lesssim C\| Z_{S \wedge \tau} \|_{L_p(\Omega)} +   C \| Z_{\cdot \wedge \tau}\|_{\scC^{1/2}_p([S,T])} |T-S|^{1/2+\eps}
\\
& \qquad + \big( \P ([ B^H ]_{\cC^1|\bF}\geq C ) \big)^{1/2p} + \|b-\tilde b\|_{\cC^0(\R^d)}
\\
&\qquad+\Big[\int_{s_0}^\cdot \big(\sigma (X_{r\wedge \tau})-\sigma(Y_{r\wedge \tau}) \big)B^{H-1}_r\,dr \Big]_{\scC^{1/2}_p([S,T])}.\label{eq:pre-buckle-smooth}
\end{equs}
The only difference lies in the treatment of the last term: for $(s,t)\in[S,T]^2_\leq$ we can write
\begin{equ}
\Big|\int_{s}^t \big(\sigma (X_{r\wedge \tau})-\sigma(Y_{r\wedge \tau}) \big)B^{H-1}_r\,dr\Big| \lesssim|t-s|\|Z_{\cdot\wedge\tau}\|_{\cC^0([S,T])}[B^{H}]_{\cC^1}.
\end{equ}
Choose $p$ large enough so that $ \| \cdot \|_{ L_p\big(\Omega; \cC^{0}([S,T]) \big)} \lesssim \| \cdot \|_{\scC^{1/4}_p([S, T])}$. Taking $L_p(\Omega)$ norms and taking supremum over $(s,t)\in[S,T]^2_\leq$, we get
\begin{equs}
\Big[\int_{s_0}^\cdot \big(\sigma (X_{r\wedge \tau}) & -\sigma(Y_{r\wedge \tau})\big) B^{H-1}_r\,dr \Big]_{\scC^{1/2}_p([S,T])}\lesssim |T-S|^{1/2}\big\|Z_{\cdot\wedge\tau}[B^{H}]_{C^1}\big\|_{ L_p\big(\Omega; \cC^{0}([S,T]) \big)}
\\
&\lesssim C|T-S|^{1/2}\|Z_{\cdot\wedge\tau}\|_{ L_p\big(\Omega; \cC^{0}([S,T]) \big)}+\big( \P ([ B^H ]_{\cC^1}\geq C ) \big)^{1/2p}
\\
&\lesssim C|T-S|^{1/2}\|Z_{\cdot\wedge\tau}\|_{ \scC^{1/4}_p([S,T])}+\big( \P ([ B^H ]_{\cC^1}\geq C ) \big)^{1/2p}
\\
&\lesssim C\|Z_{S\wedge\tau}\|_{L_p(\Omega)}+C|T-S|^{3/4}\|Z_{\cdot\wedge\tau}\|_{\scC^{1/2}_p([S,T])}+\big( \P ([ B^H ]_{\cC^1}\geq C ) \big)^{1/2p}.
\end{equs}
Using this bound in \eqref{eq:pre-buckle-smooth}, we get the exact analogue of \eqref{eq:buckling}. The proof is then concluded in the exact same way.
\end{proof}

\subsection{Strong existence and path-by-path uniqueness}
Once the estimate \eqref{eq:stability-smooth} is available, the rest of the argument from Section \ref{sec:Young} carries through with minimal changes, all of which are in fact simplifications due to the ``stochastic'' integral being easier to handle.
We therefore only state the main conclusions.
\begin{theorem}\label{thm:everythin-smooth}
Let $b$ and $\sigma$ satisfy $\mathbf{A}_{\mathrm{smooth}}$. Then for any $s_0\in[0,1)$ and $x_0\in\R^d$ there exists a strong solution to \eqref{eq:main}.
Moreover, there exists an event $\what\Omega\subset\Omega_H$ of full probability such that for any $\omega\in\what\Omega$, $s_0\in[0,1)$, $x_0\in\R^d$, and any two solutions $Y$ and $Y'$ of \eqref{eq:main}, one has $Y=Y'$.
\end{theorem}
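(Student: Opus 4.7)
The plan is to repeat the three-step strategy of Sections~\ref{sec:Young} and~\ref{sec:stabil-rough} (stability $\Rightarrow$ existence and semiflow $\Rightarrow$ path-by-path uniqueness), using Lemma~\ref{lem:stability-smooth} as the sole analytic input and exploiting at each point the fact that, for $H>1$, the ``stochastic'' integral is nothing but the Lebesgue integral against $B^{H-1}$.

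\emph{Strong existence.} Take $(b^n)_{n\in\N}\subset\cC^1$ with $b^n\to b$ uniformly, and let $X^n$ be the classical solution of \eqref{eq:main} with drift $b^n$. Applying Lemma~\ref{lem:stability-smooth} to the pair $(X^n,X^m)$ with common initial data, for any fixed $K$ and $p$ large enough, I get
\[
\|X^n_{\cdot\wedge\tau_K}-X^m_{\cdot\wedge\tau_K}\|_{\scC^{1/2}_p([s_0,1])}\leq N_1^{C^{2-\gamma}}\big(\big(\P([B^H]_{\cC^1|\bF}\geq C)\big)^{1/2p}+\|b^n-b^m\|_{\cC^0}\big).
\]
Since $\E^s B^{H-1}_t$ is a centred Gaussian process, Fernique's theorem yields a Gaussian tail for $[B^H]_{\cC^1|\bF}$; therefore choosing $C=C(n,m)$ growing slowly to infinity makes the right-hand side vanish as $n,m\to\infty$, so $(X^n_{\cdot\wedge\tau_K})$ is Cauchy in $\scC^{1/2}_p$. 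By Kolmogorov's continuity theorem I get convergence in $L_p(\Omega;\cC^\beta([s_0,1]))$ for some $\beta>0$. The limit $X$ is easily seen to solve \eqref{eq:main}: the drift integral converges by uniform convergence of $b^n(X^n)$, and the stochastic integral, being $\int_{s_0}^t\sigma(X^n_s)B^{H-1}_s\,ds$, converges in $\cC^0$ by the continuity of $\sigma$ together with the a.s.\ boundedness of $B^{H-1}$ on $[0,1]$.

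\emph{Semiflow.} Lemma~\ref{lem:stability-smooth} applied with $\tilde b=b$ combined with the above Fernique bound gives, exactly as in Corollary~\ref{cor:initial-stability}, the estimate
\[
\|X^{s_0,x_0}_{\cdot\wedge\tau_K}-X^{s_0,y_0}_{\cdot\wedge\tau_K}\|_{\scC^{1/2}_p([s_0,1])}\lesssim |x_0-y_0|^\theta
\]
for any $\theta\in(0,1)$ and $p$ sufficiently large. Kolmogorov's continuity criterion then produces, following line by line the proof of Theorem~\ref{thm:exist-Young}, a modification $\what X$ of the random field $(s_0,x_0)\mapsto X^{s_0,x_0}$ indexed by $\Q\cap[0,1)$ and $\R^d$, and an event $\what\Omega\subset\Omega_H$ of full probability on which $\what X$ satisfies properties~(i)--(iii) of Theorem~\ref{thm:exist-Young}. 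The only part worth mentioning is the flow property~(ii): it reduces via the same limiting procedure to its validity for the classical approximating equations (which holds by classical uniqueness) together with joint continuity in initial data, itself a consequence of the $|x_0-y_0|^\theta$ bound above.

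\emph{Path-by-path uniqueness.} Fix $\omega\in\what\Omega$, $s_0\in\Q\cap[0,1)$, $x_0\in\R^d$, and two solutions $Y,Y'$. It suffices to show $Y_t=\what X^{s_0,x_0}_t(\omega)$ for $t\in(s_0,1]\cap\Q$, so I mimic the proof of Theorem~\ref{thm:PBP-uniq-Young}: define $f(s)=\what X^{s,Y_s}_t$ for $s\in[s_0,t]\cap\Q$, and use property~(iii) to get $|f(r)-f(u)|\lesssim|Y_r-\what X^{u,Y_u}_r|^\theta$. Here the argument is strictly simpler than in the Young case because the noise integral is a Lebesgue integral: writing
\[
Y_v-\what X^{u,Y_u}_v=\int_u^v\big(b(Y_s)-b(\what X^{u,Y_u}_s)\big)\,ds+\int_u^v\big(\sigma(Y_s)-\sigma(\what X^{u,Y_u}_s)\big)B^{H-1}_s\,ds,
\]
the boundedness of $b,\sigma,B^{H-1}$ on $[s_0,1]$ gives $|Y_v-\what X^{u,Y_u}_v|\lesssim|v-u|$, which via the $\cC^\alpha$ and Lipschitz regularities of $b$ and $\sigma$ and the trivial estimate $|B^H_v-B^H_u|\lesssim|v-u|$ upgrades to $|Y_v-\what X^{u,Y_u}_v|\lesssim|v-u|^{1+\alpha}$. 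Plugging into $|f(r)-f(u)|\lesssim|r-u|^{\theta(1+\alpha)}$ and choosing $\theta$ close enough to $1$ so that $\theta(1+\alpha)>1$ forces $f$ to be constant, hence $Y_t=\what X^{s_0,x_0}_t(\omega)$.

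The only place any care is needed is the Fernique bound on $[B^H]_{\cC^1|\bF}$, which is standard; everything else is a simplification of the Young-case arguments already in hand.
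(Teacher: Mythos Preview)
Your proposal is correct and follows precisely the approach that the paper sketches: it reruns the Young-case three-step strategy (stability $\Rightarrow$ existence and semiflow $\Rightarrow$ path-by-path uniqueness) with Lemma~\ref{lem:stability-smooth} as the analytic input, noting that each step simplifies because the ``stochastic'' integral is an ordinary Lebesgue integral against the bounded process $B^{H-1}$. The paper itself gives no proof beyond stating exactly this, so your filled-in details match the intended argument.
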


{\appendix

\section{Technical tools}

We  recall a number of basic ingredients for stochastic sewing.
First let us recall the following version of the stochastic sewing lemma.
We use the notation $\overline{[S,T]}_\leq^2=\{(s,t)\in[S,T]^2:\,s-(t-s)\geq S\}$.
\begin{lemma}[Shifted stochastic sewing lemma \cite{Khoa, Mate}]\label{lem:SSL}
Let $0\leq S<T\leq1$, $p\geq 2$ and let $(A_{s, t})_{(s,t)\in\overline{[S,T]}_{\leq}^2}$ be a family of random variables in $L_p(\Omega,\R^d)$ such that $A_{s,t}$ is $\cF_t$-measurable.
Suppose that for some $\eps_1,\eps_2>0$ and $N_1,N_2\geq0$ the bounds
\begin{align}
&\|A_{s,t}\|_{L_p(\Omega)}  \leq N_1|t-s|^{1/2+\eps_1}\,,\label{eq:SSL-cond1}
\\
&\|\E^{s-(t-s)}\delta A_{s,u,t}\|_{L_p(\Omega)}  \leq N_2 |t-s|^{1+\eps_2}\label{eq:SSL-cond2}
\end{align}
hold for all $(s,t)\in\overline{[S,T]}_\leq^2$ and $u=(s+t)/2$.
Suppose furthermore that there exists a  process $\cA=\{\cA_t:t\in[S,T]\}$ and constants $\eps_3>0$, $N_3\geq 0$, such that for any $(s,t)\in\overline{[S,T]}_\leq^2$  one has
\begin{equation}\label{eq:SSL-cond3}
\|\cA_t-\cA_s-A_{s,t}\|_{L_p(\Omega)}\leq N_3|t-s|^{1+\eps_3}.
\end{equation}
Then there exist constants $K_1,K_2>0$, which depend only on $\eps_1,\eps_2$, $p$, and $d$, such that for any $(s,t)\in[S,T]_\leq^2$ one has the bound
\begin{equation}\label{eq:SSL-conc}
	\|\cA_t-\cA_s\|_{L_p(\Omega)}  \leq  K_1N_1 |t-s|^{1/2+\eps_1}+K_2N_2 |t-s|^{1+\eps_2}.
\end{equation}
\end{lemma}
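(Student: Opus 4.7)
My plan is to transfer the three-stage strategy of Section \ref{sec:Young} (stability/strong uniqueness $\to$ strong existence and semiflow $\to$ path-by-path uniqueness) to the smooth regime, taking the stability estimate \eqref{eq:stability-smooth} of Lemma \ref{lem:stability-smooth} as the starting point. Everything simplifies compared with the Young and rough cases because the second integral in \eqref{eq:main} is now an ordinary Lebesgue integral against the bounded process $B^{H-1}$ (bounded on the event $\Omega_H$).

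\textbf{Strong existence and the semiflow.} I would take a sequence $(b^n)\subset\cC^1(\R^d,\R^d)$ converging uniformly to $b$ and let $X^n$ denote the unique classical solution of \eqref{eq:main} with drift $b^n$ (standard, since the driving signal is of bounded variation after differentiation). Applying Lemma \ref{lem:stability-smooth} to $X^n$ and $X^m$ with $\tilde b=b^m$, and invoking Fernique's theorem to control the tails of $[B^H]_{\cC^1|\bF}$ by a Gaussian estimate, shows that $(X^n_{\cdot\wedge\tau_K})_{n\in\N}$ is Cauchy in $\scC^{1/2}_p([s_0,1])$ for every $K$ and every $p\geq2$. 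For $p$ large enough the Kolmogorov-type embedding $\|\cdot\|_{L_p(\Omega;\cC^\beta)}\lesssim\|\cdot\|_{\scC^{1/2}_p}$ produces an adapted limit $X\in L_p(\Omega;\cC^\beta([s_0,1]))$, and pointwise convergence of the two (Lebesgue) integrals identifies $X$ as a strong solution. Applying Lemma \ref{lem:stability-smooth} with $x_0$ versus $y_0$ and choosing $C=\beta_0^{-1/2}\sqrt{\log(1/|x_0-y_0|)}$ to beat the double-exponential factor $N_1^{C^{2-\gamma}}$, exactly as in Corollary \ref{cor:initial-stability}, yields almost-Lipschitz dependence of the solutions on the initial data. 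Kolmogorov's continuity criterion then produces a jointly continuous modification $\what X$, and properties (i)-(ii)-(iii) of Theorem \ref{thm:exist-Young} are derived as therein: (ii) uses pathwise uniqueness of the classical approximating equations combined with the convergence $X^{n;s_0,x_0}_\cdot\to\what X^{s_0,x_0}_\cdot$ uniformly on compact sets of initial data.

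\textbf{Path-by-path uniqueness.} Following the argument of Theorem \ref{thm:PBP-uniq-Young}, I reduce to showing that for fixed $\omega\in\what\Omega$, rational $s_0<t$, and rational $s\in[s_0,t]$, the function $f(s)=\what X^{s,Y_s}_t(\omega)$ is constant. Property (iii) gives $|f(r)-f(u)|\lesssim|Y_r-\what X^{u,Y_u}_r|^\theta$ for any $\theta\in(0,1)$. Both $Y$ and $\what X^{u,Y_u}$ solve \eqref{eq:main} on $[u,r]$ starting at $Y_u$, so their difference at $v\in[u,r]$ is bounded by $\int_u^v|b(Y_s)-b(\what X^{u,Y_u}_s)|\,ds$ plus $\int_u^v|\sigma(Y_s)-\sigma(\what X^{u,Y_u}_s)||B^{H-1}_s|\,ds$. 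Since $B^{H-1}$ is bounded on $\what\Omega$, $\sigma$ is Lipschitz and $b$ is bounded, a first pass gives $|Y_v-\what X^{u,Y_u}_v|\lesssim|v-u|$; substituting this back and using the $\alpha$-Hölder continuity of $b$ together with the Lipschitz continuity of $\sigma$ upgrades the estimate to $|Y_v-\what X^{u,Y_u}_v|\lesssim|v-u|^{1+\alpha}$. Hence $|f(r)-f(u)|\lesssim|r-u|^{\theta(1+\alpha)}$, and choosing $\theta$ sufficiently close to $1$ makes the exponent strictly greater than $1$, which forces $f$ to be constant.

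\textbf{Main obstacle.} I do not anticipate a genuine obstacle beyond bookkeeping: the whole scheme mirrors Section \ref{sec:Young} and is in fact simpler, since the stochastic term is a Lebesgue integral of a bounded process. The only point demanding mild care is ensuring that the conditional seminorm $[B^H]_{\cC^1|\bF}$ (rather than the pathwise $[B^H]_{\cC^1}$) plays the role that $[B^H]_{\cC^{H^-}|\bF}$ played in the Young case when transferring the Fernique-based interpolation argument from Corollary \ref{cor:initial-stability}; this is exactly what Lemma \ref{lem:stability-smooth}, built on the shifted stochastic sewing estimate of Lemma \ref{lem:Regularisation-smooth}, already delivers.
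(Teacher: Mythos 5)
Your proposal does not address the statement at hand. The statement to be proved is Lemma~\ref{lem:SSL}, the \emph{shifted stochastic sewing lemma}: an abstract statement about a two-parameter family $(A_{s,t})$ of $\cF_t$-measurable random variables in $L_p(\Omega,\R^d)$ satisfying the size, coherence, and identification conditions \eqref{eq:SSL-cond1}--\eqref{eq:SSL-cond3}, with the conclusion being the $L_p$ bound \eqref{eq:SSL-conc}. What you wrote instead is a proof sketch of Theorem~\ref{thm:everythin-smooth} (strong existence and path-by-path uniqueness in the smooth regime $H\in(1,\infty)\setminus\N$), imported wholesale from the Young-case pipeline of Section~\ref{sec:Young}.

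These are entirely different objects: Lemma~\ref{lem:SSL} is a general-purpose probabilistic tool (a variant of the stochastic sewing lemma of L\^e, with the conditioning shifted to $s-(t-s)$ rather than $s$) that the paper invokes repeatedly in Lemmas~\ref{lem:Regularisation}, \ref{lem:Regularisation-rough}, and \ref{lem:Regularisation-smooth}; it is stated in the Appendix with citations to \cite{Khoa, Mate} and is not proved anew in the paper. A correct argument for Lemma~\ref{lem:SSL} would proceed, as in those references, by a dyadic telescoping/martingale decomposition of Riemann-type sums $\sum_i A_{t_i,t_{i+1}}$ over a partition, using \eqref{eq:SSL-cond2} together with the conditional BDG inequality to control the martingale differences and \eqref{eq:SSL-cond1} to control the raw size, then passing to the limit via \eqref{eq:SSL-cond3} to identify the sewn object with $\cA_t-\cA_s$. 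None of that machinery appears in your text. You should go back to the actual statement of Lemma~\ref{lem:SSL} and build the argument around the dyadic sewing scheme, not around the PDE/SDE framework of the main theorems.
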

\begin{remark}\label{rem:SSL-triv}
A trivial extension of Lemma \ref{lem:SSL} is that if for some $\delta>0$ one requires \eqref{eq:SSL-cond1}-\eqref{eq:SSL-cond2}-\eqref{eq:SSL-cond3} to hold only for $(s,t)$ such that $|t-s|\leq \delta$, then \eqref{eq:SSL-conc} still holds for all $(s,t)\in[S,T]_{\leq}^2$, but with the constants $K_1,K_2$ also depending on $\delta$.
\end{remark}

\begin{proposition}
Let $f\in \cC^1(\R^d)$, $g\in \cC^2(\R^d)$, and $x_1,x_2,x_3,x_4\in\R^d$. Then one has the bounds
\begin{equs}    
&|f(x_1)-f(x_2)|\leq |x_1-x_2|[f]_{\cC^1}\,,\label{eq:C1}\\
&|g(x_1)-g(x_2)-g(x_3)+g(x_4)|\leq |x_1-x_2-x_3+x_4|[g]_{\cC^1}+|x_1-x_2||x_1-x_3|[g]_{\cC^2}\,.\label{eq:C2}
\end{equs}
\end{proposition}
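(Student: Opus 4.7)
The first inequality \eqref{eq:C1} is immediate: it is simply the definition of the $\cC^1$ seminorm applied via the fundamental theorem of calculus, writing $f(x_1) - f(x_2) = \int_0^1 \nabla f(x_2 + t(x_1-x_2))\cdot(x_1-x_2)\,dt$ and estimating the integrand by $[f]_{\cC^1}|x_1-x_2|$.

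For \eqref{eq:C2} the plan is to reduce to the case of a ``parallelogram'' of evaluation points by introducing the auxiliary point $\bar x := x_2 + x_3 - x_1$, and then to split
\begin{equs}
g(x_1) - g(x_2) - g(x_3) + g(x_4) = \bigl[g(x_1) - g(x_2) - g(x_3) + g(\bar x)\bigr] + \bigl[g(x_4) - g(\bar x)\bigr].
\end{equs}
The second bracket is handled by \eqref{eq:C1}, giving $|g(x_4) - g(\bar x)| \leq [g]_{\cC^1}|x_4 - \bar x| = [g]_{\cC^1}|x_1 - x_2 - x_3 + x_4|$, which is exactly the first term in the claimed bound.

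For the first bracket, the key observation is that the four points $x_1, x_2, x_3, \bar x$ now form a (possibly degenerate) parallelogram, so the mixed second difference is naturally an integral of $\partial_s\partial_t$ of a suitable parametrisation. Concretely, define $h(s,t) := g\bigl(x_1 + s(x_2 - x_1) + t(x_3 - x_1)\bigr)$ so that $h(0,0) = g(x_1)$, $h(1,0) = g(x_2)$, $h(0,1) = g(x_3)$, and $h(1,1) = g(\bar x)$. Then
\begin{equs}
g(x_1) - g(x_2) - g(x_3) + g(\bar x) = h(0,0) - h(1,0) - h(0,1) + h(1,1) = \int_0^1\!\!\int_0^1 \partial_s\partial_t h(s,t)\,ds\,dt,
\end{equs}
and $\partial_s\partial_t h(s,t) = (x_2 - x_1)^\top \nabla^2 g\bigl(x_1 + s(x_2-x_1)+t(x_3-x_1)\bigr)(x_3 - x_1)$, which is bounded in absolute value by $[g]_{\cC^2}|x_1-x_2||x_1-x_3|$. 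Integrating over $[0,1]^2$ gives the second term of the claimed bound and completes the proof. No real obstacle is expected here; the only small subtlety is choosing the correct auxiliary point $\bar x$ so that the two error terms assemble into precisely the asymmetric product $|x_1-x_2||x_1-x_3|$ rather than some other combination.
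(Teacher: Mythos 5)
Your proof is correct, and indeed the paper does not supply a proof of this proposition at all — it is stated in the appendix among ``basic ingredients'' and treated as a standard fact, with proofs offered only for \eqref{eq:sum_partition} and \eqref{eq:quadratic-controled-estimate}. Your argument is the natural one: \eqref{eq:C1} is the fundamental theorem of calculus along a segment, and for \eqref{eq:C2} the decomposition via the auxiliary parallelogram vertex $\bar x = x_2 + x_3 - x_1$ is exactly what makes the two error terms split cleanly into $[g]_{\cC^1}|x_1-x_2-x_3+x_4|$ and $[g]_{\cC^2}|x_1-x_2||x_1-x_3|$; the mixed second difference on a parallelogram is then the double integral of $\partial_s\partial_t h$, bounded by $[g]_{\cC^2}|x_1-x_2||x_1-x_3|$ since the integration domain has unit measure. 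The only point worth making explicit is the choice of which vertex to subtract and re-add: had you chosen $\bar x = x_1 + x_4 - x_2$ (say), you would obtain a bound with $|x_1-x_2||x_2-x_4|$ in the quadratic term, so the specific asymmetric product in the statement dictates $\bar x = x_2 + x_3 - x_1$, as you correctly identified.
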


For the next two heat kernel bounds recall the notations $p_\Gamma$ and $\cP_\Gamma$ from \eqref{eq:HKdef}. Proposition \ref{prop:HK-Gamma} is very standard, Proposition \ref{prop:HK-Gammadiff} is also folklore, in this exact form it is proved in \cite[Proposition 2.7]{DGL}.
\begin{proposition}\label{prop:HK-Gamma}
Let $K>0$ and let $\Gamma\in\R^{d\times d}$ be a positive definite matrix such that $\|\Gamma\|\leq K$. Then for any $-1<\alpha\leq \beta\leq 2$ there exists a constant $N=N(d,\alpha,\beta,K)$ such that for any $f\in \cC^\alpha$ one has the bound
\begin{equ}\label{eq:HKKK}
\|\cP_\Gamma f\|_{\cC^\beta}\leq N|\Gamma^{-1}|^{(\beta-\alpha)/2}\|f\|_{\cC^\alpha}.
\end{equ}
\end{proposition}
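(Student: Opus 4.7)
I would prove the bound by reducing to the standard isotropic heat kernel via a semigroup factorisation. Let $\lambda := 1/\|\Gamma^{-1}\|_{\mathrm{op}}$ denote the smallest eigenvalue of $\Gamma$ (comparable to $1/|\Gamma^{-1}|$ up to a constant depending only on $d$). Write $\Gamma = \lambda I + \Sigma$ with $\Sigma := \Gamma - \lambda I \succeq 0$ and $\|\Sigma\|\le 2K$. Since the density of the sum of two independent Gaussians with covariances $\Gamma_1, \Gamma_2$ is given by the convolution $p_{\Gamma_1+\Gamma_2}=p_{\Gamma_1}\ast p_{\Gamma_2}$ (interpreting $p_\Sigma$ as a probability measure supported on the range of $\Sigma$ if $\Sigma$ is singular), one obtains the operator identity
\begin{equ}
\cP_\Gamma f = \cP_\Sigma\bigl(\cP_{\lambda I} f\bigr).
\end{equ}

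The first factor $\cP_{\lambda I}=P_\lambda$ is the standard heat semigroup at time $\lambda$, and I would invoke the classical Hölder–Besov smoothing estimate
\begin{equ}
\|P_\lambda f\|_{\cC^\beta} \le N\, \lambda^{-(\beta-\alpha)/2}\|f\|_{\cC^\alpha}
\end{equ}
for $-1<\alpha\le\beta\le2$, with $N=N(d,\alpha,\beta)$. For $\alpha\in[0,1]$ this follows by differentiating under the integral using the Gaussian bounds $|\nabla^k p_t(x)|\lesssim t^{-(k+d)/2}e^{-|x|^2/(4t)}$; the cases $\alpha\in(1,2]$ reduce to this by differentiating once, and $\alpha\in(-1,0)$ follows from the Littlewood–Paley characterisation $\cC^\alpha = B^\alpha_{\infty,\infty}$ via the standard heat-kernel description of Besov norms.

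The second factor is handled by proving that $\cP_\Sigma$ is bounded on $\cC^\beta$ uniformly for $\|\Sigma\|\le 2K$. Since $p_\Sigma$ is a probability measure, translation-invariance gives for $\beta\in[0,1]$ the pointwise bounds
\begin{equ}
\|\cP_\Sigma g\|_{\cC^0}\le \|g\|_{\cC^0},\qquad [\cP_\Sigma g]_{\cC^\beta}\le [g]_{\cC^\beta},
\end{equ}
and one differentiates under the convolution for $\beta\in(1,2]$. For $\beta\in(-1,0)$ the same uniform bound follows from the Besov description of $\cC^\beta$, using that convolution with a probability measure commutes with Littlewood–Paley projectors and is an $L^\infty$ contraction. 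Composing the two estimates yields
\begin{equ}
\|\cP_\Gamma f\|_{\cC^\beta} \le N\,\lambda^{-(\beta-\alpha)/2}\|f\|_{\cC^\alpha}\le N'\,|\Gamma^{-1}|^{(\beta-\alpha)/2}\|f\|_{\cC^\alpha},
\end{equ}
which is the claim.

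\textbf{Main obstacle.} The routine pieces are the semigroup factorisation and the Hölder case $\alpha,\beta\in[0,2]$. The real care goes into the negative-regularity range $\alpha\in(-1,0)$, where $f$ is only a distribution and both the isotropic smoothing bound and the uniform boundedness of $\cP_\Sigma$ must be verified through the Littlewood–Paley characterisation of $\cC^\alpha=B^\alpha_{\infty,\infty}$, taking particular care that the bound on $\cP_\Sigma$ does not blow up as $\Sigma$ degenerates (which is essential, since by construction $\Sigma$ is allowed to be singular).
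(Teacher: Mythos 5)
The paper does not actually prove \cref{prop:HK-Gamma}; in the Appendix it is explicitly labelled ``very standard'' and is stated without a reference or argument (unlike the companion \cref{prop:HK-Gammadiff}, which is attributed to \cite{DGL}). So there is no internal proof to compare against.

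Your argument is correct and self-contained, and the factorisation strategy is a clean way to organise it. Writing $\Gamma=\lambda\I+\Sigma$ with $\lambda=1/\|\Gamma^{-1}\|_{\mathrm{op}}$ and using $p_{\Gamma_1+\Gamma_2}=p_{\Gamma_1}\ast p_{\Gamma_2}$ to get $\cP_\Gamma=\cP_\Sigma\cP_{\lambda\I}$ places all of the regularity gain in the isotropic factor, where $\lambda^{-(\beta-\alpha)/2}$ is the textbook heat-kernel smoothing bound, and reduces the anisotropic remainder to a uniform bound on $\cP_\Sigma$. The key point you correctly isolate is that $\cP_\Sigma$ is convolution against a probability measure (possibly singular), hence a contraction on $\cC^\beta$: directly for $\beta\in[0,1]$, by commuting with derivatives for $\beta\in(1,2]$, and by commuting with Littlewood--Paley blocks together with the $L^\infty$ contraction for $\beta\in(-1,0)$. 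This is what makes the bound genuinely uniform in $K$ as $\Sigma$ degenerates. The final passage from $\lambda^{-1}=\|\Gamma^{-1}\|_{\mathrm{op}}$ to the Euclidean norm $|\Gamma^{-1}|$ is legitimate since $\beta\geq\alpha$ makes the exponent nonnegative and $\|\Gamma^{-1}\|_{\mathrm{op}}\leq|\Gamma^{-1}|$. No gaps.
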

\begin{proposition}\label{prop:HK-Gammadiff}
Let $K>0$ and let $\Gamma,\bar\Gamma\in\R^{d\times d}$ be positive definite  matrices such that $K^{-1}\I\preceq\Gamma\bar\Gamma^{-1}\preceq K\I$. Then there exists a constant $N=N(d,K)$ such that for all $x\in\R^d$ one has the bound
\begin{equs}
|p_\Gamma(x)-p_{\bar\Gamma}(x)|&\leq N 
%\big(1-(\det\Sigma\bar\Sigma^{-1})^{1/2}+\|I-\Sigma\bar\Sigma^{-1}\|\big)
|\I-\Gamma\bar\Gamma^{-1}|
\big(p_{\Gamma/2}(x)+p_{\bar\Gamma/2}(x)\big).
\label{eq:different-gamma}
\end{equs}
\end{proposition}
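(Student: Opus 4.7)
The plan is to interpolate between $\Gamma$ and $\bar\Gamma$ and reduce the estimate to a derivative computation of the heat kernel along this interpolation. First I would set $\Gamma_\theta = (1-\theta)\bar\Gamma+\theta\Gamma$ for $\theta\in[0,1]$ and observe that the hypothesis $K^{-1}\I\preceq\Gamma\bar\Gamma^{-1}\preceq K\I$ implies $\Gamma$ and $\bar\Gamma$ are comparable as quadratic forms with constant $K$, hence so is $\Gamma_\theta$: with a constant depending only on $K$, the matrices $\Gamma_\theta^{-1}, \Gamma^{-1}, \bar\Gamma^{-1}$ are mutually comparable in the quadratic-form sense, and the determinants satisfy $\det\Gamma_\theta\asymp\det\Gamma\asymp\det\bar\Gamma$. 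These comparability facts will turn $p_{\Gamma_\theta}$ into something controlled uniformly by $p_\Gamma + p_{\bar\Gamma}$ up to constants.

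Next, I would write $p_\Gamma(x)-p_{\bar\Gamma}(x)=\int_0^1\partial_\theta p_{\Gamma_\theta}(x)\,d\theta$ and compute the derivative using Jacobi's formula $\partial_\theta\log\det\Gamma_\theta=\tr(\Gamma_\theta^{-1}\dot\Gamma_\theta)$ and $\partial_\theta\Gamma_\theta^{-1}=-\Gamma_\theta^{-1}\dot\Gamma_\theta\Gamma_\theta^{-1}$, with $\dot\Gamma_\theta=\Gamma-\bar\Gamma$, obtaining
\begin{equ}
\partial_\theta p_{\Gamma_\theta}(x)=p_{\Gamma_\theta}(x)\Big[-\tfrac12\tr\big(\Gamma_\theta^{-1}(\Gamma-\bar\Gamma)\big)+\tfrac12 x^\top\Gamma_\theta^{-1}(\Gamma-\bar\Gamma)\Gamma_\theta^{-1}x\Big].
\end{equ}
The key algebraic observation is to rewrite $\Gamma-\bar\Gamma=(\Gamma\bar\Gamma^{-1}-\I)\bar\Gamma$, which extracts the factor $|\I-\Gamma\bar\Gamma^{-1}|$ we need. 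Using cyclicity, the trace becomes $\tr\big(\bar\Gamma\Gamma_\theta^{-1}(\Gamma\bar\Gamma^{-1}-\I)\big)$, and since $\bar\Gamma\Gamma_\theta^{-1}=((1-\theta)\I+\theta\,\Gamma\bar\Gamma^{-1})^{-1}$ has operator norm bounded by $K$, this trace is bounded by $Nd|\I-\Gamma\bar\Gamma^{-1}|$.

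The quadratic term is handled similarly: after inserting the factorisation, one bounds $|x^\top\Gamma_\theta^{-1}(\Gamma\bar\Gamma^{-1}-\I)\bar\Gamma\Gamma_\theta^{-1}x|$ by $|\I-\Gamma\bar\Gamma^{-1}|$ times a quadratic form which, by the comparability of $\bar\Gamma$ with $\Gamma_\theta$, is controlled by a constant times $x^\top\Gamma_\theta^{-1}x$. Consequently,
\begin{equ}
\big|\partial_\theta p_{\Gamma_\theta}(x)\big|\leq N|\I-\Gamma\bar\Gamma^{-1}|\,p_{\Gamma_\theta}(x)\big(1+x^\top\Gamma_\theta^{-1}x\big).
\end{equ}

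Finally, I would apply the elementary bound $(1+t)e^{-t/2}\lesssim e^{-ct}$ for $t\geq0$ with some $c<1/2$ to the exponent $t=x^\top\Gamma_\theta^{-1}x$, which absorbs the polynomial factor at the cost of shrinking the Gaussian, yielding
$p_{\Gamma_\theta}(x)(1+x^\top\Gamma_\theta^{-1}x)\lesssim (\det\Gamma_\theta)^{-1/2}\exp(-c\,x^\top\Gamma_\theta^{-1}x)$.
Using once more the comparability between $\Gamma_\theta$ and $\Gamma$ (resp.\ $\bar\Gamma$), this last expression is dominated by a $K$-dependent multiple of $p_{\Gamma/2}(x)+p_{\bar\Gamma/2}(x)$, and integrating in $\theta$ gives the claimed bound. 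The main obstacle is step four: extracting precisely $|\I-\Gamma\bar\Gamma^{-1}|$ (and not an unwanted dimensional factor such as $|\bar\Gamma|\cdot|\bar\Gamma^{-1}|$) from the quadratic form, which is what forces the particular factorisation $\Gamma-\bar\Gamma=(\Gamma\bar\Gamma^{-1}-\I)\bar\Gamma$ together with the identity $\bar\Gamma\Gamma_\theta^{-1}=((1-\theta)\I+\theta\Gamma\bar\Gamma^{-1})^{-1}$.
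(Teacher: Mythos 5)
The paper does not prove this proposition itself: it cites \cite[Proposition~2.7]{DGL}, so there is no internal argument to compare against. Your route---interpolate between the covariances, differentiate the Gaussian density along the interpolation via Jacobi's formula, factor $\Gamma-\bar\Gamma=(\Gamma\bar\Gamma^{-1}-\I)\bar\Gamma$ to extract $|\I-\Gamma\bar\Gamma^{-1}|$, and absorb the polynomial factor---is the natural strategy, and everything up to the intermediate bound $|\partial_\theta p_{\Gamma_\theta}(x)|\leq N|\I-\Gamma\bar\Gamma^{-1}|\,p_{\Gamma_\theta}(x)\big(1+x^\top\Gamma_\theta^{-1}x\big)$ is correct.

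The final absorption step, however, does not go through. First, the statement as printed cannot be literally right, since $p_{\Gamma/2}$ is \emph{more} concentrated than $p_\Gamma$: already in $d=1$ with $\Gamma=3$, $\bar\Gamma=1$ one has $|p_\Gamma(x)-p_{\bar\Gamma}(x)|\sim p_3(x)\propto e^{-x^2/6}$ while $p_{3/2}(x)+p_{1/2}(x)\propto e^{-x^2/3}$ as $x\to\infty$, so no finite $N$ works; the right-hand side should presumably read $p_{2\Gamma}+p_{2\bar\Gamma}$. Second, even after that correction, interpolating linearly in $\Gamma$ (that is, $\Gamma_\theta=(1-\theta)\bar\Gamma+\theta\Gamma$) breaks down: $x^\top\Gamma_\theta^{-1}x$ need not dominate $\min(x^\top\Gamma^{-1}x,x^\top\bar\Gamma^{-1}x)$. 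With $d=2$, $\Gamma=\mathrm{diag}(\delta^{-1},\delta)$, $\bar\Gamma=\mathrm{diag}(\delta,\delta^{-1})$, $\theta=1/2$, and $x$ along the diagonal, one gets $x^\top\Gamma_{1/2}^{-1}x\approx 2\delta|x|^2$ while $x^\top\Gamma^{-1}x=x^\top\bar\Gamma^{-1}x\approx\tfrac12\delta^{-1}|x|^2$, so for small fixed $\delta$ the inequality $p_{\Gamma_{1/2}}(x)(1+x^\top\Gamma_{1/2}^{-1}x)\leq N(K)\big(p_{2\Gamma}(x)+p_{2\bar\Gamma}(x)\big)$ fails as $|x|\to\infty$. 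The remedy is to interpolate the \emph{precision} matrix instead: set $\Lambda_\theta:=\big((1-\theta)\bar\Gamma^{-1}+\theta\Gamma^{-1}\big)^{-1}$. Then $x^\top\Lambda_\theta^{-1}x$ is an honest convex combination of $x^\top\Gamma^{-1}x$ and $x^\top\bar\Gamma^{-1}x$, so after $(1+u)e^{-u/2}\leq 2e^{-u/4}$ the Gaussian factor is the weighted geometric mean $\big(e^{-x^\top\bar\Gamma^{-1}x/4}\big)^{1-\theta}\big(e^{-x^\top\Gamma^{-1}x/4}\big)^{\theta}\leq e^{-x^\top\Gamma^{-1}x/4}+e^{-x^\top\bar\Gamma^{-1}x/4}$; the determinant prefactor is controlled via $\Lambda_\theta^{-1}\preceq(1+K)\bar\Gamma^{-1}$, and your factorisation carries over in the form $\Gamma^{-1}-\bar\Gamma^{-1}=\bar\Gamma^{-1}(\bar\Gamma\Gamma^{-1}-\I)$.
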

\begin{proof}[Proof of \eqref{eq:sum_partition}]
Using the shorthand $f_{s,t}:=f_t-f_s$ for increments, the bound follows from the following telescopic rearrangement:
\begin{equs}
 f_{u_k}-f_{u_0}-f'_{u_0} g_{u_0, u_k}&= \sum_{i=1}^k( f_{u_{i-1} , u_i} -f'_{u_{i-1}}g_{u_{i-1}, u_i}) -f'_{u_0} g_{u_0, u_k} + \sum_{i=1}^kf'_{u_{i-1}}g_{u_{i-1}, u_i}
 \\
 &=\sum_{i=1}^k( f_{u_{i-1} , u_i} -f'_{u_{i-1}}g_{u_{i-1}, u_i})+\sum_{i=1}^{k-1}  f'_{u_{i-1}, u_i}g_{u_i, u_k}.
\end{equs}
\end{proof}
\begin{proof}[Proof of \eqref{eq:quadratic-controled-estimate}]
With the shorthand notation for increments as before, for $v, u \in [s, t]$ we have 
\begin{equs}
|&F(f_u)-F(f_v) - \nabla F(f_v) f'_v g_{v, u}|
\\
&= \Big| \int_0^1 \nabla F(f_v+ \theta f_{v, u} ) f_{v, u}\, d \theta -\nabla F(f_v) f'_v g_{v, u} \Big|
\\
&\leq \Big| \int_0^1 \big( \nabla F(f_v+ \theta f_{v, u} )-\nabla F(f_v) \big) f'_v g_{v, u}\, d \theta\Big|+\|F\|_{\cC^1}[(f,f')]_{\cD^\gamma_g([s,t])}|u-v|^\gamma
\\
&\leq \|F\|_{\cC^{1+\beta}}\|f'\|_{\cC^0}\|g\|_{\cC^\alpha}[f]_{\cC^\alpha}^\beta|v-u|^{\alpha(1+\beta)}+\|F\|_{\cC^1}[(f,f')]_{\cD^\gamma_g([s,t])}|u-v|^\gamma.
\end{equs}
More easily, one has
\begin{equs}
|\nabla F(f_u)f_u'- \nabla F(f_v)f_v'| & \leq \| F\|_{\cC^1}[f']_{\cC^\alpha}|v-u|^\alpha+ \|F\|_{\cC^{1+\beta}}\|f'\|_{\cC^0}[f]_{\cC^\alpha}^\beta|v-u|^{\beta\alpha}. 
\end{equs} 
Combining these two bounds with \eqref{eq:triv-controll-embedding} yields \eqref{eq:quadratic-controled-estimate}.
\end{proof}

}

\bibliographystyle{Martin}
\bibliography{multi-bib}

\begin{thebibliography}{CHLT15}
\expandafter\ifx\csname url\endcsname\relax
  \def\url#1{\texttt{#1}}\fi
\expandafter\ifx\csname urlprefix\endcsname\relax\def\urlprefix{URL }\fi
\expandafter\ifx\csname href\endcsname\relax
  \def\href#1#2{#2}\fi
\expandafter\ifx\csname burlalt\endcsname\relax
  \def\burlalt#1#2{\href{#2}{\texttt{#1}}}\fi

\bibitem[ART21]{ART21}
\textsc{L.~Anzeletti}, \textsc{A.~Rochard}, and \textsc{E.~Tanr\'e}.
\newblock Regularisation by fractional noise for one-dimensional differential
  equations with nonnegative distributional drift.
\newblock \emph{arXiv preprint arXiv:2112.05685} (2021).

\bibitem[BH07]{BH07}
\textsc{F.~Baudoin} and \textsc{M.~Hairer}.
\newblock A version of {H\"o}rmander's theorem for the fractional brownian
  motion.
\newblock \emph{Probability Theory and Related Fields} \textbf{139}, no. 3-4,
  (2007), 373--395.
\newblock
  \burlalt{doi:10.1007/s00440-006-0035-0}{http://dx.doi.org/10.1007/s00440-006-0035-0}.

\bibitem[BH22]{BH22}
\textsc{F.~{Bechtold}} and \textsc{M.~{Hofmanov{\'a}}}.
\newblock {Weak solutions for singular multiplicative SDEs via regularization
  by noise}.
\newblock \emph{arXiv e-prints} (2022).
\newblock \burlalt{arXiv:2203.13745}{http://arxiv.org/abs/2203.13745}.

\bibitem[BNP19]{BNP}
\textsc{D.~Ba{\~{n}}os}, \textsc{T.~Nilssen}, and \textsc{F.~Proske}.
\newblock Strong existence and higher order {F}r{\'{e}}chet differentiability
  of stochastic flows of fractional brownian motion driven {SDEs} with singular
  drift.
\newblock \emph{Journal of Dynamics and Differential Equations} (2019).
\newblock
  \burlalt{doi:10.1007/s10884-019-09789-4}{http://dx.doi.org/10.1007/s10884-019-09789-4}.

\bibitem[CF10]{CF10}
\textsc{T.~Cass} and \textsc{P.~Friz}.
\newblock Densities for rough differential equations under hörmander's
  condition.
\newblock \emph{Annals of Mathematics} \textbf{171}, no.~3, (2010), 2115--2141.
\newblock
  \burlalt{doi:10.4007/annals.2010.171.2115}{http://dx.doi.org/10.4007/annals.2010.171.2115}.

\bibitem[CG16]{CG16}
\textsc{R.~Catellier} and \textsc{M.~Gubinelli}.
\newblock Averaging along irregular curves and regularisation of {ODEs}.
\newblock \emph{Stochastic Processes and their Applications} \textbf{126},
  no.~8, (2016), 2323--2366.
\newblock
  \burlalt{doi:10.1016/j.spa.2016.02.002}{http://dx.doi.org/10.1016/j.spa.2016.02.002}.

\bibitem[CHLT15]{CHLT15}
\textsc{T.~Cass}, \textsc{M.~Hairer}, \textsc{C.~Litterer}, and
  \textsc{S.~Tindel}.
\newblock Smoothness of the density for solutions to gaussian rough
  differential equations.
\newblock \emph{The Annals of Probability} \textbf{43}, no.~1(2015).
\newblock \burlalt{doi:10.1214/13-aop896}{http://dx.doi.org/10.1214/13-aop896}.

\bibitem[CHM18]{CHM}
\textsc{P.-E. {Chaudru de Raynal}}, \textsc{I.~{Honor{\'e}}}, and
  \textsc{S.~{Menozzi}}.
\newblock {Strong regularization by Brownian noise propagating through a weak
  H{\"o}rmander structure}.
\newblock \emph{arXiv e-prints} (2018).
\newblock \burlalt{arXiv:1810.12225}{http://arxiv.org/abs/1810.12225}.

\bibitem[Dav07]{Davie}
\textsc{A.~M. Davie}.
\newblock Uniqueness of solutions of stochastic differential equations.
\newblock \emph{Int. Math. Res. Not. IMRN} , no.~24, (2007), Art. ID rnm124,
  26.
\newblock
  \burlalt{doi:10.1093/imrn/rnm124}{http://dx.doi.org/10.1093/imrn/rnm124}.

\bibitem[DGL21]{DGL}
\textsc{K.~{Dareiotis}}, \textsc{M.~{Gerencs{\'e}r}}, and \textsc{K.~{L{\^e}}}.
\newblock {Quantifying a convergence theorem of Gy{\"o}ngy and Krylov}.
\newblock \emph{arXiv e-prints}  arXiv:2101.12185.
\newblock \burlalt{arXiv:2101.12185}{http://arxiv.org/abs/2101.12185}.

\bibitem[FH20]{Friz-Hairer}
\textsc{P.~K. Friz} and \textsc{M.~Hairer}.
\newblock \emph{A Course on Rough Paths}.
\newblock Springer International Publishing, 2020.

\bibitem[FHL21]{FHL}
\textsc{P.~K. Friz}, \textsc{A.~Hocquet}, and \textsc{K.~L{\^e}}.
\newblock Rough stochastic differential equations.
\newblock \emph{arXiv preprint arXiv:2106.10340} (2021).
\newblock \burlalt{arXiv:2106.10340}{http://arxiv.org/abs/2106.10340}.

\bibitem[Ger19]{Mate19}
\textsc{M.~Gerencs{\'{e}}r}.
\newblock Boundary regularity of stochastic {PDEs}.
\newblock \emph{The Annals of Probability} \textbf{47}, no.~2(2019).
\newblock
  \burlalt{doi:10.1214/18-aop1272}{http://dx.doi.org/10.1214/18-aop1272}.

\bibitem[Ger22]{Mate}
\textsc{M.~Gerencs{\'{e}}r}.
\newblock Regularisation by regular noise.
\newblock \emph{Stochastics and Partial Differential Equations: Analysis and
  Computations} (2022).
\newblock
  \burlalt{doi:10.1007/s40072-022-00242-0}{http://dx.doi.org/10.1007/s40072-022-00242-0}.

\bibitem[GG21]{GGNoiseless}
\textsc{L.~Galeati} and \textsc{M.~Gubinelli}.
\newblock Noiseless regularisation by noise.
\newblock \emph{Revista Matem{\'a}tica Iberoamericana} (2021).

\bibitem[GG22]{Lucio-Mate}
\textsc{L.~Galeati} and \textsc{M.~Gerencs{\'e}r}.
\newblock \emph{In preparation} (2022).

\bibitem[GH20]{GH20}
\textsc{L.~{Galeati}} and \textsc{F.~A. {Harang}}.
\newblock {Regularization of multiplicative SDEs through additive noise}.
\newblock \emph{arXiv e-prints} (2020).
\newblock \burlalt{arXiv:2008.02335}{http://arxiv.org/abs/2008.02335}.

\bibitem[GHM21]{GHM}
\textsc{L.~{Galeati}}, \textsc{F.~A. {Harang}}, and \textsc{A.~{Mayorcas}}.
\newblock {Distribution dependent SDEs driven by additive fractional Brownian
  motion}.
\newblock \emph{arXiv e-prints}  arXiv:2105.14063.
\newblock \burlalt{arXiv:2105.14063}{http://arxiv.org/abs/2105.14063}.

\bibitem[HN07]{HN07}
\textsc{Y.~Hu} and \textsc{D.~Nualart}.
\newblock Differential equations driven by h{\"o}lder continuous functions of
  order greater than 1/2.
\newblock In \textsc{F.~E. Benth}, \textsc{G.~Di~Nunno},
  \textsc{T.~Lindstr{\o}m}, \textsc{B.~{\O}ksendal}, and \textsc{T.~Zhang},
  eds., \emph{Stochastic Analysis and Applications},  399--413. Springer Berlin
  Heidelberg, Berlin, Heidelberg, 2007.

\bibitem[HP21]{HP}
\textsc{F.~A. Harang} and \textsc{N.~Perkowski}.
\newblock C$\infty$- regularization of {ODEs} perturbed by noise.
\newblock \emph{Stochastics and Dynamics} \textbf{21}, no.~08(2021).
\newblock
  \burlalt{doi:10.1142/s0219493721400104}{http://dx.doi.org/10.1142/s0219493721400104}.

\bibitem[L{\^e}20]{Khoa}
\textsc{K.~L{\^e}}.
\newblock A stochastic sewing lemma and applications.
\newblock \emph{Electronic Journal of Probability} \textbf{25}(2020).
\newblock
  \burlalt{doi:doi:10.1214/20-EJP442}{http://dx.doi.org/doi:10.1214/20-EJP442}.

\bibitem[Lyo98]{Lyons}
\textsc{T.~J. Lyons}.
\newblock Differential equations driven by rough signals.
\newblock \emph{Revista Matemática Iberoamericana} \textbf{14}, no.~2, (1998),
  215--310.

\bibitem[MP22]{Toyomu-Nicolas}
\textsc{T.~{Matsuda}} and \textsc{N.~{Perkowski}}.
\newblock {An extension of the stochastic sewing lemma and applications to
  fractional stochastic calculus}.
\newblock \emph{arXiv e-prints} (2022).
\newblock \burlalt{arXiv:2206.01686}{http://arxiv.org/abs/2206.01686}.

\bibitem[NO02]{NO1}
\textsc{D.~Nualart} and \textsc{Y.~Ouknine}.
\newblock Regularization of differential equations by fractional noise.
\newblock \emph{Stochastic Processes and their Applications} \textbf{102},
  no.~1, (2002), 103--116.
\newblock
  \burlalt{doi:10.1016/s0304-4149(02)00155-2}{http://dx.doi.org/10.1016/s0304-4149(02)00155-2}.

\bibitem[NO03]{NO2}
\textsc{D.~Nualart} and \textsc{Y.~Ouknine}.
\newblock Stochastic differential equations with additive fractional noise and
  locally unbounded drift.
\newblock In \emph{Stochastic inequalities and applications}, vol.~56 of
  \emph{Progr. Probab.},  353--365. Birkh\"{a}user, Basel, 2003.

\bibitem[Sha16]{shap}
\textsc{A.~V. Shaposhnikov}.
\newblock Some remarks on {D}avie's uniqueness theorem.
\newblock \emph{Proceedings of the Edinburgh Mathematical Society} \textbf{59},
  no.~4, (2016), 1019--1035.

\bibitem[Ver80]{Veretennikov}
\textsc{A.~J. Veretennikov}.
\newblock Strong solutions and explicit formulas for solutions of stochastic
  integral equations.
\newblock \emph{Mat. Sb. (N.S.)} \textbf{111(153)}, no.~3, (1980), 434--452,
  480.

\bibitem[Zha05]{Zhang}
\textsc{X.~Zhang}.
\newblock Strong solutions of {SDE}s with singular drift and sobolev diffusion
  coefficients.
\newblock \emph{Stochastic Processes and their Applications} \textbf{115},
  no.~11, (2005), 1805--1818.
\newblock
  \burlalt{doi:https://doi.org/10.1016/j.spa.2005.06.003}{http://dx.doi.org/https://doi.org/10.1016/j.spa.2005.06.003}.

\end{thebibliography}
\end{document}